\numberwithin{equation}{section}
\newcommand{\bilBP}[2]{\mathcal{B}_{\E}(#1,#2)}
\newcommand{\matPiBkP}{{\bm\Pi}^{\mathcal{B}}_{\k}}
\newcommand{\matB}{\mathsf{B}}
\newcommand{\bigbasis}{\psi} 
\theoremstyle{definition}
\newtheorem{assumption}{Assumption}
\newenvironment{assump}[2][]
  {\begin{assumption}[#1]}
  {\end{assumption}}
\title{Conforming and nonconforming virtual element methods for elliptic problems}
\author[1]{Andrea Cangiani}
\author[2]{Gianmarco Manzini}
\author[1]{Oliver J. Sutton}
\affil[1]{Department of Mathematics, University of Leicester, University Road, Leicester LE1 7RH, United Kingdom.}
\affil[2]{IMATI-CNR, via Ferrata 1, 27100 Pavia; T5 Group, Theoretical Division, Los Alamos National Laboratory, Los Alamos 87545, New Mexico, USA.}
\date{}
\begin{document}

\maketitle

%

\begin{abstract}
We present in a unified framework new conforming and nonconforming Virtual Element Methods (VEM) for general second order elliptic problems in two and three dimensions.
The differential operator is split into its symmetric and non-symmetric parts and conditions for stability and accuracy on their discrete counterparts are established.
These conditions are shown to lead to optimal $H^1$- and $\LTWO$-error estimates, confirmed by numerical experiments on a set of polygonal meshes. The accuracy of the numerical approximation provided by the two methods is shown to be comparable.
\end{abstract}

\section{Introduction}

The Virtual Element Method (VEM) was introduced in~\cite{BasicsPaper}
as a generalisation of the conforming finite element method (FEM),
offering great flexibility in utilising meshes with (almost) arbitrary
polygonal elements.
Unlike the polygonal finite element method (PFEM)~\cite{PFEM} and other conforming FEM extensions based 
on the Generalised Finite Element~\cite{GFEM} framework such as the CFE method~\cite{hackbusch_sauter_cfe_nm} and the  XFEM~\cite{XFEM}, 
the VEM handles meshes with general shaped elements in a manner that avoids the explicit evaluation of the shape functions.
Indeed, the VEM only requires the knowledge of a polynomial subspace of the local finite element space to provide stable and accurate numerical methods.
This feat is achieved by separating the contributions of the polynomial subspace from that of the remaining non-polynomial \emph{virtual} subspace through the introduction of suitable projection operators which can be computed just using the VEM degrees of freedom. The polynomial \emph{consistency} terms of the bilinear form, responsible for convergence properties of the method, are computed accurately. The remaining terms are only required to ensure the \emph{stability} of the method, and hence they can be   rougly estimated from the degrees of freedom. 
The VEM approach can also be viewed as a variational analogue of the mimetic
finite difference (MFD) method; see~\cite{MFDBook} and the recent
review paper~\cite{MFDReviewJCP}. 
As such, for its analysis we can take
advantage of the standard tools of finite element analysis.

An alternative approach
is to completely relax the inter-element conformity
requirements for the discrete space, so that simple (polynomial)
spaces can be used. For instance, discontinuous Galerkin and weak
Galerkin methods, whereby inter-element continuity is weakly imposed,
are naturally suited to general meshes;
see~\cite{Cangiani-Georgoulis-Houston:2014,Cockburn-Qiu-Solano:2014,Mu-Lin-Wang-YE:2015} and the references therein.

Here we shall stop just short of that, presenting a general VEM
framework which \emph{is} based on relaying on some form of
continuity, in the spirit of~\cite{CrouzeixRaviart}. The framework is
used to introduce  a $C^0$-conforming and a nonconforming VEM.

The original VEM in~\cite{BasicsPaper} is a $C^0$-conforming method for
solving the two-dimensional Poisson equation and the same problem is
considered in~\cite{NonconformingVEM}, where a nonconforming
formulation is presented.
The extension of these methods to general elliptic problems with variable coefficients in two and three dimensions, 
is non trivial.
Diffusion problems with non-constant diffusion tensors in two
dimensions are treated in~\cite{ArbitraryRegularityVEM}, where VEMs
which incorporate inter-element continuity of arbitrary degree are
presented.
A crucial step towards the inclusion of low-order differential terms is provided in~\cite{EquivalentProjectors} with the extension of the original $C^0$-conforming VEM to reaction-diffusion problems with constant coefficients in two and three dimensions.
This approach is extended to the solution of general elliptic problems in two dimensions in~\cite{General}.
Concurrently, the VEM framework has been extended to the solution of
plate-bending problems~\cite{PlateBendingVEM}, linear elasticity
problems in two and three spatial dimensions~\cite{LinearElasticity2D,
  LinearElasticity3D}, the Steklov eigenvalue
problem~\cite{SteklovVEM}, the simulation of discrete fracture
networks~\cite{DiscreteFractureVEM}, and the two-dimensional
streamline formulation of the Stokes problem~\cite{StokesVEM}.

We present here a conforming and a nonconforming VEM for the numerical treatment of general linear elliptic problems with variable coefficients in two and three spatial dimensions.
The accuracy and stability of the two methods is determined  in~\sref{sec:vemFramework} through a unified abstract framework, cf. Assumption~\ref{ass:vem}.
Here the partial differential operator is split into its symmetric and skew-symmetric parts and the VEM polynomial consistency and stability properties are established for each of these components separately, cf. Assumption~\ref{ass:bilinearForms}. 
This approach is quite natural in that, for instance, it is clear that only the symmetric component is needed for the method's stability. Indeed a unique stabilisation for all the terms that contribute to the symmetric part (the diffusion, reaction, and symmetric contribution of the convection term) is introduced.
The stabilisation automatically adjusts
with the relative magnitude of the  (symmetric) terms. 
It also leads  the way to the design of ad hoc stabilisation techniques  for the pre-asymptotically stable solution of convection-dominated problems, such as the classical streamline diffusion method~\cite{Hughes-Brooks:1979}, although this is not considered here.

To deal with non-constant coefficients and the lower-order terms, we take the approach of~\cite{General}, rather than that of~\cite{ArbitraryRegularityVEM}.
A crucial role in the former formulation is played by the $\LTWO$-projector which maps the functions of the virtual element space and their gradients onto polynomials.
In order to have the the $\LTWO$-projection operator computable by using only the degrees of freedom, 
in~\sref{sec:vemSpaces}  we generalise a procedure introduced in~\cite{EquivalentProjectors}, dubbed VEM enhancement, used here in the context of nonconforming VEM for the first time.
In this way  a family of virtual element spaces is defined from which a particularly simple choice can be made, cf. Section~\ref{subsec:imple}.
This approach differs completely from that presented for a non-constant diffusion tensor in~\cite{ArbitraryRegularityVEM}, which required the construction of a bespoke projection operator dependent on the diffusion tensor.
The key advantage of removing this dependence is that lower order terms can be dealt with in an identical manner. 
Furthermore, we are now easily able to analyse the impact on the method of the approximation of the problem's coefficients. Here it is important to stress that such approximation is \emph{only} needed to compute the integrals involved in the polynomial consistency terms of the bilinear form. 
This fact is discussed in~\sref{sec:bilinearForms}, with the conclusion
that the stability and optimal accuracy of the method based on using polynomials of order up to  $k$ are unaffected
by the use of a quadrature scheme to approximate the consistency terms, provided that this is of at least degree $2\k-2$.
We stress that this is \emph{exactly the same requirement of the
  finite element methods}~\cite{Ciarlet}.

The new unified formulation offers some indisputable advantages.
From a theoretical viewpoint, it permits us to analyse in a unified
manner the conforming and nonconforming
VEM following the standard analyses of finite element methods for
elliptic problems. 
The analysis, detailed in \sref{sec:h1ErrorBound},  ultimately leads to optimal order $H^1$- and $\LTWO$-error estimates for both methods under the same regularity assumptions on the mesh and the exact solution.
By contrast, the analysis of conforming VEMs for the same problem in two space dimensions given in~\cite{General} is based on an inf-sup argument relaying on the mesh size being small enough. 
From a practical viewpoint, the implementation of the conforming and
nonconforming VEM is formally the same (see Section~\ref{subsec:imple}).
In fact, the only difference is in the construction of the $\LTWO$
projection operator for the shape functions and their gradients.
Such construction depends on the degrees of freedom, which necessarily
differ for the conforming and nonconforming VEM.
Also, as mentioned above, we foresee that the present unified framework will facilitate the treatment of convection-dominated diffusion problems, as well as the design of VEM for the Stokes system.

The conforming and nonconforming VEMs are assessed in \sref{sec:numerics} solving numerically a
representative convection-reaction-diffusion problem with variable coefficients in two dimensions.
The accuracy of the numerical approximation provided by the two methods is comparable and confirms the optimal convergence rates in the $\LTWO$- and $H^1$- norm established by  the theoretical analysis presented in~\sref{sec:h1ErrorBound}.
Finally, in~\sref{sec:conclusion} we offer our final conclusions.

\section{The Continuous Problem}
\label{sec:contProb}
Consider the boundary value problem
\begin{subequations}
\label{eq:pde}
\begin{align}
  -\nabla\cdot(\diff(\vx)\nabla \u) + \conv(\vx) \cdot \nabla \u + \reac(\vx) \u &= \force(\vx)\phantom{0} \quad\text{in}~\D, \\
  \u &= 0\phantom{f(\vx)}\quad\text{on}~\dD,
\end{align}
\end{subequations}
where $\D \subset \Re^\spacedim$ is a polygonal domain for $\spacedim=2$ and a polyhedral domain for $\spacedim=3$.
We assume that the coefficients $\diff_{i,j}(\vx), \conv_{i}(\vx), \reac(\vx)$ are in $L^{\infty}(\D)$, and $\force \in L^{\infty}(\D)$ is the forcing function.
We further suppose that $\diff(\vx)$ is a full symmetric $\spacedim\times \spacedim$ diffusivity tensor and is strongly elliptic, i.e. there exist $\elipLower,\elipUpper > 0$, independent of $\vec{v}$ and $\vx$, such that
\begin{equation}
	\elipLower \abs{\vec{v}(\vx)}^2 \leq \vec{v}(\vx) \cdot \diff(\vx) \vec{v}(\vx) \leq \elipUpper \abs{\vec{v}(\vx)}^2,
	\label{eq:diffEllipticity}
\end{equation}
for almost every $\vx \in \D$ and for any $\vec{v} \in (H^1_0(\Omega))^{\spacedim}$, where $\abs{\cdot}$ denotes the standard Euclidean norm on $\Re^\spacedim$.
Finally, we suppose that there exists $\reacLower > 0$ such that
\begin{equation}
	\reacSym(\vx) := \reac(\vx) - \frac{1}{2} \nabla \cdot \conv(\vx) \geq \reacLower \ge 0,
	\label{eq:reacBound}
\end{equation}
for almost every $\vx \in \D$, and assume that $\nabla\cdot\conv\in L^{\infty}(\D)$.

The variational form of problem~\eqref{eq:pde} reads: find $\u \in H^1_0(\D)$ such that
\begin{equation}
	(\diff \nabla \u, \nabla \v) + (\conv \cdot \nabla \u, \v) + (\reac \u, \v) = (\force, \v) \quad \quad \forall \v \in H^1_0(\D),
	\label{eq:origVariationalForm}
\end{equation}
with $(\cdot,\cdot)$ denoting the $\LTWO$ inner product on $\D$. 
We  split the bilinear form on the left-hand side of~\eqref{eq:origVariationalForm} into its symmetric and skew-symmetric parts:
\begin{subequations}
  \begin{align}
    \a(\u,\v) &:= (\diff \nabla \u, \nabla \v)+\left(\reacSym \u, \v\right), \label{eq:a:def}\\
    \b(\u,\v) &:= \frac{1}{2} \left[ \left(\conv \cdot \nabla \u, \v\right) - \left(\u, \conv \cdot \nabla \v\right) \right],\label{eq:b:def}
  \end{align}
\end{subequations}
and consider discretising the problem written in the equivalent form: find $\u \in H^1_0(\D)$ such that
\begin{equation}
  A(u,v) :=\,  \a(\u,\v) + \b(\u,\v) = (\force, \v) \quad \quad \forall \v \in H^1_0(\D).
  \label{eq:newVariationalForm}
\end{equation}
Rewriting the variational form in this way would not be necessary for the classical finite element method, but it turns out to be a useful step for the Virtual Element Method in view of ensuring that the discrete framework preserves the information about the symmetric and skew-symmetric parts of the bilinear form.

It is simple to check that the bilinear form $A$ is coercive and bounded, and the variational problem therefore possesses a unique solution by the Lax-Milgram lemma.

\section{The Virtual Element Framework}
\label{sec:vemFramework}

We assume that a Virtual Element Method (VEM) consists of the following fundamental ingredients:
\begin{assump}{A1}
  \label{ass:vem}
  For any fixed $h>0$ and $\k\in\mathbb{N}$, we have:
  \begin{itemize} 
    \item A finite decomposition (mesh) $\{\Th\}$ of the domain
      $\Omega$ into non-overlapping \emph{simple  polygonal/ polyhedral elements} with maximum size $h$. The adjective simple refers to the fact that the boundary of each element in the decomposition must be non-intersecting. Further, the boundary of any element $\E\in\Th$ is made of a uniformly bounded number of interfaces (edges/faces) which are either part of the boundary of $\Omega$ or shared with another element in the decomposition.
  \item A finite dimensional function space $\Vh\subset H^1(\Th)$ where  
\begin{equation}\label{eq:brokenH1}
H^1(\Th):=\{v\in \LTWO(\D)\, :\, v_{|\E}\in H^1(\E), \forall \E\in\Th\}
\end{equation} 
(not necessarily a subspace of $H^1_0(\D)$) to be used as a trial and test space, on which the following Poincar\'e-Friedrichs inequality for piecewise $H^1$ functions holds:
 \begin{equation}\label{eq:PF}
\norm{\vh}_{0,\D}^2\le \PFconst |\vh|_{1,h}^2=: \sum_{\E \in \Th} \norm{\nabla\vh}_{0,\E}^2,
   \end{equation} 
hence $|\cdot|_{1,h}$ is a norm on $\Vh$.
  
	Further, for each element $\E \in \Th$, the space $\VhE
    := \Vh|_{\E}$ must contain the space $\PE{\k}$ of polynomials of
    degree $\k$ on $\E$;
    
  \item A bilinear form $\Ah: \Vh \times \Vh
    \rightarrow \Re$, which may be split over the elements in the
    mesh $\Th$ as
    \begin{equation*}
      \Ah(\uh, \vh) = \sum_{\E \in \Th} \AhE(\uh, \vh),
    \end{equation*}
    for any $\uh, \vh \in \Vh$, where $\AhE$ is a
    bilinear form over the space $\VhE$.
      \item An element $\forceh \in \Vh'$ approximating the forcing term. 
  \end{itemize}
\end{assump}

\begin{remark}
The definition of \emph{simple polygons} and \emph{simple polyhedra}  is general enough to include, for instance, elements with consecutive co-planar edges/faces, such as those typical of locally refined meshes with hanging nodes and non-convex elements.
Later on, in Assumption~(A3) in~\sref{subsec:approximation-properties}, we shall introduce some standard  mesh regularity assumptions which are required for the approximation properties of the virtual element spaces.
\end{remark}

In view of the following analysis, it is useful to extend the definition of the continuous bilinear form $\A$, as well as of its parts $\a$ and $\b$, to the whole of $H^1(\Th)$ as a sum of elemental contributions.
Henceforth, we shall use
\begin{equation*}
 \A(\u,\v) :=\sum_{\E\in\Th} \AE(\u,\v)=\sum_{\E\in\Th} \,  \aE(\u,\v) + \bE(\u,\v)\qquad \forall \u,\v\in H^1(\Th). 
\end{equation*}

We also place a few more restrictions on the nature of the bilinear forms $\AhE$. As with the continuous bilinear form, we write  $\AhE$ as the sum of a symmetric and a skew-symmetric part:
\begin{equation*}
	\AhE(\uh, \vh) = \ahE(\uh, \vh) + \bhE(\uh, \vh),
\end{equation*}
and require that they satisfy the following properties:
\begin{assump}{A2}
	\label{ass:bilinearForms}
	The bilinear forms $\ahE$ and $ \bhE$ are assumed to satisfy the properties of \emph{polynomial consistency} and \emph{stability}, defined as
	\begin{itemize}
		\item \emph{Polynomial consistency}:
			If either $\uh \in \PE{\k}$ or $\vh \in \PE{\k}$, the symmetric and skew-symmetric parts of the local virtual element bilinear form must satisfy
			\begin{align*}
				\ahE(\uh, \vh) &= \intE \diff \Po{\k-1} (\nabla \uh)\cdot \Po{\k-1} (\nabla \vh) \dx+\intE \reacSym \Po{\k} \uh \Po{\k} \vh \dx, \\
				\bhE(\uh, \vh) &= \frac{1}{2}\intE \conv \cdot \left[\Po{\k-1} (\nabla \uh) \Po{\k} \vh - \Po{\k} \uh \Po{\k-1} (\nabla \vh)\right] \dx,
			\end{align*}
                        where the operator $\Po{\ell} : \LTWO(\E) \rightarrow \PE{\ell}$ for $\ell \leq \k$ denotes the $\LTWO(\E)$-orthogonal projection onto the polynomial 
                        space $\PE{\ell}$, and is defined for any function $v\in \LTWO(\E)$ as the unique element $\Po{\ell}v$ of $\PE{\ell}$ such that
			\begin{equation*}
				(\Po{\ell} \v, p)_{\E} = (\v, p)_{\E} \quad \forall p \in \PE{\ell}.
			\end{equation*}
		\item \emph{Stability}:
			There exist positive constants $\diffStabLower, \diffStabUpper$, and $\convContinuityConst$  independent of $h$ and the mesh element $\E$ such that, for all $\vh,\wh \in \VhE$, the symmetric part satisfies
			\begin{equation*}
				\diffStabLower \aE(\vh, \vh) \leq \, 
				\ahE(\vh, \vh) \leq \diffStabUpper \aE(\vh, \vh), 
				\end{equation*}
				and the skew-symmetric part satisfies
				\begin{equation*}
				\bhE(\vh, \vh) = 0 \quad \text{ and } 
				\quad \bhE(\vh, \wh) \leq \convContinuityConst \norm{\vh}_{1,\E} \norm{\wh}_{1,\E}.
			\end{equation*}

	\end{itemize}
\end{assump}

Precise methods of choosing the spaces and bilinear forms of the method are, of course, the focus of most of the remainder of this paper.
However, the simple properties presented above are enough to prove two crucial facts about the behaviour of such a Virtual Element Method: firstly that any such method possesses a unique solution and secondly an abstract Strang-type convergence result, which will be used later on to derive optimal order error bounds in the $H^1$  norm.
These results are encapsulated in the following theorems.

\begin{theorem}[Existence and uniqueness of a virtual element solution]\label{thm:vem-wellposed}
  Under Assumptions~\ref{ass:vem} and \ref{ass:bilinearForms}, the problem: find $\uh \in \Vh$ such that
  \begin{equation}\label{eq:VEMproblem}
	\ah(\uh, \vh) + \bh(\uh, \vh)
    = \langle \forceh, \vh\rangle \quad \forall \vh \in \Vh,
  \end{equation}
  possesses a unique solution.
Here $\langle \cdot, \cdot\rangle$ denotes the duality  pairing between $\Vh$ and its dual $\Vh'$.
\end{theorem}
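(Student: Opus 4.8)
The plan is to prove existence and uniqueness via the Lax--Milgram lemma applied to the discrete bilinear form $\Ah = \ah + \bh$ on the finite-dimensional space $\Vh$ equipped with the norm $|\cdot|_{1,h}$ (which is indeed a norm by the Poincar\'e--Friedrichs inequality~\eqref{eq:PF}). Since $\Vh$ is finite-dimensional, it suffices to establish that $\Ah$ is coercive and bounded; boundedness of the linear functional $\forceh$ is immediate since $\forceh \in \Vh'$. In fact, in finite dimensions coercivity alone guarantees well-posedness, since it forces injectivity of the associated linear operator, and a square injective system is invertible.

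First I would establish coercivity. The key observation is that the skew-symmetric part contributes nothing on the diagonal: by the stability assumption in Assumption~\ref{ass:bilinearForms}, we have $\bhE(\vh,\vh)=0$ for every element $\E$, and hence $\bh(\vh,\vh)=\sum_{\E}\bhE(\vh,\vh)=0$ for all $\vh\in\Vh$. Therefore $\Ah(\vh,\vh)=\ah(\vh,\vh)=\sum_{\E}\ahE(\vh,\vh)$. Applying the lower stability bound $\diffStabLower\,\aE(\vh,\vh)\le\ahE(\vh,\vh)$ elementwise and summing yields $\Ah(\vh,\vh)\ge\diffStabLower\sum_{\E}\aE(\vh,\vh)$. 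It then remains to bound $\sum_{\E}\aE(\vh,\vh)$ from below by $|\vh|_{1,h}^2$. Recalling the definition $\aE(\vh,\vh)=(\diff\nabla\vh,\nabla\vh)_{\E}+(\reacSym\vh,\vh)_{\E}$, the ellipticity bound~\eqref{eq:diffEllipticity} gives $(\diff\nabla\vh,\nabla\vh)_{\E}\ge\elipLower\norm{\nabla\vh}_{0,\E}^2$, and the nonnegativity~\eqref{eq:reacBound} of $\reacSym\ge\reacLower\ge0$ makes the reaction term nonnegative. Summing over elements produces $\Ah(\vh,\vh)\ge\diffStabLower\,\elipLower\,|\vh|_{1,h}^2$, which is the desired coercivity.

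Next I would verify boundedness of $\Ah$. Writing $\Ah=\ah+\bh$, the skew-symmetric part is controlled directly by the continuity estimate $\bhE(\vh,\wh)\le\convContinuityConst\norm{\vh}_{1,\E}\norm{\wh}_{1,\E}$ from the stability assumption; summing over elements and applying the Cauchy--Schwarz inequality for sums bounds $\bh(\vh,\wh)$ by a constant times $|\vh|_{1,h}\,|\wh|_{1,h}$ (using the Poincar\'e--Friedrichs inequality to pass between $\norm{\cdot}_{1,\E}$ and the $|\cdot|_{1,h}$ seminorm). For the symmetric part, the upper stability bound $\ahE(\vh,\vh)\le\diffStabUpper\,\aE(\vh,\vh)$ together with continuity of $\aE$ (which follows from the $L^\infty$ bounds on $\diff$ and $\reacSym$) controls $\ah$; applying the Cauchy--Schwarz inequality in the symmetric form $\ah$, valid since $\ah(\vh,\vh)\ge0$, gives $\ah(\vh,\wh)\le\ah(\vh,\vh)^{1/2}\ah(\wh,\wh)^{1/2}$, which is in turn bounded by $|\vh|_{1,h}\,|\wh|_{1,h}$.

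The main subtlety, rather than any deep obstacle, is that the stability bounds are stated only on the diagonal for the symmetric part (that is, in terms of $\ahE(\vh,\vh)$ and $\aE(\vh,\vh)$), so to obtain boundedness of the off-diagonal form $\ah(\vh,\wh)$ I must route the argument through the symmetric Cauchy--Schwarz inequality rather than bounding $\ahE(\vh,\wh)$ directly. One should also take care that the constants $\diffStabLower,\diffStabUpper,\convContinuityConst$ are uniform in $h$ and $\E$, as guaranteed by Assumption~\ref{ass:bilinearForms}, so that the resulting coercivity and continuity constants for $\Ah$ are genuinely mesh-independent; this uniformity is what will later be needed for the Strang-type error analysis, though for mere well-posedness on a fixed mesh even non-uniform constants would suffice.
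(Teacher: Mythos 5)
Your proposal is correct and follows essentially the same route as the paper's own proof: coercivity of $\ah$ from the lower stability bound combined with the ellipticity of $\diff$, the sign condition on $\reacSym$ and the Poincar\'e--Friedrichs inequality, continuity of the symmetric part via the Cauchy--Schwarz inequality for the (semi-)inner product $\ah$ together with the upper stability bound, the skew-symmetric part vanishing on the diagonal while being continuous by assumption, and a conclusion by Lax--Milgram. The only departures are cosmetic: you work with the broken seminorm rather than the full norm and note the finite-dimensional shortcut that coercivity alone already yields invertibility.
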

\begin{proof}
The stability condition on $\ahE$ ensures that this
bilinear form inherits the coercivity of its counterpart
$\aE(\cdot,\cdot)$, hence
\begin{equation}\label{eq:local-coerc}
  \ahE(\vh,\vh)
  \geq \diffStabLower 
    \big(\elipLower \norm{\nabla\vh}_{0,\E}^2+\reacLower\norm{\vh}_{0,\E}^2\big).
\end{equation}
Summing up the contribution of all elements and using~\eqref{eq:PF} we deduce the coercivity bound
\begin{equation}\label{eq:VEMcoerc}
  \ah(\vh,\vh)
  \geq \frac{\diffStabLower}{1+C_{{\rm PF}}} 
    \big(\elipLower |\vh|_{1,h}^2+(\elipLower+\reacLower)\norm{\vh}_{\D}^2\big)\geq \frac{\diffStabLower}{1+C_{{\rm PF}}} \elipLower \norm{\vh}_{1,h}^2.
\end{equation}
From inequality~\eqref{eq:local-coerc} and the symmetry and bilinearity of
$\ahE$, it follows that $\ahE$ is an inner
product on $\VhE$.
Hence, we can apply the Cauchy-Schwartz inequality and use the
  right stability inequality of Assumption~A2 to prove the continuity
  of $\ahE$:
\begin{align*}
  \ahE(\uh,\vh) 
  &\leq (\ahE(\uh,\uh))^\frac{1}{2}(\ahE(\vh,\vh))^\frac{1}{2} \leq \diffStabUpper (\aE(\uh,\uh))^\frac{1}{2}(\aE(\vh,\vh))^\frac{1}{2} \\
  &\leq \diffStabUpper\max\{\elipUpper,\norm{\reacSym}_{\infty}\} \norm{\uh}_{1,\E} \norm{\vh}_{1,\E},
\end{align*}
and the continuity of $\ah$ easily follows.

The stability property for $\bhE$ means that this term
does not feature in the coercivity analysis and imposes continuity with constant $\convContinuityConst$.	
Hence we may conclude that the problem~\eqref{eq:VEMproblem} admits a unique solution by
the Lax-Milgram lemma.
\end{proof}

\begin{theorem}[Abstract a priori error bound]
	\label{thm:vemStrang}
	Under Assumptions~\ref{ass:vem} and~\ref{ass:bilinearForms}, there exists a constant $C>0$ depending only on the coercivity and the continuity constants such that
	\begin{align}
          \norm{\u - \uh}_{1,h} \leq 
          &C \left( \inf_{\vh \in \Vh} \norm{\u - \vh}_{1,h} + \sup_{\substack{\wh \in \Vh \\ \wh \neq 0}} \frac{\abs{(\forceh, \wh) - (\force, \wh)}}{\norm{\wh}_{1,h}} + \right. \nonumber\\
          & + \inf_{p \in \P{\Th}{\k}} \left[\norm{\u - p}_{1, h} + \sum_{\E \in \Th} \sup_{\substack{\wh \in \VhE \\ \wh \neq 0}} \frac{ \abs{\AE(p, \wh) - \AhE(p, \wh)}}{\norm{\wh}_{1,\E}}\right] + \nonumber\\
          &\left. + \sup_{\substack{\wh \in \Vh \\ \wh \neq 0}} \frac{\abs{
\A(\u, \wh)
- \langle\force, \wh\rangle}}{\norm{\wh}_{1,h}} \right),
          \label{eq:vemStrang}
	\end{align}
	where 
	\begin{equation*}
		\P{\Th}{\k} := \{ p \in \LTWO(\D) : p_{|_\E} \in \PE{\k} \quad \forall \E \in \Th \}.
	\end{equation*}
        The last term in the right-hand side of the above error estimate measures the nonconformity error, i.e. it is non-zero only when $\Vh$ is a non-conforming virtual space.
\end{theorem}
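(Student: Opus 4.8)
The plan is to run a Strang-type argument adapted to the possibly nonconforming setting with inexact bilinear forms. First I would fix an arbitrary $\vh\in\Vh$ and an arbitrary $p\in\P{\Th}{\k}$, introduce the discrete error $\delta_h:=\uh-\vh\in\Vh$, and use the triangle inequality
\begin{equation*}
\norm{\u-\uh}_{1,h}\le \norm{\u-\vh}_{1,h}+\norm{\delta_h}_{1,h}
\end{equation*}
to reduce the whole estimate to a bound on $\norm{\delta_h}_{1,h}$. Since $\bhE(\delta_h,\delta_h)=0$ by the stability hypothesis of Assumption~\ref{ass:bilinearForms}, the coercivity bound~\eqref{eq:VEMcoerc} gives $\Ah(\delta_h,\delta_h)=\ah(\delta_h,\delta_h)\ge c_*\norm{\delta_h}_{1,h}^2$ for some $c_*>0$, so it suffices to control $\Ah(\delta_h,\delta_h)$ from above.

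The heart of the argument is an algebraic decomposition of $\Ah(\delta_h,\delta_h)$. Using the discrete problem~\eqref{eq:VEMproblem} to replace $\Ah(\uh,\delta_h)$ by $\langle\forceh,\delta_h\rangle$, and then inserting in turn the piecewise polynomial $p$, the exact solution $\u$ and the continuous forcing $\langle\force,\delta_h\rangle$, I would write the identity
\begin{align*}
\Ah(\delta_h,\delta_h)&=\langle\forceh,\delta_h\rangle-\Ah(\vh,\delta_h)\\
&=\underbrace{\big[\langle\forceh,\delta_h\rangle-\langle\force,\delta_h\rangle\big]}_{\text{(I) forcing}}
-\underbrace{\big[\A(\u,\delta_h)-\langle\force,\delta_h\rangle\big]}_{\text{(II) nonconformity}}\\
&\quad+\underbrace{\A(\u-p,\delta_h)-\Ah(\vh-p,\delta_h)}_{\text{(III) continuity}}
-\underbrace{\sum_{\E\in\Th}\big[\AhE(p,\delta_h)-\AE(p,\delta_h)\big]}_{\text{(IV) consistency}}.
\end{align*}
This identity is meaningful because each elemental contribution $\AhE(p,\delta_h)$ is well defined: although $p$ need not belong to $\Vh$ globally, $p_{|\E}\in\PE{\k}\subset\VhE$ by Assumption~\ref{ass:vem}, so $p$ can be inserted element by element into the sum defining $\Ah$.

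Each group is then estimated by a factor $\norm{\delta_h}_{1,h}$. Term (I) is bounded by the forcing supremum of~\eqref{eq:vemStrang}, and term (II) by the nonconformity supremum. Term (IV) is bounded by $\big(\sum_{\E}\sup_{\wh}\abs{\AE(p,\wh)-\AhE(p,\wh)}/\norm{\wh}_{1,\E}\big)\norm{\delta_h}_{1,h}$, using $\norm{\delta_h}_{1,\E}\le\norm{\delta_h}_{1,h}$ on each element. Term (III) is handled by the continuity of $\A$ on $H^1(\Th)$ (which follows from the boundedness of the coefficients) together with the continuity of $\Ah$ established in the proof of Theorem~\ref{thm:vem-wellposed} and the continuity of $\bhE$, followed by $\norm{\vh-p}_{1,h}\le\norm{\u-\vh}_{1,h}+\norm{\u-p}_{1,h}$. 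Cancelling one factor $\norm{\delta_h}_{1,h}$, absorbing all constants into a single $C$ depending only on the coercivity and continuity constants, and finally taking the infimum over $\vh$ and over $p$ independently yields~\eqref{eq:vemStrang}.

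The step requiring the most care is term (II): for a conforming method $\delta_h\in H^1_0(\D)$, so $\A(\u,\delta_h)=\langle\force,\delta_h\rangle$ by~\eqref{eq:newVariationalForm} and (II) vanishes identically; the whole point of retaining it is that for a nonconforming space one has $\delta_h\notin H^1_0(\D)$, the continuous equation can no longer be tested against $\delta_h$, and this residual genuinely survives as the nonconformity contribution. The only other delicate point is the bookkeeping in (III)--(IV), which rests on the fact that the bilinear forms split as sums of elemental contributions, so that the globally discontinuous $p$ may nonetheless be inserted locally on each $\E$.
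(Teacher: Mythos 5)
Your proposal is correct and follows essentially the same route as the paper: coercivity of $\Ah$ applied to $\delta_h=\uh-\vh$, the discrete equation to replace $\Ah(\uh,\delta_h)$ by $\langle\forceh,\delta_h\rangle$, then insertion of $p$, $\u$ and $\force$ to produce exactly the paper's four groups (forcing, nonconformity, continuity, consistency), each bounded via the continuity and stability properties of Assumption~\ref{ass:bilinearForms} before cancelling a factor of $\norm{\delta_h}_{1,h}$ and taking infima. Your additional remarks — that $\AhE(p,\cdot)$ is well defined because $p_{|\E}\in\PE{\k}\subset\VhE$, and that the nonconformity term vanishes in the conforming case — are accurate and consistent with the paper's treatment.
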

\begin{proof}
	Let $\vh$ be an arbitrary element of $\Vh$ and let $\wh = \uh - \vh \in \Vh$. Then, the coercivity of $\Ah$ implies that
	\begin{align*}
		\coerc \norm{\uh - \vh}_{1,h}^2 &\leq \Ah(\uh - \vh, \wh)
                        = \langle\forceh, \wh\rangle + \sum_{\E \in \Th} \left(\AhE(p - \vh, \wh) - \AhE(p, \wh)\right)\\
  &= \langle\forceh, \wh\rangle + \sum_{\E \in \Th} \left[\AhE(p - \vh , \wh) - \AE(p, \wh) 
            + \left(\AE(p, \wh) - \AhE(p, \wh)\right)\right] \\
            &= \langle\forceh, \wh\rangle - \A(u, \wh) + \sum_{\E \in \Th} \left[\AhE(p - \vh, \wh) + \AE(u - p, \wh) \right. \\
            &
            \quad\left. + \big( \AE(p, \wh) - \AhE(p, \wh) \big) \right],
          \end{align*}
for any $p \in \P{\Th}{\k}$.
We express the potential nonconformity of the virtual element space $\Vh$ as
	\begin{equation*}
		\A(u, \wh) = \langle\force, \wh\rangle + \big( \A(u, \wh) - (\force, \wh) \big),
	\end{equation*}
	so that
	\begin{align*}
          \coerc \norm{\uh - \vh}_{1,h}^2 \leq 
          & \big[\langle\forceh, \wh\rangle - (\force, \wh)\big] + 
           \left[(\force, \wh) - 
\A(u, \wh)\right]\\
          & +\sum_{\E \in \Th} \big[\AhE(p - \vh, \wh) + \AE(u - p, \wh)
            + \big( \AE(p, \wh) - \AhE(p, \wh) \big) \big].
	\end{align*}
Hence, for all $\wh\in\Vh\setminus\{ 0\}$ and $p \in \P{\Th}{\k}$, applying the continuity of the bilinear forms, multiplying and dividing by
    $\norm{\wh}_{1,h}$, and using the triangle inequality we find that
	\begin{align*}
		\coerc \norm{\uh - \vh}_{1,h} &\leq  \frac{\abs{\langle\forceh, \wh\rangle - (\force, \wh)}}{\norm{\wh}_{1,h}} + 
                                               \frac{\abs{(\force, \wh) - 
\A(u, \wh)
                                                 }}{\norm{\wh}_{1,h}}\\
					& \quad +\norm{u-\vh}_{1,h}+ 2\norm{u-p}_{1,h} + \sum_{\E \in \Th} \frac{\abs{\AE(p, \wh) - \AhE(p, \wh)}}{\norm{\wh}_{1,\E}}.
	\end{align*}
The result now follows 
by the triangle inequality.
\end{proof}

\section{The Virtual Element Spaces}
\label{sec:vemSpaces}

We introduce two types of spaces in two- and three-dimensions implementing the framework of Section~\ref{sec:vemFramework}: a conforming and a nonconforming virtual element space.
With deliberate ambiguity we refer to both spaces as $\Vh$ to emphasise the fact that the method is otherwise the same in either case.
In all cases the local virtual element space $\VhE$ 
must contain the space $\PE{\k}$ of polynomials of degree up to $\k$ on $\E$, cf. Assumption~\ref{ass:vem}. 
The complement of $\PE{\k}$ in $\VhE$ is made up of functions which are deemed expensive to evaluate, although they may be described through a set of (known) degrees of freedom.
Central to the virtual element methodology is the idea of \emph{computability}, defined as follows:
\begin{definition}
	\label{def:computable}
	A term will be called \emph{computable} if it may be evaluated using just the degrees of freedom of a function and the polynomial component of the virtual element space.
\end{definition}

We require that the polynomial consistency forms of Assumption~\ref{ass:bilinearForms} are computable. Hence the spaces must be constructed in such a way that the projections
\begin{equation}
	\label{eq:projectionsToCompute}
	\Po{\k-1} \nabla \vh \quad \text{and} \quad \Po{\k} \vh,
\end{equation}
are computable for any function $\vh \in \VhE$.
While the first of these is computable for functions in the original conforming (for $\spacedim = 2$) and nonconforming virtual element spaces presented in~\cite{BasicsPaper, NonconformingVEM}, the second is not.
However, a modification to the conforming space was introduced in~\cite{EquivalentProjectors} which renders $\Po{\k}\vh$ computable without changing the degrees of freedom used to describe the space.
For the construction of the spaces which follow, we consider a generalisation of the process presented in~\cite{EquivalentProjectors}.

We first introduce an appropriately scaled basis for $\PE{\k}$, $\k\in\mathbb{N}$.
Denote by $\MEstar{\ell}$, $\ell\in\mathbb{N}$, the set of \emph{scaled monomials}
\begin{equation*}
	\MEstar{\ell} := \left\{\left(\frac{\vx - \vx_{\E}}{h_{\E}}\right)^{\boldsymbol{s}}, \abs{\boldsymbol{s}} = \ell \right\} ,
\end{equation*}
where $\boldsymbol{s}$ is a multi-index with $\abs{\boldsymbol{s}} := s_1 + s_2$ and $\vx^{\boldsymbol{s}} := x_1^{s_1}x_2^{s_2}$.
Further, we define
$\ME{\k} := \bigcup_{l \leq \k} \MEstar{l}=:\{\m_\alpha\}_{\alpha=1}^{N_{\spacedim,\k}},$
 a basis of $\PE{\k}$, where $N_{\spacedim,\k} := \dim(\P{\Re^\spacedim}{\k})$.

Below,  $\s$ denotes a $\spacedim-1$ dimensional mesh interface (either an edge when $\spacedim = 2$ or a face when $\spacedim = 3$) of the mesh element $\E$, and the set of all mesh interfaces in $\Th$ will be denoted by $\Edges$.
This set is divided into the set of boundary edges $\BoundaryEdges := \{ \s \in \Edges : \s \subset \dD \}$ and internal edges $\InternalEdges := \Edges \setminus \BoundaryEdges$.
Bases for polynomial spaces defined on a interface $\s$ can be similarly constructed; the same notation will be used. 
We denote by $\nuE$ the number of interfaces $\s\in\dE$.

\subsection{The Local Spaces}

The construction of the local nonconforming virtual element space is formally the same for $\spacedim=2$ or 3, while the construction of the conforming space is hierarchical in the space dimension.
Because of this, while we simultaneously provide here a definition of the nonconforming space for $\spacedim = 2$ or 3, we initially only consider the conforming space for $\spacedim = 2$, postponing the construction for $\spacedim = 3$ until the end of the section.

We first introduce the two sets of degrees of freedom 
used in~\cite{BasicsPaper,NonconformingVEM} to describe the original local virtual element spaces.
The final spaces presented later in the section will be described using exactly the same degrees of freedom, so we record them here.
\begin{definition}
	\label{def:originalDofs}
	The degrees of freedom for the local conforming and nonconforming spaces are
	\begin{enumerate}[$(a)$]
		\item for the conforming space
			\begin{itemize}
				\item the value of $\vh$ at each vertex of $\E$;
				\item for $\k > 1$, the moments of $\vh$ of up to order $\k-2$ on each mesh interface $\s\subset\dE$
				\begin{equation*}
				\frac{1}{\abs{\s}} \ints \vh \ma \ds \quad \forall m_{\alpha} \in \Ms{\k-2};
				\end{equation*}
			\end{itemize}
		for the nonconforming space, the moments of $\vh$ of up to order $\k-1$ on each mesh interface $\s\subset\dE$
			\begin{equation*}
				\frac{1}{\abs{\s}} \ints \vh \ma \ds \quad \forall \ma \in \Ms{\k-1};
			\end{equation*}
		\item for $\k > 1$, the moments of $\vh$ of up to order $\k-2$ inside the element $\E$
			\begin{equation*}
				\frac{1}{\abs{\E}} \intE \vh \ma \dx \quad \forall \ma \in \ME{\k-2}.
			\end{equation*}
	\end{enumerate}
\end{definition}

A counting argument shows that the cardinality of the above sets of degrees of freedom is  $\NE = \nuE + \nuE N_{1,k-2} + N_{2,k-2}$ for the conforming case and $\NE = \nuE N_{\spacedim - 1, \k-1} + N_{\spacedim, \k}$ for the nonconforming case. The degrees of freedom for a hexagonal element are represented in Figure~\ref{fig:dofs:hexa:nonconf-VEM} and Figure~\ref{fig:dofs:hexa:conf-VEM} for the nonconforming and conforming case, respectively. The nonconforming degrees of freedom for a cubic element are shown in Figure~\ref{fig:dofs:cube:nonconf-VEM}.

We first consider a conforming and a nonconforming superspace in which both terms in~\eqref{eq:projectionsToCompute} are computable.
Again with deliberate ambiguity, we refer to the enlarged space in each case as $\biglocalspace$.
For the enlarged nonconforming space on the element $\E$, we define
\begin{equation*}
	\biglocalspace := \left\{ \vh \in H^1(\E) : \Delta \vh \in \PE{\k} \text{ and } \frac{\partial \vh}{\partial \n} \in \P{\s}{\k-1} \,\, \forall \s \subset \dE \right\}.
\end{equation*}
For the conforming space, we first introduce the boundary space
\begin{equation*}
	B_{\k}(\dE) := \left\{ \v \in C^0(\dE) : \v|_\s \in \P{\s}{\k} \text{ for each interface } \s \text{ of } \dE \right\},
\end{equation*}
and define $\biglocalspace$ as
\begin{equation*}
	\biglocalspace := \left\{ \vh \in H^1(\D) : \Delta \vh \in \PE{\k} \text{ and } \vh|_{\dE} \in B_{\k}(\dE)\right\}.
\end{equation*}
It is clear that, in either case, we have $\PE{\k}\subset \biglocalspace$. Further, 
the space $\biglocalspace$ may be described using the combination of the  degrees of freedom of Definition~\ref{def:originalDofs} and the \emph{extra} degrees of freedom.

\begin{figure}[!t]
  \centering
  \begin{tabular}{cccc}
    \includegraphics[width=0.2\textwidth]{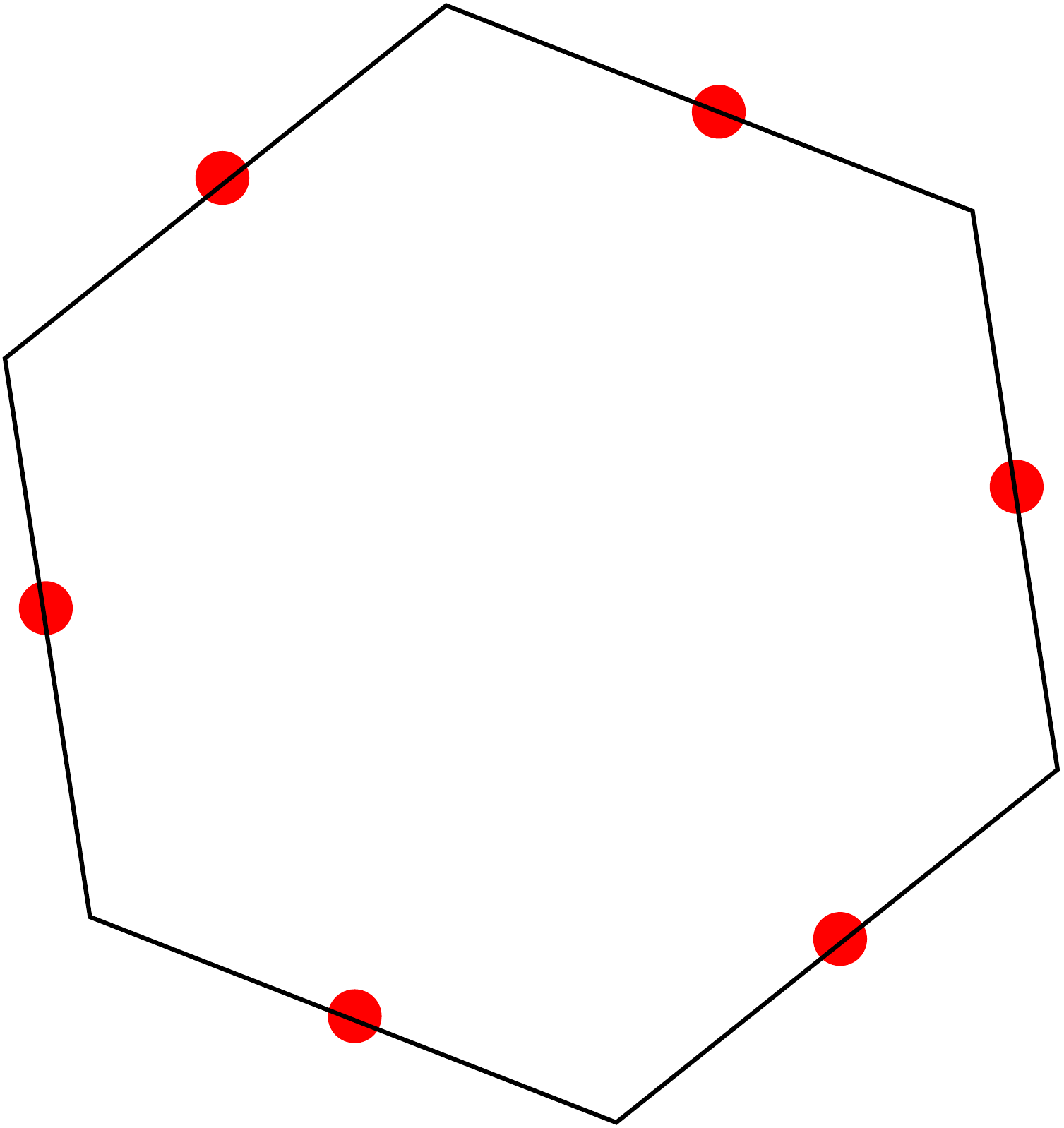} &\quad
    \includegraphics[width=0.2\textwidth]{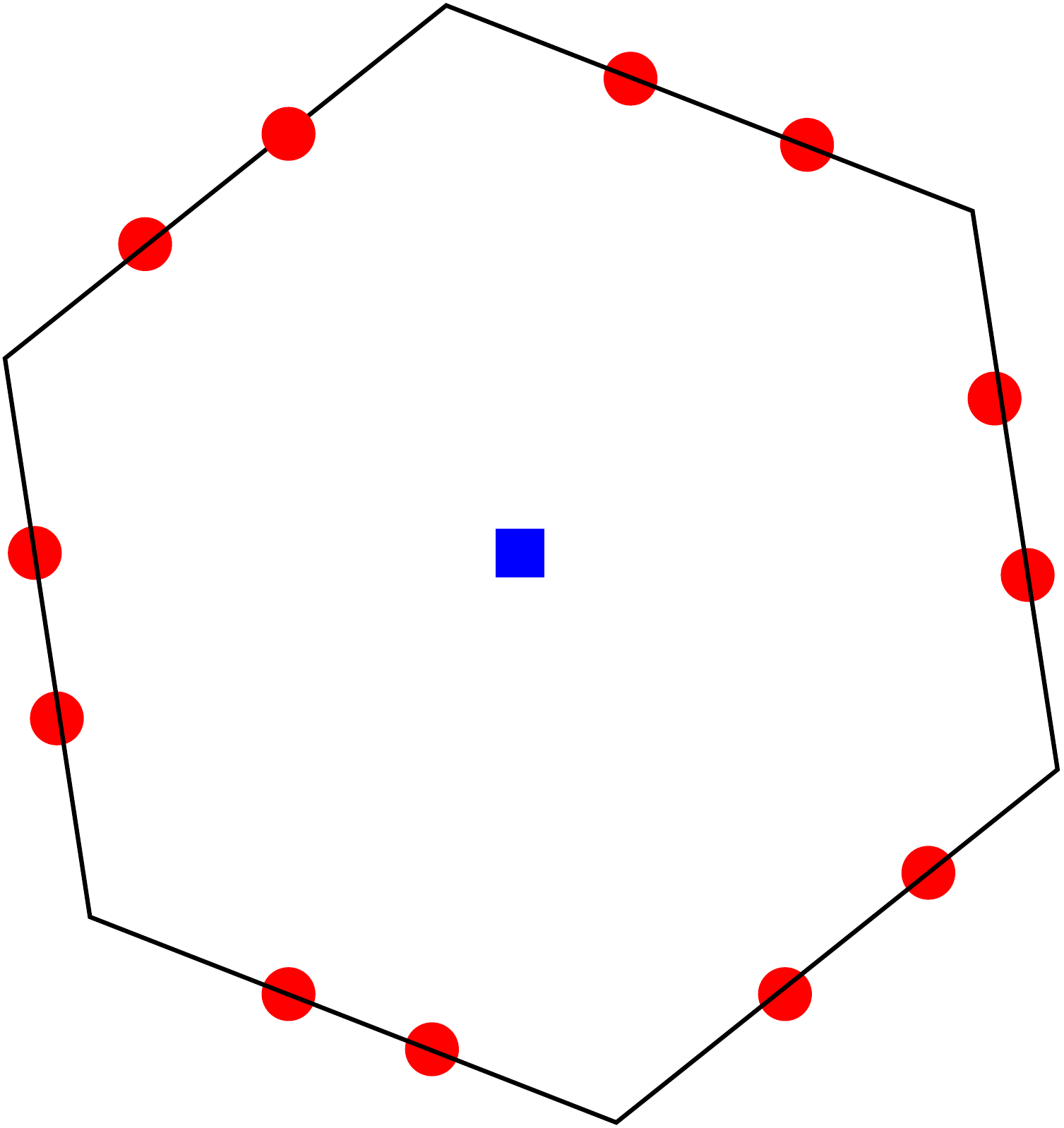} &\quad
    \includegraphics[width=0.2\textwidth]{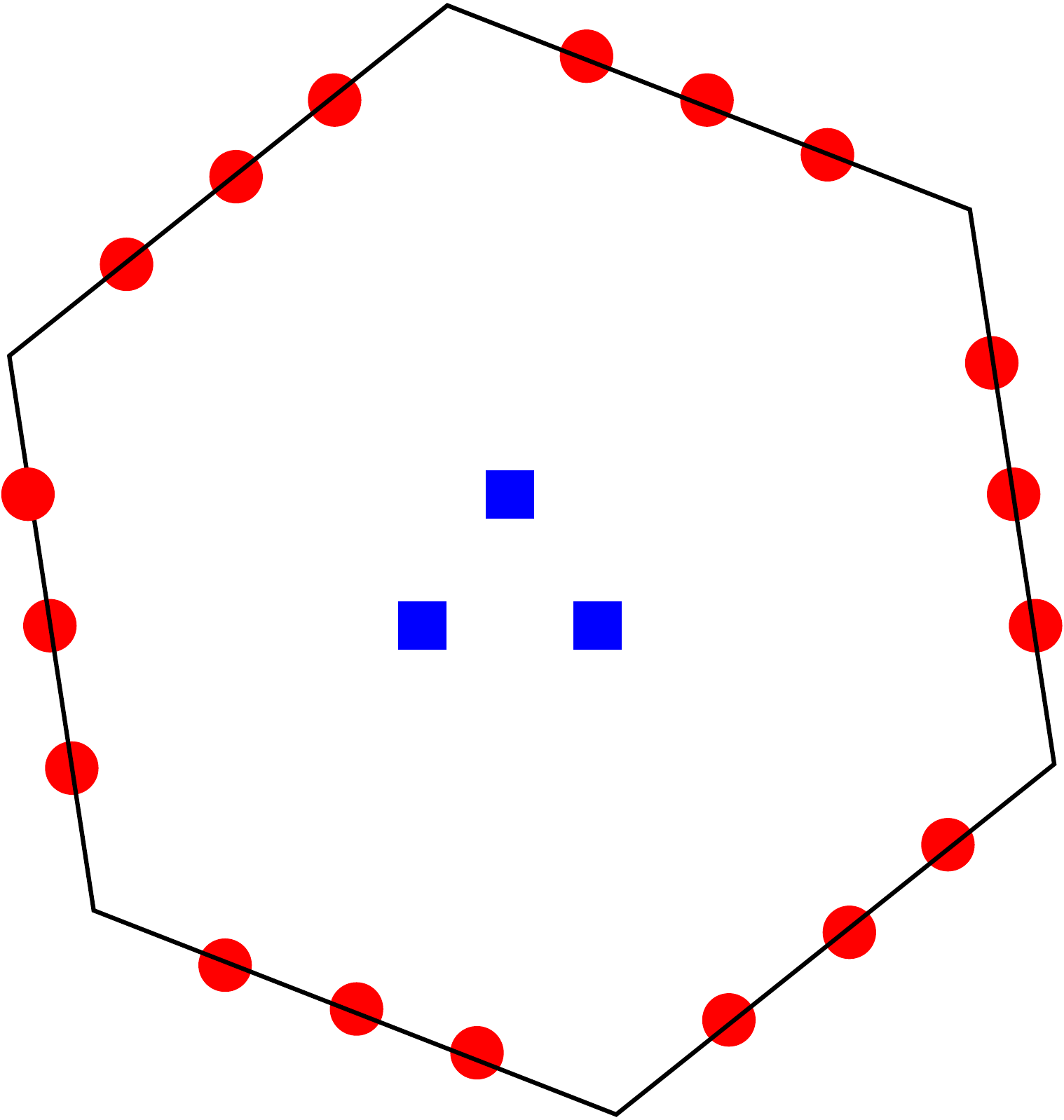} &\quad
    \includegraphics[width=0.2\textwidth]{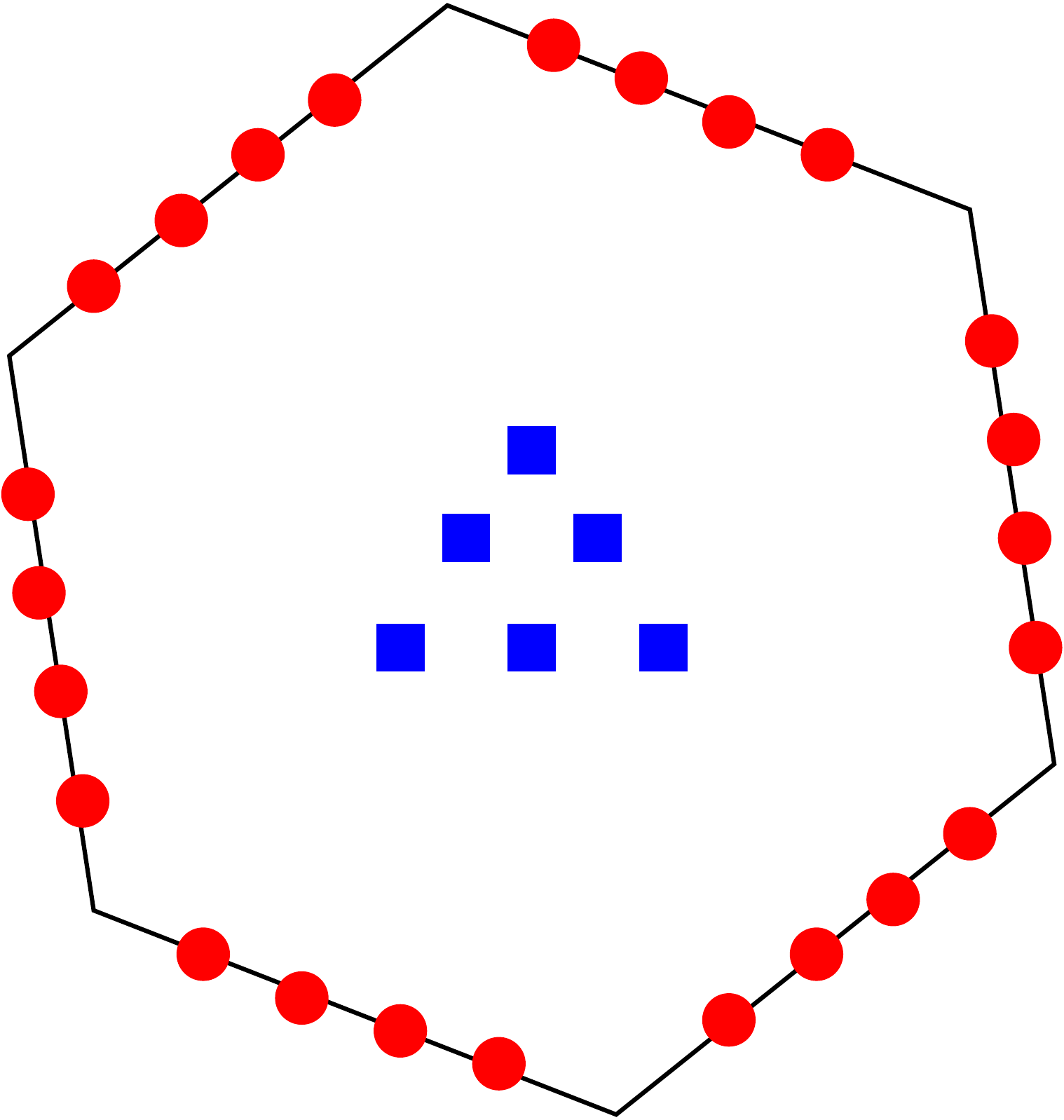} \\
    $\mathbf{k=1}$ & $\mathbf{k=2}$ & $\mathbf{k=3}$ & $\mathbf{k=4}$
  \end{tabular}
  \caption{Degrees of freedom of the non-conforming VEM for a
    hexagonal mesh element for $k=1,2,3,4$; edge moments are marked by a
    circle; internal moments are marked by a square.}
  \label{fig:dofs:hexa:nonconf-VEM}
\end{figure}
\begin{figure}[!t]
  \centering
  \begin{tabular}{cccc}
    \includegraphics[width=0.2\textwidth]{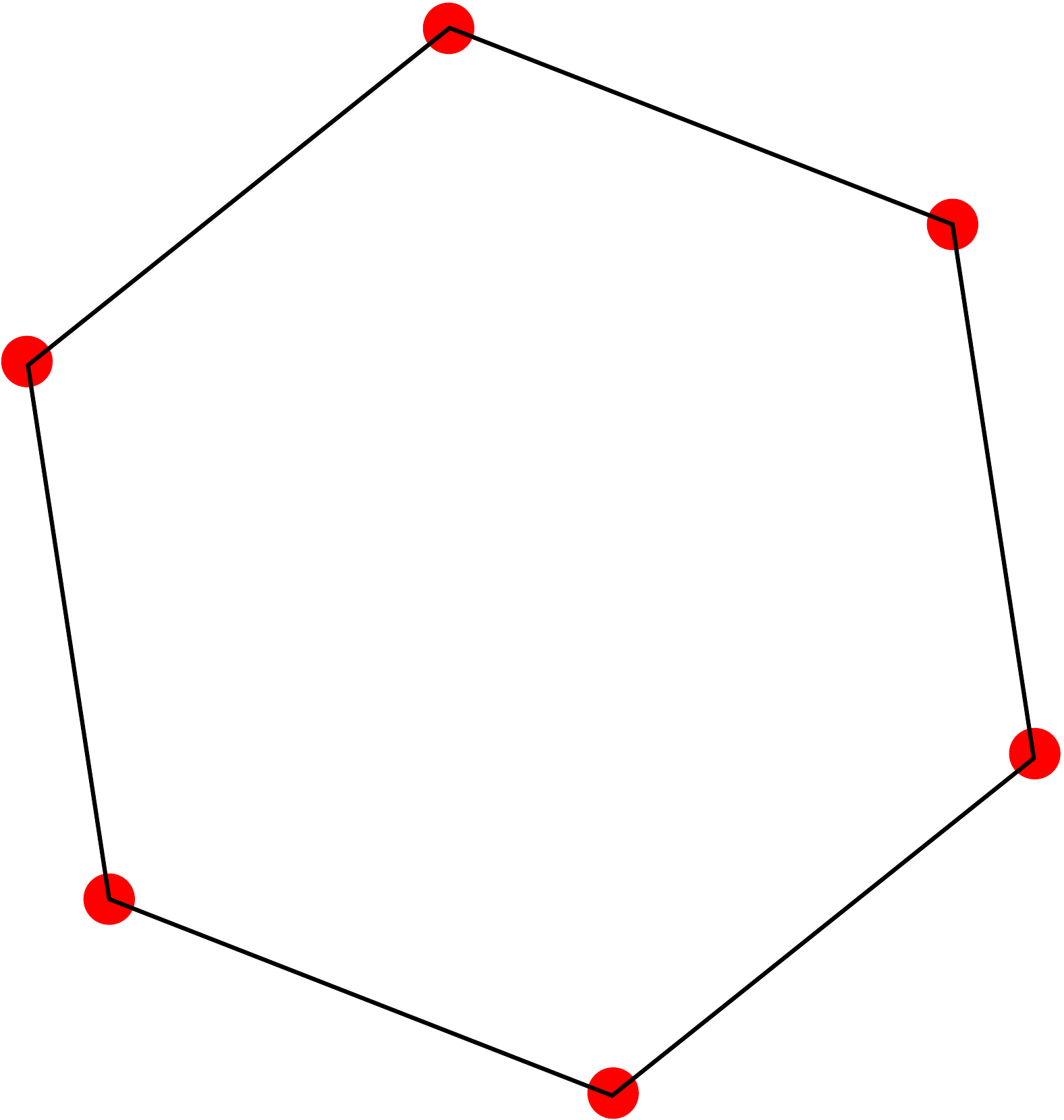} &\quad
    \includegraphics[width=0.2\textwidth]{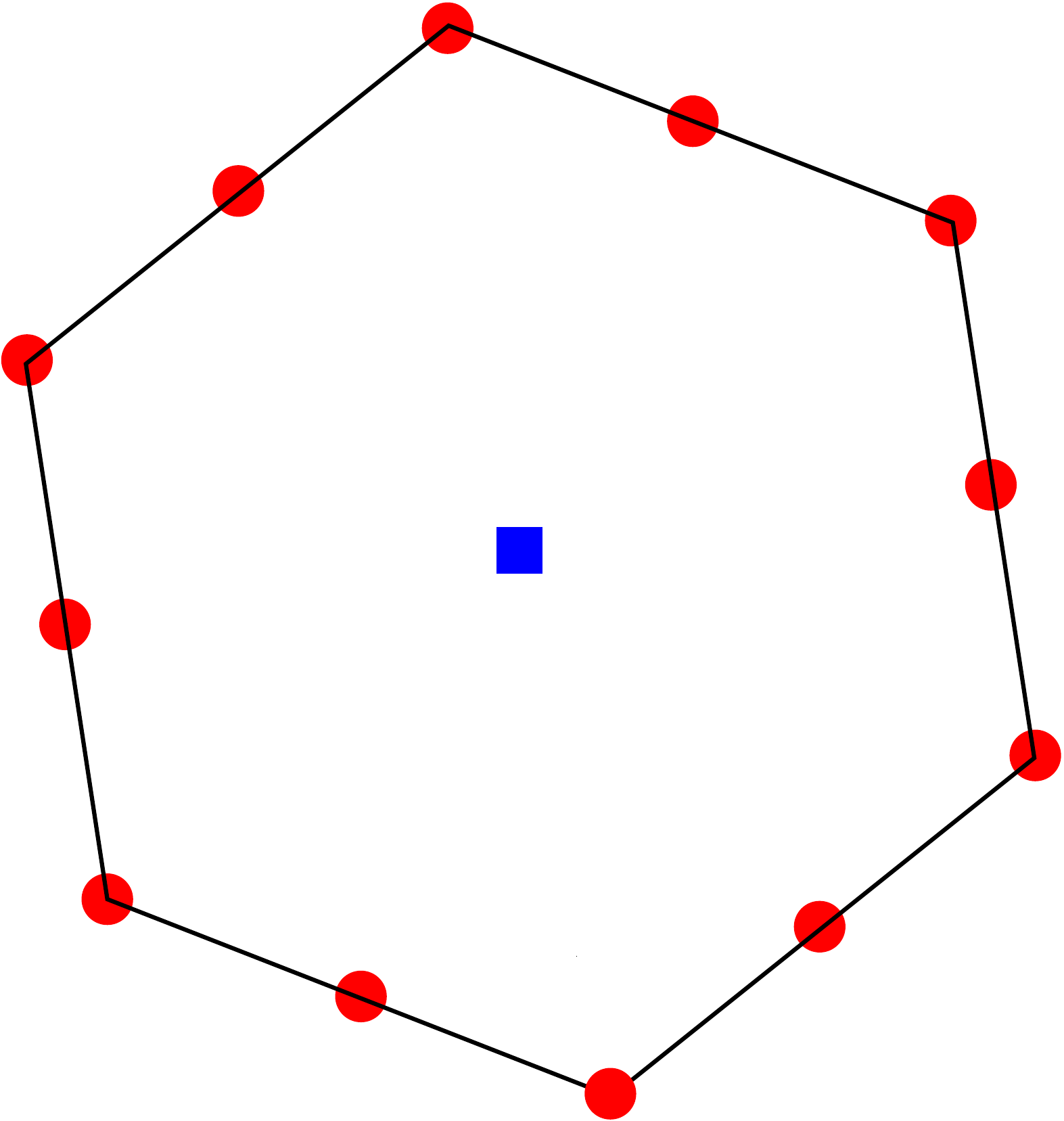} &\quad
    \includegraphics[width=0.2\textwidth]{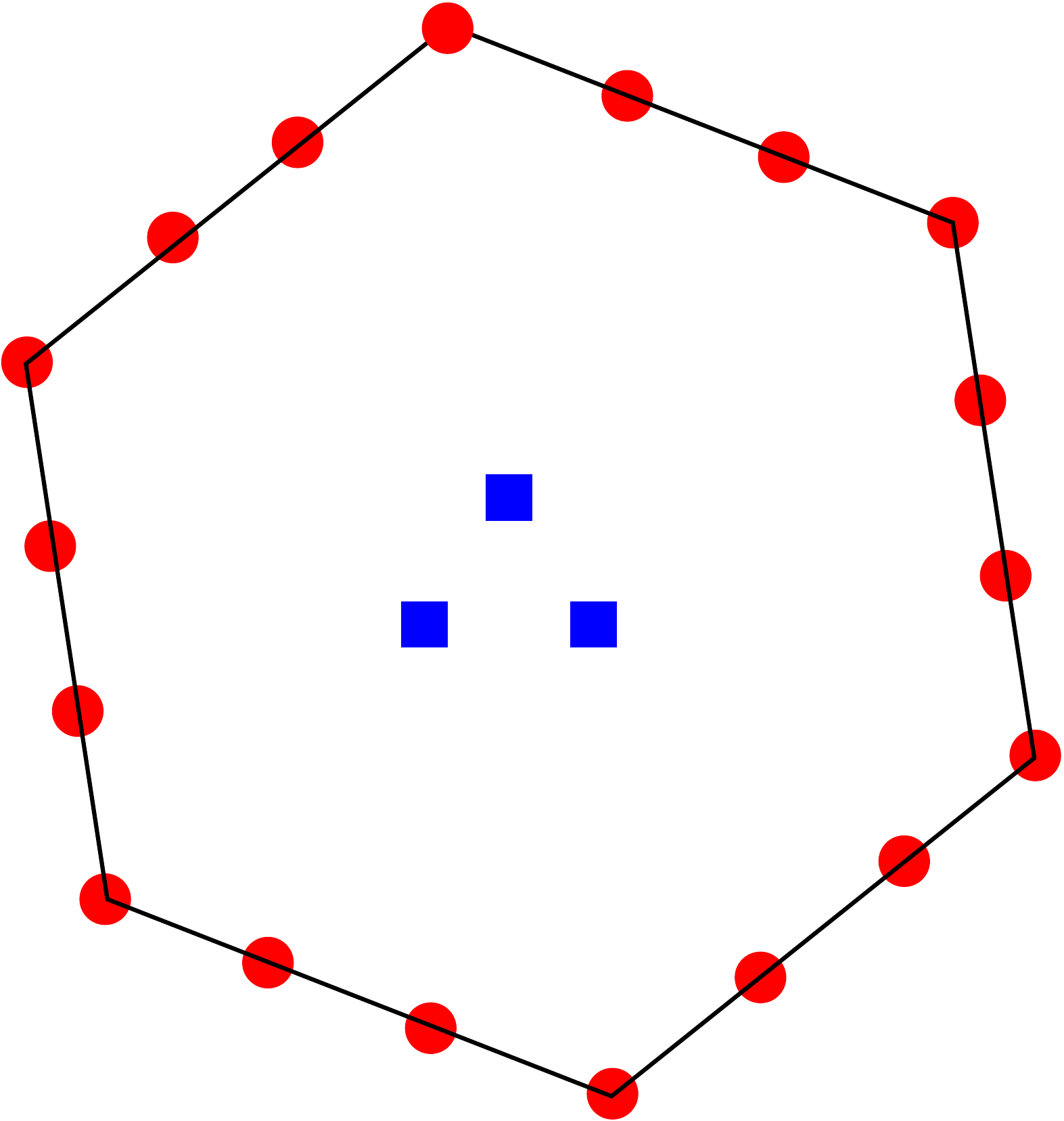} &\quad
    \includegraphics[width=0.2\textwidth]{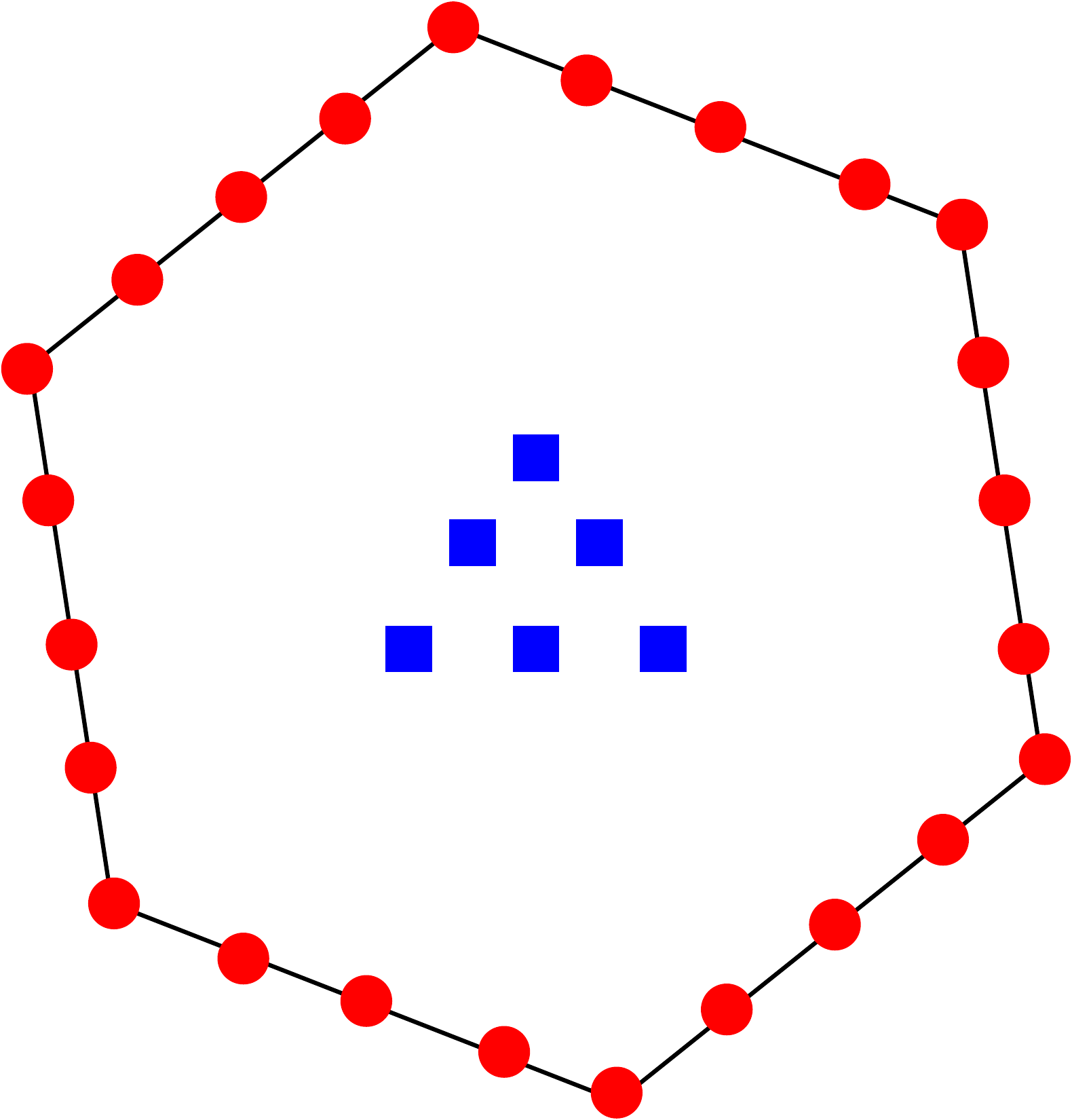} \\
    $\mathbf{k=1}$ & $\mathbf{k=2}$ & $\mathbf{k=3}$ & $\mathbf{k=4}$
  \end{tabular}
  \caption{Degrees of freedom of the conforming VEM for a hexagonal
    mesh element for $k=1,2,3,4$; vertex values and edge moments are marked by
    a circle; internal moments are marked by a square.}
  \label{fig:dofs:hexa:conf-VEM}
\end{figure}

\begin{definition}
	\label{def:extraDofs}
	The \emph{extra} degrees of freedom are taken as the moments of $\vh$ of order $\k$ and $\k-1$ inside the element $\E$
	\begin{equation*}
		\frac{1}{\abs{\E}} \intE \vh \ma \dx \quad \forall \ma \in \MEstar{\k} \cup \MEstar{\k-1}.
	\end{equation*}
\end{definition}

The proof that the combined set of degrees of freedom is unisolvent for the conforming space is given in~\cite{EquivalentProjectors}, and for the nonconforming space follows exactly as the original unisolvence proof in~\cite{NonconformingVEM}. It is based on the observation that each space defines its elements as those functions which solve a particular class of Poisson problem, with piecewise polynomial Dirichlet and Neumann boundary conditions in the conforming and nonconforming case, respectively, specified by the degrees of freedom. 
Similarly, we can easily prove the crucial fact that any $\pk \in \PE{\k}$ is uniquely determined by the original degrees of freedom of Definition~\ref{def:originalDofs}, cf.~\cite{BasicsPaper, NonconformingVEM}. Indeed,  if $\pk\in\PE{\k}$ and all the original degrees of freedom of $\pk$ are zero then, 
\begin{equation*}
(\nabla\pk,\nabla\pk)_\E=(-\Delta \pk\, \pk)_\E+
\intdE  \frac{\partial \pk}{\partial \n}\,\pk \ds=0.
\end{equation*}
The first term is zero as $\Delta\pk=0$ if $k=1$ and  as $\Delta\pk\in\PE{k-2}$ if $k>1$ and hence the first term is a linear combination of the (zero) internal degrees of freedom of $\pk$ in this case. The second term is also zero because it is \emph{always} a linear combination of the boundary degrees of freedom of $\pk$. 
Hence, $\pk=~~$constant in $\E$. The fact that $\pk\equiv 0$ now follows from the hypothesis that $\pk$ is zero at any vertex in the conforming case, and $\int_s \pk\ds=0$ for any $s\in\dE$ in the nonconforming case. 

Using collectively the degrees of freedom of Definitions~\ref{def:originalDofs} and~\ref{def:extraDofs}, it is possible to compute both of the projections in~\eqref{eq:projectionsToCompute} for any $\vh \in \biglocalspace$.
Calculating $\Po{\k} \vh$ requires solving the variational problem: find $\Po{\k}\vh \in \PE{\k}$ such that
\begin{equation}
	\label{eq:computingPovh}
	(\Po{\k} \vh, \ma)_{\E} = (\vh, \ma)_{\E} \qquad \forall \ma \in \ME{\k},
\end{equation}
which is computable as the quantities on the right-hand side are the internal degrees of freedom of $\vh$.
Similarly, computing $\Po{\k-1} \nabla \vh$ requires finding the polynomial $\Po{\k-1}\nabla \vh \in \left(\PE{\k}\right)^2$ such that
\begin{align}
  (\Po{\k-1} \nabla \vh, \bm{\ma})_{\E} = (\nabla \vh, \bm{\ma})_{\E} \qquad \forall \bm{\ma} \in (\ME{\k-1})^2.
  \label{eq:computingProjGradvh}
\end{align}
This is possible because the right-hand side is a sum of boundary and internal degrees of freedom of $\vh$:
\begin{align*}
  (\nabla \vh, \bm{\ma})_{\E}
  = \int_{\dE} \n \cdot \bm{\ma} \vh \ds - (\vh, \nabla \cdot \bm{\ma})_{\E}.
\end{align*}
Note that this time only the original degrees of freedom of Definition~\ref{def:originalDofs} are required.

However, in each case this enlarged space requires an extra 
${\rm card}(\MEstar{\k}) + {\rm card}(\MEstar{\k-1})$ degrees of freedom (namely the extra internal moments in \dref{def:extraDofs}) compared with the original spaces introduced in~\cite{BasicsPaper} and~\cite{NonconformingVEM}, which are described by the degrees of freedom of Definition~\ref{def:originalDofs} only.
To reduce the number of degrees of freedom, we adopt a generalisation of the procedure introduced in~\cite{EquivalentProjectors}, producing a family of different subspaces of $\biglocalspace$ spanned by the original sets of degrees of freedom of Definition~\ref{def:originalDofs}, yet in which we can still compute the required projections in~\eqref{eq:projectionsToCompute}. The procedure consists of the following three steps.

\begin{itemize}
\item[1)]
We introduce an equivalence relation $\sim$ on $\biglocalspace$, defining $\vh \sim \wh$ if all of the original degrees of freedom of $\vh$ and $\wh$ are equal, and consider the quotient space $\biglocalspace /_\sim$ which, by construction, is spanned by the original degrees of freedom of Definition~\ref{def:originalDofs}.

\item[2)] Since $\PE{\k} \subset \biglocalspace$ and any $\pk \in \PE{\k}$ is uniquely determined by the original degrees of freedom, 
we may conclude that any equivalence class $[\vh]$ contains at most one polynomial.
Then, 
we may unambiguously associate $\PE{\k}$ with the resulting `polynomial' subspace of $\biglocalspace /_{\sim}$.
Hence, we can introduce \emph{any} projection operator $\spaceProj{\k} : \biglocalspace /_{\sim} \rightarrow \PE{\k} \subset \biglocalspace /_{\sim}$ which associates a polynomial to each equivalence class $[\vh]$.\footnote{Alternatively, one may think of this as associating a polynomial to each combination of the original degrees of freedom in \dref{def:originalDofs}, independent of the extra degrees of freedom in \dref{def:extraDofs}.}
\item[3)]
The local virtual space $\VhE$ is then defined by selecting a specific representative from each equivalence class in $\biglocalspace /_\sim$. For each $[\vh]\in\biglocalspace /_{\sim}$
we take the function $\wh \in [\vh]$ such that the \emph{extra} degrees of freedom (in \dref{def:extraDofs}) of $\wh$ are equal to those of $\spaceProj{\k}[\vh]$.
Note in particular that $\PE{\k}\subset\VhE$.
\end{itemize}

\begin{remark}
	This is a generalisation of the idea introduced in~\cite{EquivalentProjectors}, where only the $H^1(\E)$-orthogonal projection of $\vh$ into $\PE{\k}$ was considered for $\spaceProj{}$.
	The space resulting from this choice is well defined because the $H^1(\E)$-orthogonal projection of $\vh$ is computable using just the original degrees of freedom.
	However, the freedom in choosing $\spaceProj{}$ is something we wish to exploit to produce a more computationally efficient method, particularly when $\spacedim = 3$.
	We explore more possible choices of $\spaceProj{}$ in Section~\ref{subsec:imple}, although for now we leave this choice open.
\end{remark}

In more concrete terms, given a projector $\spaceProj{\k}$, we define the local virtual element spaces to be
\begin{equation}
	\label{eq:localVEMSpace}
	\VhE := \Big\{ \vh \in \biglocalspace :  \left(\vh - \spaceProj{\k}\vh, p\right)_{\E} = 0 \,\,\, \forall p \in \MEstar{\k} \cup \MEstar{\k-1} \Big\},
\end{equation}
where $\biglocalspace$ denotes either the enlarged conforming or nonconforming space. Clearly, we can use the original degrees of freedom of~\dref{def:originalDofs} to describe $\VhE$.

Computing $\Po{\k} \vh$  for each $\vh\in\VhE$ is now possible, since the terms on the right-hand since of~\eqref{eq:computingPovh} are either degrees of freedom of $\vh$ or moments of $\spaceProj{\k}\vh$.

\subsection{The Global Spaces}
The global virtual element space in each case is constructed as a subspace of an infinite dimensional space $V$, defined differently for the conforming and nonconforming methods.
For the conforming method, we simply take $V := H^1_0(\D)$.
For the nonconforming method, we introduce the subspace $H^{1,\text{nc}}_{\k}(\Th)$ of the nonconforming broken Sobolev space $H^1(\Th)$ defined in~\eqref{eq:brokenH1}, by imposing certain weak inter-element continuity requirements such that
\begin{equation*}
	V:= H^{1,\text{nc}}_{\k}(\Th) = \left\{\v \in H^1(\Th) : \ints \jump{v} \cdot \n_\s \, q \, \ds = 0 \quad \forall q \in \P{\s}{\k-1},\, \forall \s \in \Edges \right\}.
\end{equation*}
The jump operator $\jump{\cdot}$ across a mesh interface $\s\in \Edges$ is
defined as follows for $v\in H^1(\Th)$.  If $\s \in \InternalEdges$, then there exist $\E^+$ and $\E^-$ such that
$\s\subset \partial\E^+\cap\partial\E^-$.  Denote by $v^{\pm}$ the
trace of $v_{|_{\E^{\pm}}}$ on $\s$ from within $\E^{\pm}$ and by
$\n_\s^{\pm}$ the unit outward normal on $\s$ from $\E^{\pm}$. Then, $\jump{v}:=v^+\n_{\s}^{+}+v^-\n_{\s}^{-}$.  If, on the other hand,
$\s \in \BoundaryEdges$, then $\jump{v}:=v\n_{\s}$, with $\v$
representing the trace of $v$ from within the element $\E$ having $\s$
as an interface and $\n_{\s}$ is the unit outward normal on $\s$ from $\E$.

It may be seen that the broken Sobolev norm $|\cdot|_{1,h}$ is a norm on $H^{1,\text{nc}}_{\k}(\Th)$ and hence the same will be true for any virtual element subspace $\Vh$, as required by Assumption~\ref{ass:vem}, cf.~\cite{Brenner-PF}.

Finally, the global space is constructed in either case from the local spaces presented above as
\begin{equation*}
	\Vh := \left\{ \vh \in V : \vh |_{\E} \in \VhE \quad \forall \E \in \Th \right\}.
\end{equation*}

\begin{figure}[!t]
  \centering
  \begin{tabular}{cccc}
    \includegraphics[width=0.2\textwidth]{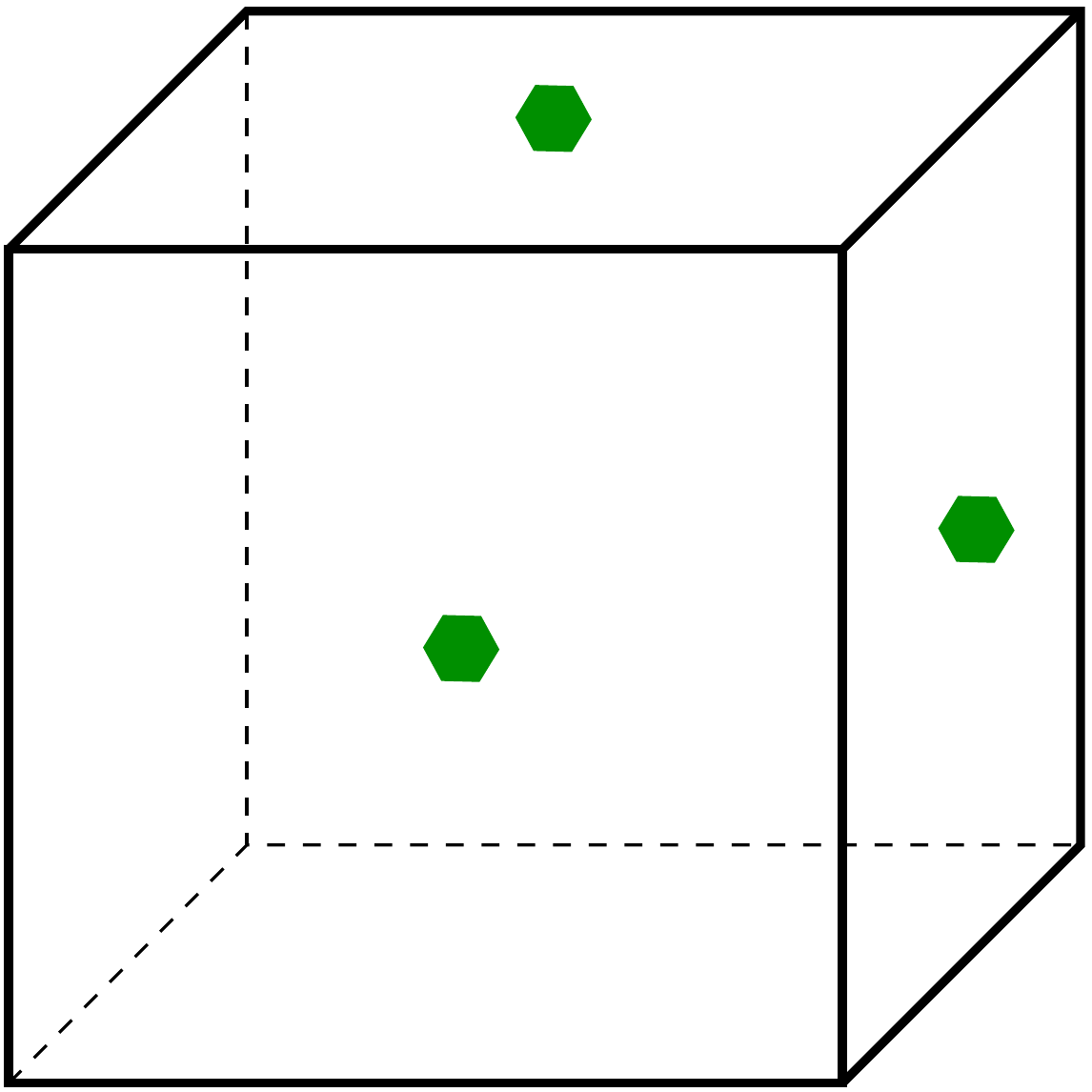} &\quad
    \includegraphics[width=0.2\textwidth]{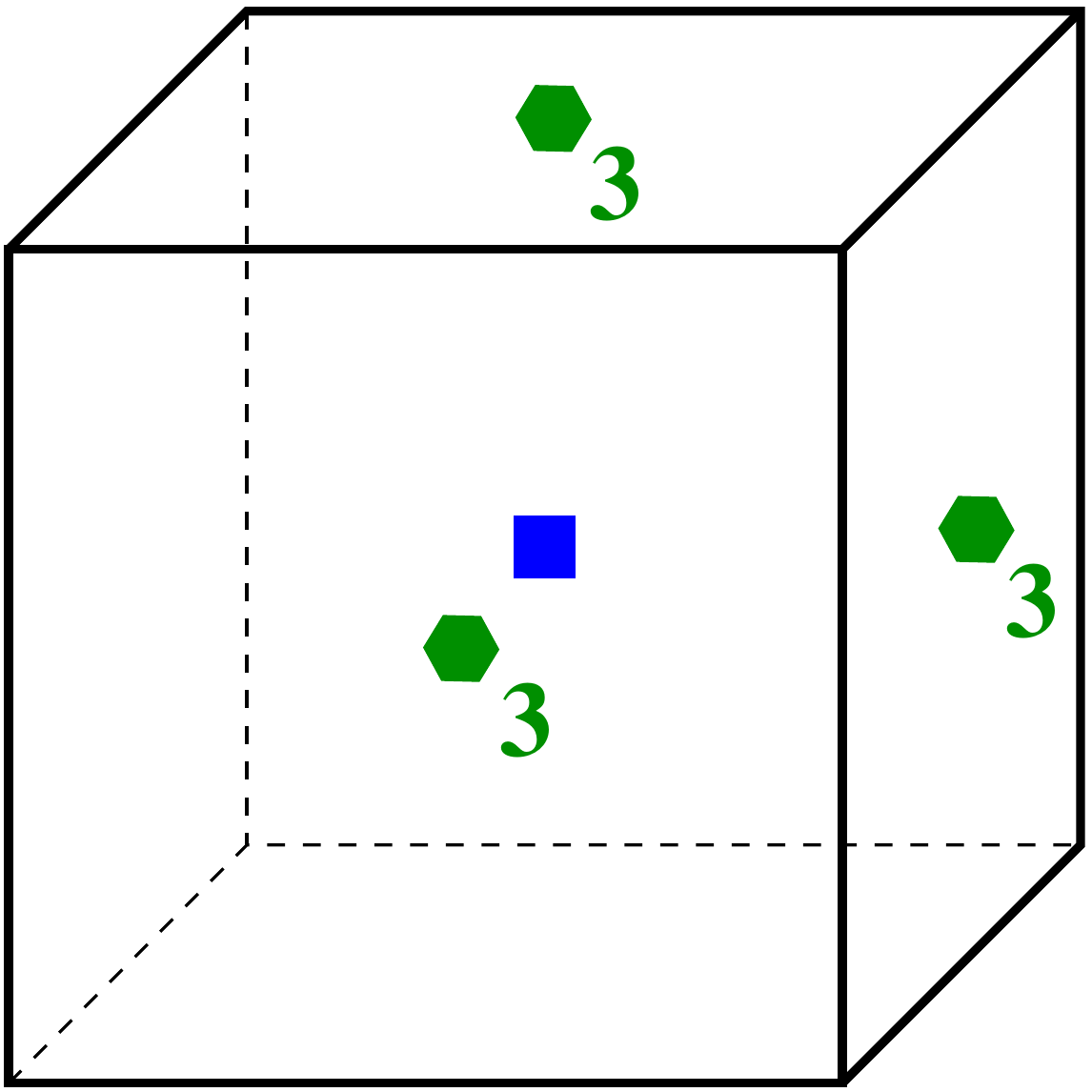} &\quad
    \includegraphics[width=0.2\textwidth]{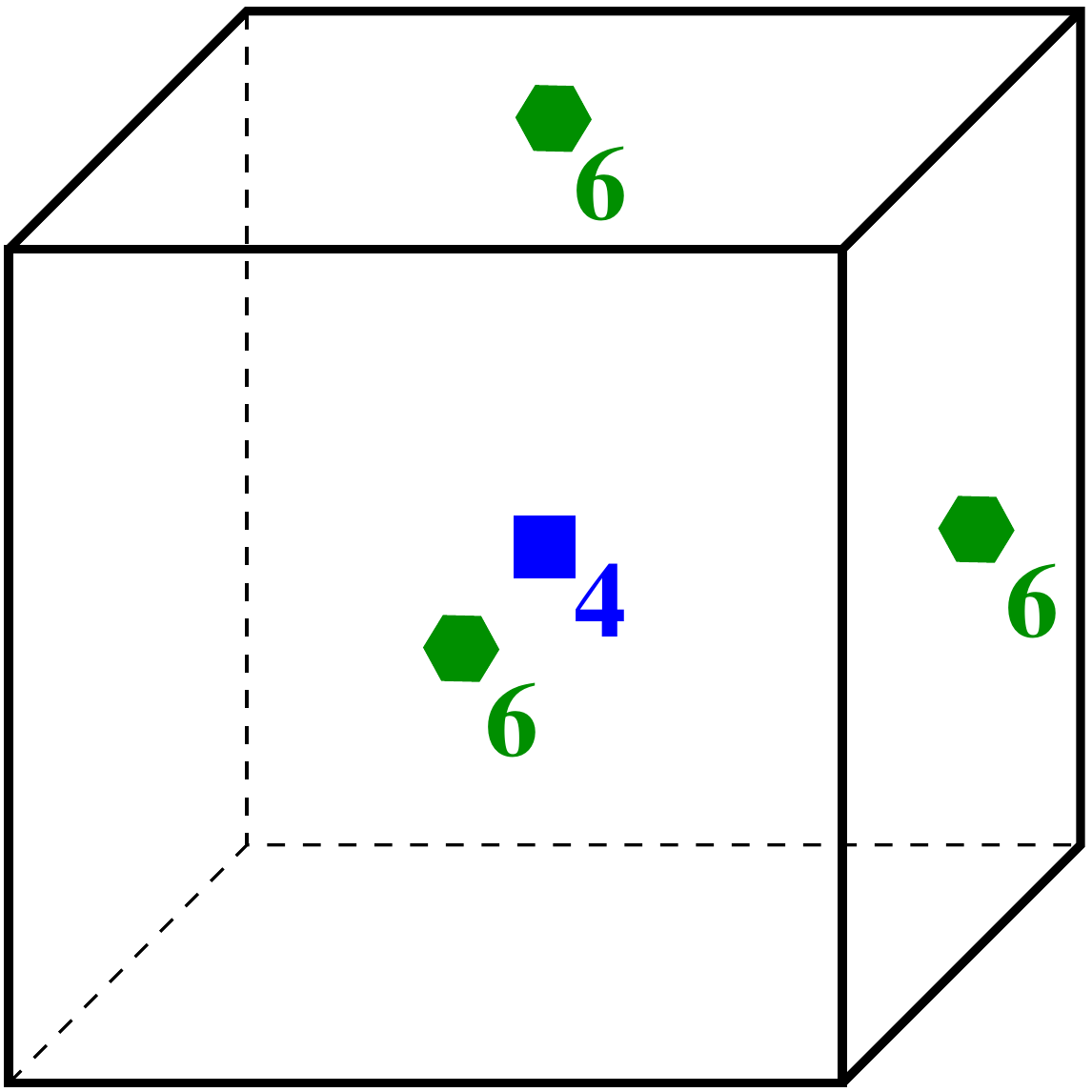} &\quad
    \includegraphics[width=0.2\textwidth]{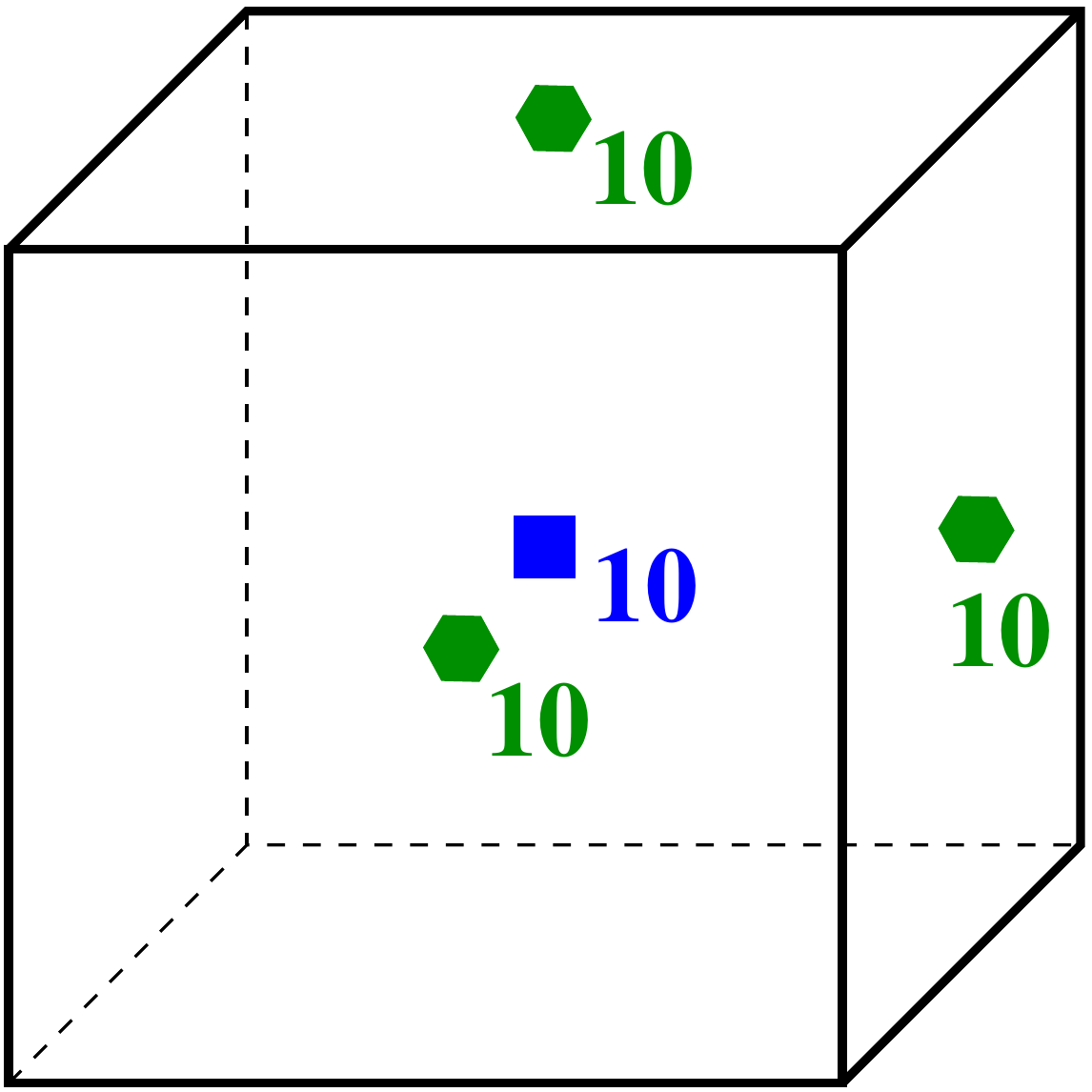} \\
    $\mathbf{k=1}$ & $\mathbf{k=2}$ & $\mathbf{k=3}$ & $\mathbf{k=4}$
  \end{tabular}
  \caption{Degrees of freedom of the non-conforming VEM for a
    cubic mesh element for $k=1,2,3,4$; face moments are marked by a
    hexagon; internal moments are marked by a square. Only the internal
    degrees of freedom and those of the visible faces are marked; the
    numeric labels indicate the number of degrees of freedom when they
    are more than $1$.}
  \label{fig:dofs:cube:nonconf-VEM}
\end{figure}

\begin{figure}[!t]
  \centering
  \begin{tabular}{cccc}
    \includegraphics[width=0.2\textwidth]{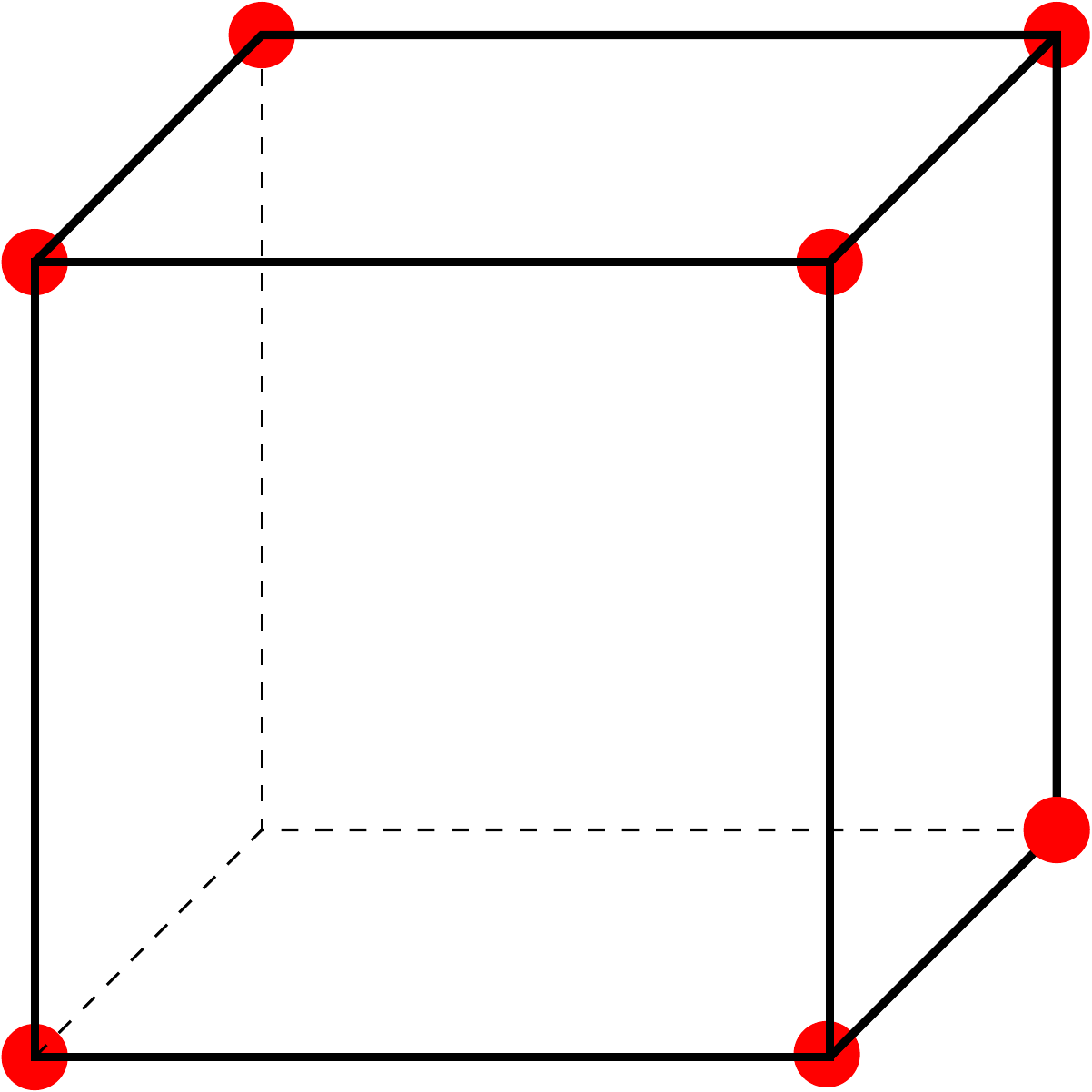} &\quad
    \includegraphics[width=0.2\textwidth]{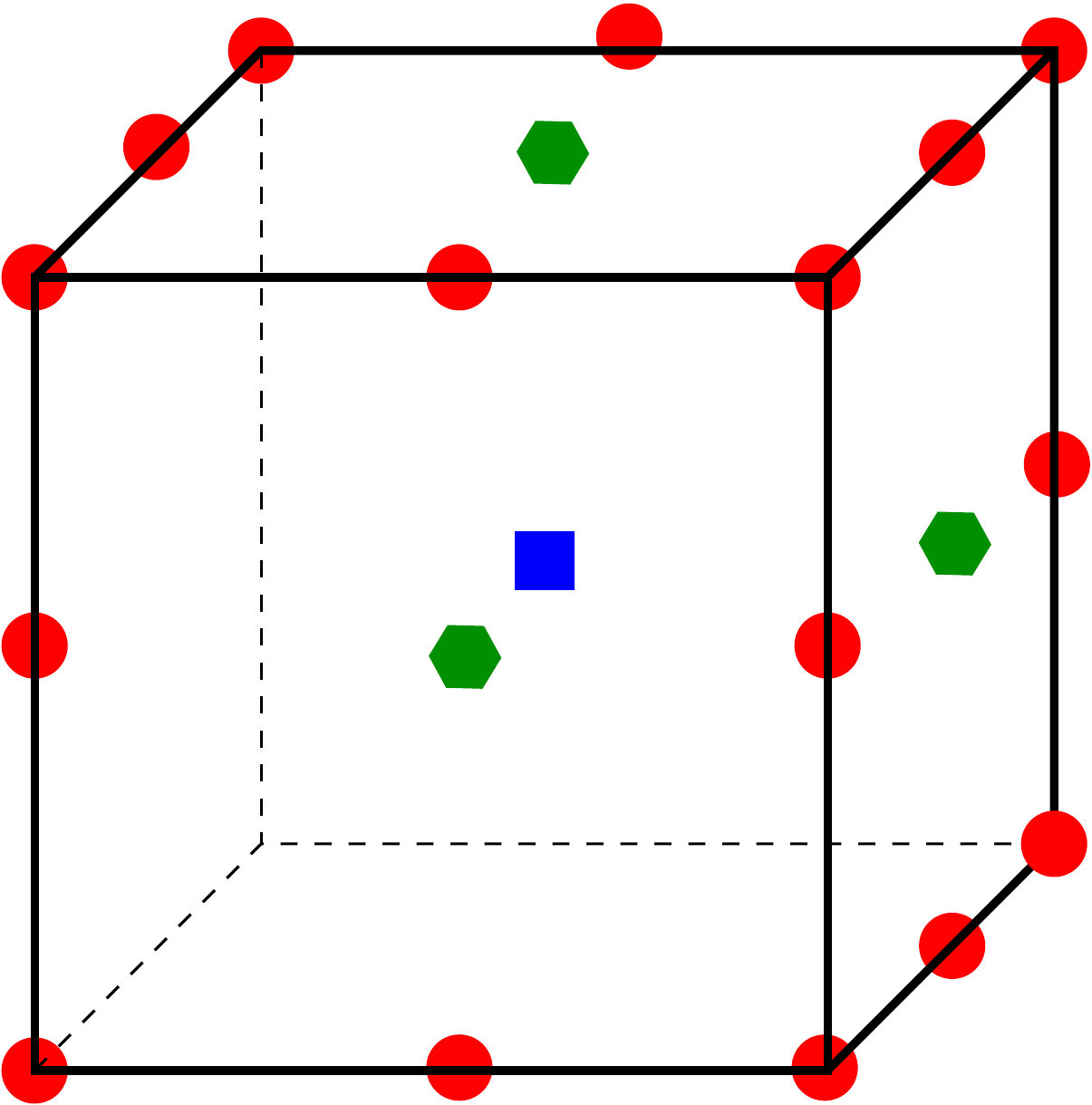} &\quad
    \includegraphics[width=0.2\textwidth]{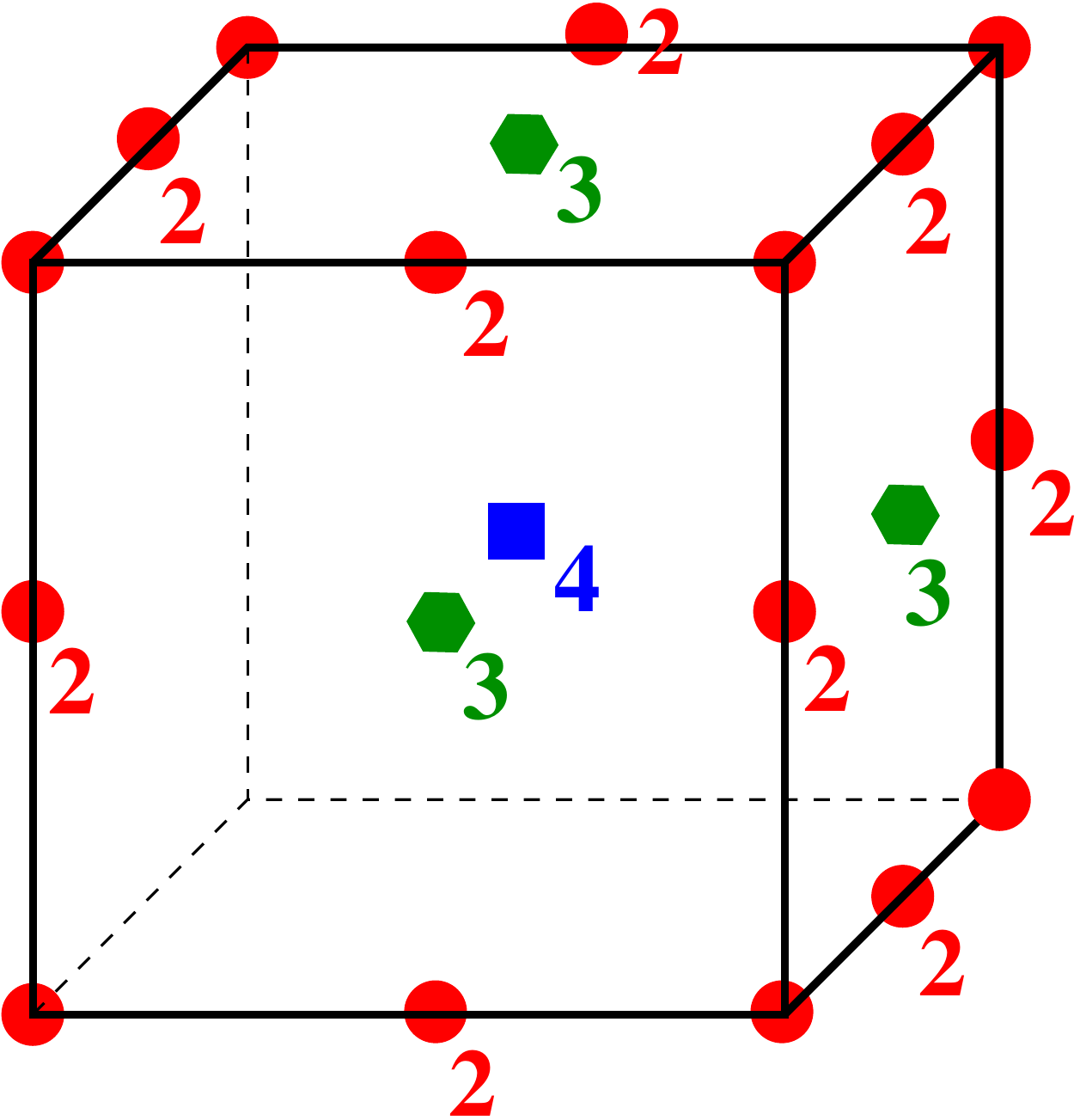} &\quad
    \includegraphics[width=0.2\textwidth]{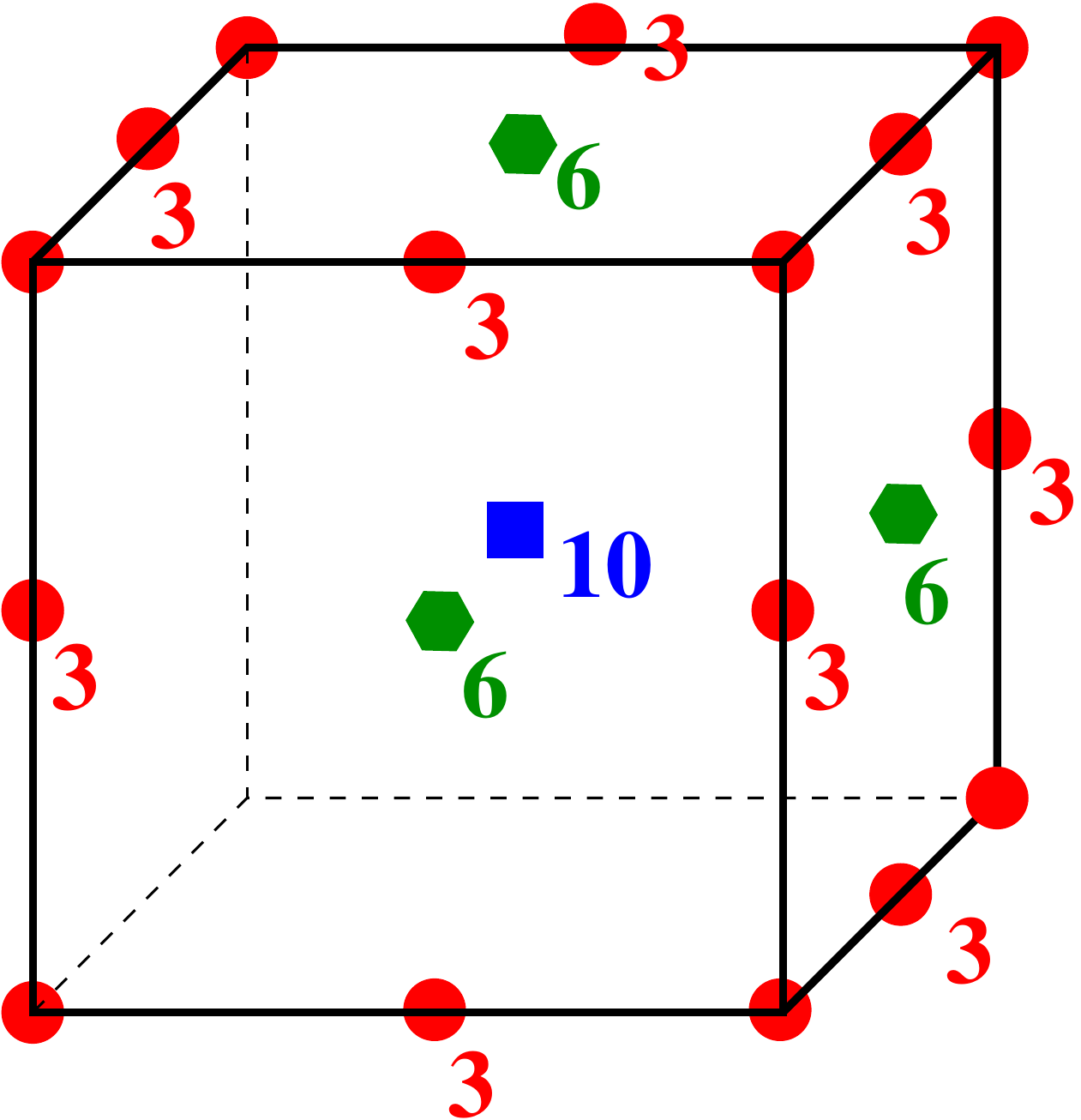} \\
    $\mathbf{k=1}$ & $\mathbf{k=2}$ & $\mathbf{k=3}$ & $\mathbf{k=4}$
  \end{tabular}
  \caption{Degrees of freedom of the conforming VEM for a cubic mesh 
	element for $k=1,2,3,4$; vertex values and edge moments are marked 
	by a circle; face moments are marked by an hexagon; internal moments are
    marked by a square. Only the internal degrees of freedom and those 
	of the visible faces and edges are marked; the numeric labels
    indicate the number of degrees of freedom when they are more than
    $1$.}
  \label{fig:dofs:cube:conf-VEM}
\end{figure}

As global degrees of freedom we take the equivalents of those in \dref{def:originalDofs}, namely, for each function $\vh \in \VhE$,
\begin{enumerate}[$(a)$]
	\item for the conforming space
		\begin{itemize}
			\item the value of $\vh$ at each internal vertex of $\Th$;
			\item for $\k > 1$, the moments of $\vh$ of up to order $\k-2$ on each mesh interface $\s \in \InternalEdges$ 
			\begin{equation*}
				\frac{1}{\abs{\s}} \ints \vh \ma \ds \quad \forall m_{\alpha} \in \Ms{\k-2};
			\end{equation*}
		\end{itemize}
		for the nonconforming space, the moments of $\vh$ of up to order $\k-1$ on each mesh interface $\s \in \InternalEdges$
                \begin{equation*}
                  \frac{1}{\abs{\s}} \ints \vh \ma \ds \quad \forall \ma \in \Ms{\k-1};
                \end{equation*}
		\item for $\k > 1$, the moments of $\vh$ of up to order $\k - 2$ inside each element $\E \in \Th$ 
		\begin{equation*}
			\frac{1}{\abs{\E}} \intE \vh \ma \dx \quad \forall m_{\alpha} \in \ME{\k-2}.
		\end{equation*}
\end{enumerate}
The local degrees of freedom corresponding to boundary vertices and edges $\s \in \BoundaryEdges$ are fixed as zero in accordance with the definition of the ambient spaces.
The unisolvency of these degrees of freedom follows from the definition of the relevant ambient space in each case and the unisolvency of the local degrees of freedom.

\subsection{The Conforming Space for $\spacedim = 3$}
The construction of the local conforming virtual element space for $\spacedim=3$ is based recursively on the space just detailed for $\spacedim=2$.
Define $\Vhb$ as the $2$-dimensional conforming virtual element space of order $k$ constructed over the polygonal interfaces making up $\dE$.
Then, we define the space $\biglocalspace$ in this case to be
\begin{equation*}
	\biglocalspace := \{ \vh \in H^1(\E) : \vh|_{\dE} \in \Vhb \text{ and } \Delta \vh \in \PE{\k} \}.
\end{equation*}
The degrees of freedom that we take for each function $\vh \in \VhE$ are then
\begin{enumerate}[$(a)$]
	\item the degrees of freedom of $\Vhb$;
	\item for $\k > 1$, the moments of $\vh$ of up to order $\k - 2$ inside the element $\E$ 
		\begin{equation*}
			\frac{1}{\abs{\E}} \intE \vh \ma \dx \quad \forall m_{\alpha} \in \ME{\k-2}.
		\end{equation*}
\end{enumerate}
plus the extra degrees of freedom of \dref{def:extraDofs}.

This allows us to construct the space $\VhE$ from $\biglocalspace$ in exactly the same manner detailed above. As before, the space $\VhE$ is , spanned by just the first sets of degrees of freedom given in (a) and (b) above. The proof that these degrees of freedom are unisolvent is again given in~\cite{EquivalentProjectors}. 
Also, it is clear that the dimension of the local space for $\spacedim = 3$ is $\NE = \nuE''+\nuE'N_{1,k-2}+\nuE N_{2,k-2}+N_{3,k-2}$ where $\nuE''$ and $\nuE'$  denote, respectively, the number of vertices and edges of $\E$. The degrees of freedom for a cubic element are shown in Figure~\ref{fig:dofs:cube:conf-VEM}.

Computing $\Po{\k} \vh$ is just the same as for $\spacedim = 2$, since the terms on the right hand since of~\eqref{eq:computingPovh} are either degrees of freedom of $\vh$ or moments of $\spaceProj{\k}\vh$.
To compute $\Po{\k-1}\nabla\vh$, we must compute the face terms in~\eqref{eq:computingProjGradvh}.
Using the $\LTWO(\s)$-orthogonal projection on the face $\s$, these may be rewritten as
\begin{equation*}
	\int_{\s} \n \cdot \bm{\ma} \Pos{\k}\vh \ds \qquad \forall \s \subset \dE.
\end{equation*}
The face projection $\Pos{\k}\vh$ is computable using the degrees of freedom of $\vh$ on the face $\s$ since $\vh|_{\s} \in V_h^{\s}$, and consequently this term is also computable.
This means that the degrees of freedom in this space allow us to compute both of the required terms in~\eqref{eq:projectionsToCompute}.

Finally, the global space and the set of global degrees of freedom for $\spacedim=3$ are constructed from the local ones in the obvious way, completely analogously to the case for $\spacedim=2$.

\subsection{Approximation Properties}
\label{subsec:approximation-properties}
Both the conforming and nonconforming spaces presented above satisfy optimal approximation results for the approximation of sufficiently smooth functions.
Since these results will be used throughout the remainder of the paper, we collect them together here.
They rely on the following assumption on the regularity of the mesh $\Th$:
\begin{assump}{A3}{(Mesh regularity).}
	\label{ass:meshReg}
	We assume the existence of a constant $\meshReg > 0$
 such that
	\begin{itemize}
		\item for every element $\E$ of $\Th$ and every interface $\s$ of $\E$, $h_{\s} \geq \meshReg h_{\E}$
		\item every element $\E$ of $\Th$ is star-shaped with respect to a ball of radius $\meshReg h_{\E}$
		\item for $\spacedim=3$, every interface $\s \in \Edges$ is star-shaped with respect to a ball of radius $\meshReg h_{\s}$.
	\end{itemize}
\end{assump}

\begin{theorem}[Approximation using polynomials]
	\label{thm:polynomialApproximation}
	Suppose that Assumption~\ref{ass:meshReg} is satisfied.
	Let $\E\in\Th$ and let $\Po{\ell} : \LTWO(\E) \rightarrow \PE{\ell}$, for $\ell\ge 0$, denote the $\LTWO(\E)$-orthogonal projection onto the polynomial space $\PE{\ell}$.
	Then, for any $\w \in H^{\reg}(\E)$, with  $1 \leq \reg \leq \ell+1$, it holds
	\begin{equation*}
		\norm{\w - \Po{\ell}\w}_{0,\E} + h_{\E} \abs{\w - \Po{\ell}\w}_{1,\E} \leq Ch_{\E}^\reg \abs{\w}_{\reg,\E}.
	\end{equation*}
	Let $\s$ be an interface shared by $\E^{+}, \E^{-}\in\Th$ and let $\Pos{l}:\LTWO(\s)\rightarrow \mathcal{P}_l(\s)$, for $l\ge 0$, denote the $\LTWO(\s)$-orthogonal projector onto the polynomial space $\mathcal{P}_l(\s)$. Then,  for every $\w \in H^{\reg}(\E^{+}\cup\E^{-})$, with  $1 \leq \reg \leq \ell+1$, it holds
	\begin{equation*}
	\big|\w - \Pos{\ell}\w\big|_{0,\s} + h_\s \big|\w - \Pos{\ell}\w\big|_{1,\s} \leq Ch_{\s}^{\reg-1/2} \norm{\w}_{\reg,\E^{+}\cup\E^{-}}.
	\end{equation*}
	In both instances, the positive constant $C$ depends only on the polynomial degree $\ell$ and the mesh regularity.
	\end{theorem}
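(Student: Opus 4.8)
The plan is to reduce everything to the classical Bramble--Hilbert (Dupont--Scott) machinery on star-shaped domains, using Assumption~\ref{ass:meshReg} to keep all constants uniform in the mesh. For the bulk estimate, since $\E$ is star-shaped with respect to a ball of radius $\meshReg h_\E$, its chunkiness parameter is bounded by $\meshReg^{-1}$, so the averaged Taylor polynomial $Q_\ell \w \in \PE{\ell}$ satisfies $\abs{\w - Q_\ell \w}_{m,\E} \le C h_\E^{\reg - m}\abs{\w}_{\reg,\E}$ for integer $0 \le m \le \reg$, with $C = C(\ell, \meshReg)$; the non-integer range $1 \le \reg \le \ell+1$ then follows by interpolation between consecutive integer orders. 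The $\LTWO$ bound comes for free from the optimality of the orthogonal projection, $\norm{\w - \Po{\ell}\w}_{0,\E} \le \norm{\w - Q_\ell \w}_{0,\E} \le C h_\E^{\reg}\abs{\w}_{\reg,\E}$. For the $H^1$-seminorm I would write $\w - \Po{\ell}\w = (\w - Q_\ell \w) + (Q_\ell \w - \Po{\ell}\w)$, bound the first summand directly by Bramble--Hilbert, and control the second --- a polynomial --- with the inverse inequality $\abs{q}_{1,\E} \le C h_\E^{-1}\norm{q}_{0,\E}$ (valid with constant $C(\ell,\meshReg)$ on star-shaped elements), reducing it once more to the already-established $\LTWO$ estimate. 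Multiplying through by $h_\E$ gives the first claim.

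For the interface estimate I would transfer these bulk bounds to $\s$ by a scaled trace inequality. Fix the element $\E^+$ having $\s$ as a face and let $p := Q_\ell \w \in \PE{\ell}$ be the averaged Taylor polynomial built on $\E^+$, so that $p|_\s \in \P{\s}{\ell}$. By optimality of $\Pos{\ell}$ and the scaled trace inequality $\norm{g}_{0,\s}^2 \le C\big(h_\E^{-1}\norm{g}_{0,\E^+}^2 + h_\E \abs{g}_{1,\E^+}^2\big)$ applied to $g = \w - p$, the Bramble--Hilbert bounds $\norm{\w-p}_{0,\E^+} \le C h_\E^{\reg}\abs{\w}_{\reg,\E^+}$ and $\abs{\w-p}_{1,\E^+} \le C h_\E^{\reg-1}\abs{\w}_{\reg,\E^+}$ combine to give $\abs{\w - \Pos{\ell}\w}_{0,\s} \le \norm{\w-p}_{0,\s} \le C h_\E^{\reg-1/2}\norm{\w}_{\reg,\E^+\cup\E^-}$. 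Since $h_\E \le \meshReg^{-1} h_\s$ and $\reg - 1/2 > 0$, the power of $h_\E$ may be replaced by $h_\s^{\reg-1/2}$ at the cost of a factor bounded by $\max\{1,\meshReg^{-(\ell+1/2)}\}$, which yields the $\LTWO(\s)$ part.

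For the tangential $H^1(\s)$-seminorm I would again split $\w - \Pos{\ell}\w = (\w - p) + (p - \Pos{\ell}\w)$. The polynomial remainder is handled by the inverse inequality on the interface, $\abs{p - \Pos{\ell}\w}_{1,\s} \le C h_\s^{-1}\norm{p - \Pos{\ell}\w}_{0,\s}$, whose constant is uniform because Assumption~\ref{ass:meshReg} makes $\s$ itself star-shaped with respect to a ball of radius $\meshReg h_\s$ (the condition being trivial for $\spacedim = 2$); this reduces to the $\LTWO(\s)$ estimate just established and produces the factor $h_\s^{\reg - 3/2}$. The genuinely delicate term is $\abs{\w - p}_{1,\s}$: naively bounding it through a trace inequality applied to $\nabla(\w - p)$ would involve second bulk derivatives and hence force $\reg \ge 2$.

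I expect this last point to be the main obstacle, and I would resolve it with a fractional scaled trace theorem rather than the crude $H^1$ trace inequality: for $\reg \ge 3/2$ the trace of $\w$ lies in $H^{\reg - 1/2}(\s) \hookrightarrow H^1(\s)$, and interpolating the scaled trace estimate between integer endpoints (invoking the uniform chunkiness of $\E^+$) gives $\abs{\w - p}_{1,\s} \le C h_\s^{\reg - 3/2}\norm{\w}_{\reg, \E^+\cup\E^-}$; for $1 \le \reg < 3/2$ the tangential seminorm on the left is controlled by the same quantity in the interpolation sense, the inequality being non-vacuous precisely when that seminorm is finite. Collecting the two contributions and multiplying by $h_\s$ produces $h_\s \abs{\w - \Pos{\ell}\w}_{1,\s} \le C h_\s^{\reg - 1/2}\norm{\w}_{\reg,\E^+\cup\E^-}$, and adding the $\LTWO(\s)$ part completes the proof with $C = C(\ell, \meshReg)$.
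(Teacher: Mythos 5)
Your proposal is correct and follows essentially the same route as the paper: the paper proves this theorem simply by citing the classical star-shaped-domain approximation theory of Brenner--Scott and Dupont--Scott, which is exactly the machinery (averaged Taylor polynomials, Bramble--Hilbert, polynomial inverse estimates, and scaled trace inequalities under Assumption~(A3)) that you spell out. Your closing remarks on the tangential $H^1(\s)$ seminorm for $1 \leq \reg < 3/2$ address a genuine delicacy of the statement that the paper itself leaves implicit in its citation, so nothing in your argument conflicts with the paper's intended proof.
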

This theorem may be proven using the theory in~\cite{BrennerScott} for star-shaped domains and its extension to more general shaped elements presented in e.g.~\cite{ScottDupont}. We also have the following result regarding the approximation of sufficiently smooth functions by those of the virtual element space, which may be proven as in~\cite{SteklovVEM}.
\begin{theorem}[Approximation using virtual element functions]
	\label{thm:spaceApproximation}
	Suppose that Assumption~\ref{ass:meshReg} is satisfied and let $\Vh$ denote either the conforming or nonconforming virtual element space. Let $\reg$ be a positive integer such that $1 \leq \reg \leq \k+1$. Then, for any $\w \in H^\reg(\D)$, there exists an element $\wI \in \Vh$ such that
	\begin{equation*}
		\norm{\w - \wI}_{0} + h \abs{\w - \wI}_{1} \leq Ch^\reg \abs{\w}_{\reg}
	\end{equation*}
	where $C$ is a positive constant which depends only on the polynomial degree $\k$ and the mesh regularity.
\end{theorem}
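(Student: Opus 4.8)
The plan is to construct $w_I$ as the canonical degree-of-freedom interpolant and then to reduce the estimate to polynomial approximation (Theorem~\ref{thm:polynomialApproximation}) via the usual triangle-inequality device, the only genuinely virtual ingredient being a stability bound for the interpolation operator.

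First I would define $w_I$ elementwise: on each $\E\in\Th$, let $w_I|_\E\in\VhE$ be the unique function whose original degrees of freedom (Definition~\ref{def:originalDofs}) coincide with those of $w$; unisolvence of these degrees of freedom guarantees this is well defined. The local interpolants glue into a global $w_I\in\Vh$, since in the conforming case the shared vertex values and interface moments force the edge/face traces to agree, yielding $C^0$-continuity, while in the nonconforming case the matching of interface moments up to order $\k-1$ is exactly the weak continuity built into $H^{1,\text{nc}}_{\k}(\Th)$. Writing $\mathcal{I}$ for the elementwise map $w\mapsto w_I$, I would record two structural facts: $\mathcal{I}$ is the identity on $\VhE$ (again by unisolvence), and, since $\PE{\k}\subset\VhE$ and every $p\in\PE{\k}$ is determined by its original degrees of freedom (the computation displayed after Definition~\ref{def:extraDofs}), $\mathcal{I}$ reproduces polynomials of degree $\k$.

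Then, for any $p\in\PE{\k}$ and any $\E$, polynomial reproduction gives the identity $w-w_I=(w-p)-\mathcal{I}(w-p)$ on $\E$. Applying the triangle inequality and choosing $p=\Po{\k}w$, the terms $\norm{w-p}_{0,\E}$ and $\abs{w-p}_{1,\E}$ are already of the required order $Ch_\E^{\reg}\abs{w}_{\reg,\E}$ by Theorem~\ref{thm:polynomialApproximation} (the admissible range $1\le\reg\le\k+1$ matching that of the theorem). It therefore remains to bound $\norm{\mathcal{I}\eta}_{0,\E}+h_\E\abs{\mathcal{I}\eta}_{1,\E}$ for $\eta:=w-p$ by the same quantity. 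This I would do in two steps: (i) a norm equivalence stating that, for every $v\in\VhE$, $\norm{v}_{0,\E}^2+h_\E^2\abs{v}_{1,\E}^2$ is comparable, with constants depending only on $\k$ and the mesh regularity of Assumption~\ref{ass:meshReg}, to the suitably $h_\E$-scaled Euclidean norm of the vector of original degrees of freedom of $v$; and (ii) scaled trace and scaling inequalities bounding each degree of freedom of $\eta$ by $h_\E^{-\spacedim/2}\norm{\eta}_{0,\E}+h_\E^{1-\spacedim/2}\abs{\eta}_{1,\E}$ (plus higher-order terms for the pointwise vertex evaluations in the conforming case). Combining (i) and (ii) with Theorem~\ref{thm:polynomialApproximation} applied to $\eta$ yields the local estimate, and summing over $\E\in\Th$ gives the global bound.

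The main obstacle is step (i), the stability of the virtual interpolant, because $v\in\VhE$ is not a polynomial and no single reference element is available. I would establish it by rescaling $\E$ to unit diameter and exploiting Assumption~\ref{ass:meshReg} (star-shapedness with respect to a ball of radius $\meshReg h_\E$, and comparability of interface and element sizes) to keep all constants uniform across the mesh; this is where the regularity hypotheses are genuinely used, and in the three-dimensional conforming case the argument is applied recursively, invoking the two-dimensional result on the faces through the boundary space $\Vhb$ first. A secondary subtlety is the pointwise vertex degrees of freedom of the conforming space, which are not controlled by $H^1$-data at the endpoint $\reg=1$; these I would handle by bounding them through a scaled trace inequality together with the Bramble--Hilbert estimate on $\eta$, the constants again depending only on $\k$ and the mesh regularity. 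The nonconforming space, having only moment degrees of freedom, avoids this point entirely.
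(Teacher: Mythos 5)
Your strategy (canonical degree-of-freedom interpolant, polynomial reproduction, then stability of the interpolation operator via a DOF-norm equivalence) contains two genuine gaps, and it diverges from how the result is actually established: the paper does not prove Theorem~\ref{thm:spaceApproximation} directly but appeals to the argument of~\cite{SteklovVEM}, which combines a Cl\'ement/Scott--Zhang quasi-interpolant with an energy-orthogonality argument exploiting the fact that functions of $\VhE$ solve local Poisson problems with polynomial data. That route avoids precisely the two points where your proposal breaks down.

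The first gap is the endpoint $\reg=1$ in the conforming case. The conforming degrees of freedom include vertex values (and, for $\spacedim=3$, moments along one-dimensional edges), and point evaluation is not a bounded functional on $H^1(\E)$, nor on the trace space $H^{1/2}(\dE)$, in two or three dimensions. Hence for $\w\in H^1(\D)$ your ``canonical'' interpolant is simply not defined, and the patch you propose --- controlling the vertex values of $\eta=\w-\Po{\k}\w$ by a scaled trace inequality plus Bramble--Hilbert --- cannot work: there is no estimate of a point value by $\norm{\eta}_{0,\E}$ and $\abs{\eta}_{1,\E}$ alone, because no such bound exists on $H^1$. The theorem itself only asserts the existence of \emph{some} $\wI\in\Vh$, which is why the cited proof never evaluates $\w$ pointwise: it first replaces $\w$ by a continuous piecewise-polynomial quasi-interpolant (well defined for $H^1$ data, with optimal approximation properties), takes the virtual element function determined by that function's data, and estimates the discrepancy by an energy argument in which the difference lies in $H^1_0(\E)$ and is orthogonal, after integration by parts, to the polynomial Laplacian of the virtual function.

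The second gap is your step (i), which is the crux of your argument and is asserted rather than proved. A uniform equivalence on $\VhE$ between $\norm{v}_{0,\E}^2+h_\E^2\abs{v}_{1,\E}^2$ and a scaled Euclidean norm of the degrees of freedom is \emph{not} obtainable by the scaling argument you sketch: virtual element functions are not polynomials, so after rescaling $\E$ to unit diameter there is no fixed reference element and no shape-independent finite-dimensional space on which to invoke equivalence of norms; the space $\VhE$ itself varies with the element, and the constants must be shown uniform over the infinite family of admissible star-shaped polygons/polyhedra. Results of this kind (they are essentially the stability bounds for the VEM stabilisation term) are true under Assumption~\ref{ass:meshReg}, but their proofs are substantial pieces of analysis relying on the elliptic structure of the local space, trace and inverse inequalities for such functions --- they are at least as hard as the theorem you are trying to prove. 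The energy argument used in~\cite{SteklovVEM} needs none of this: it only uses polynomial approximation (your Theorem~\ref{thm:polynomialApproximation}), the Cl\'ement estimates, and the Poincar\'e inequality on $H^1_0(\E)$, which is why it is the standard proof in the virtual element literature.
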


\section{Error Analysis}
\label{sec:h1ErrorBound}

We are now in a position to prove an optimal order error bound in the
$H^1$- and $\LTWO$-norm for any of the VEM introduced above,
starting with an estimate of the nonconformity error introduced by
using the nonconforming virtual element space.
\begin{lemma}
  \label{lem:strangNonconformityTerm}
  Suppose that Assumptions~\ref{ass:vem}-\ref{ass:meshReg} are
  satisfied  and let $\u \in
  H^{\reg+1}(\D)$ for some positive integer $\reg \geq 1$ be the
  solution to~\eqref{eq:origVariationalForm}. Define $r =
  \min(k,\reg)$ and suppose that the coefficients $\diff, \conv,
  \reacSym \in W^{r+1,\infty}(\Omega)$. Then, there exists a
  positive constant $C$ independent of $h$ and $\u$ such that
  \begin{equation*}
    \sup_{\wh \in \Vh} \frac{\abs{\A(\u, \wh) - (\force, \wh)}}{\norm{\wh}_{1,h}} \leq Ch^{r} \norm{u}_{r+1},
  \end{equation*}
  where $\Vh$ is the nonconforming virtual element space described in Section~\ref{sec:vemSpaces}.
\end{lemma}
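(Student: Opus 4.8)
The plan is to follow the classical nonconforming consistency analysis (in the spirit of the second Strang lemma and the Crouzeix--Raviart estimate), reducing the residual $\A(\u,\wh)-(\force,\wh)$ to a sum of face jump terms that are then controlled by the weak inter-element continuity built into the nonconforming $\Vh$ together with the interface approximation estimate of Theorem~\ref{thm:polynomialApproximation}. First I would integrate by parts element by element. Integrating by parts the diffusion term in $\aE$ and the second (skew) convection term in $\bE$ on each $\E$, the zeroth-order contributions involving $\nabla\cdot\conv$ coming from $\reacSym$ and from $\bE$ cancel exactly, leaving the strong-form residual plus boundary integrals,
\begin{equation*}
  \AE(\u,\wh) = \big(-\nabla\cdot(\diff\nabla\u)+\conv\cdot\nabla\u+\reac\u,\,\wh\big)_{\E} + \intdE\Big(\diff\nabla\u\cdot\n - \tfrac12\,\u\,\conv\cdot\n\Big)\wh\,\ds.
\end{equation*}
Since $\u$ solves~\eqref{eq:pde} strongly (it is regular enough), the volume term equals $(\force,\wh)_{\E}$, so after summing over the mesh
\begin{equation*}
  \A(\u,\wh)-(\force,\wh) = \sum_{\E\in\Th}\intdE\Big(\diff\nabla\u\cdot\n - \tfrac12\,\u\,\conv\cdot\n\Big)\wh\,\ds.
\end{equation*}

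Next I would recast the element-boundary sum as a sum over faces. Because $\u\in H^{\reg+1}(\D)\hookrightarrow C^0(\overline{\D})$ and $\diff\nabla\u\in H^1(\D)^{\spacedim}$ globally (using $\diff\in W^{1,\infty}$), the normal flux $\diff\nabla\u\cdot\n_\s$ and the trace of $\u$ are single-valued across each interior interface, while $\u=0$ on $\dD$. Hence the two contributions on each interior $\s\in\InternalEdges$ collapse onto the scalar jump of $\wh$, the boundary-face terms carrying $\u$ vanish, and we obtain
\begin{equation*}
  \A(\u,\wh)-(\force,\wh) = \sum_{\s\in\Edges}\ints G_\s\,\jump{\wh}\,\ds, \qquad G_\s:=\diff\nabla\u\cdot\n_\s - \tfrac12\,\u\,\conv\cdot\n_\s.
\end{equation*}
The crucial structural fact is the weak continuity defining the nonconforming space, namely $\Pos{\k-1}\jump{\wh}=0$ on every $\s\in\Edges$ (including boundary faces, where the boundary degrees of freedom are set to zero). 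This lets me subtract the face polynomial $\Pos{\k-1}G_\s\in\P{\s}{\k-1}$ from $G_\s$ at no cost, and moreover, since $G_\s-\Pos{\k-1}G_\s$ is $\LTWO(\s)$-orthogonal to constants, I may replace the traces of $\wh$ by $\wh$ minus its element averages $c^{\pm}$ on $\E^{\pm}$.

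On each face a Cauchy--Schwarz estimate then gives
\begin{equation*}
  \Big|\ints G_\s\jump{\wh}\,\ds\Big| \leq \norm{G_\s-\Pos{\k-1}G_\s}_{0,\s}\big(\norm{\wh-c^+}_{0,\s}+\norm{\wh-c^-}_{0,\s}\big).
\end{equation*}
The first factor is bounded by $Ch_\s^{r-1/2}\norm{G_\s}_{r,\E^+\cup\E^-}$ by the interface estimate of Theorem~\ref{thm:polynomialApproximation} (with $\ell=\k-1$ and smoothness index $r\le\k$), and the second by $Ch_\s^{1/2}(|\wh|_{1,\E^+}+|\wh|_{1,\E^-})$ via a scaled trace inequality combined with a Poincar\'e estimate, both valid under Assumption~\ref{ass:meshReg}; multiplying yields $Ch_\s^{r}$ per face. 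Finally, summing over $\s$, using $h_\s\le h$, Cauchy--Schwarz in the face sum with the uniformly bounded number of faces per element, and the bound $\norm{G_\s}_{r,\E^+\cup\E^-}\le C\norm{u}_{r+1}$ coming from $\diff,\conv\in W^{r+1,\infty}$, gives $\abs{\A(\u,\wh)-(\force,\wh)}\le Ch^r\norm{u}_{r+1}|\wh|_{1,h}\le Ch^r\norm{u}_{r+1}\norm{\wh}_{1,h}$, which is the claim.

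The main obstacle is the middle step: one must carefully justify collapsing the element-boundary integrals onto single face integrals (single-valuedness of the normal flux from the global $H^1$-regularity of $\diff\nabla\u$, and continuity of $\u$ and $\conv$), and, above all, deploy the weak continuity $\Pos{\k-1}\jump{\wh}=0$ in exactly the right way so that the full approximation order $r$ is extracted from $G_\s$ while the jump only costs the expected $h_\s^{1/2}|\wh|_{1}$. Keeping track of the coefficient regularity needed to ensure $G_\s\in H^{r}(\E^+\cup\E^-)$ (precisely the $W^{r+1,\infty}$ hypotheses) and of the powers of $h_\s$ on each face is the part most prone to slips.
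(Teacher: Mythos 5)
Your proposal is correct and follows essentially the same route as the paper's proof: elementwise Green's identity reducing the residual to face integrals of $(\diff\nabla\u-\tfrac12\u\conv)\cdot\jump{\wh}$, insertion of the face projection $\Pos{\k-1}$ of the flux (justified by the weak continuity defining $H^{1,\text{nc}}_{\k}(\Th)$) together with a zeroth-order correction of the jump, and then Cauchy--Schwarz plus the interface estimates of Theorem~\ref{thm:polynomialApproximation}. The only cosmetic differences are that you spell out the integration by parts (including the exact cancellation of the $\nabla\cdot\conv$ terms) and use element averages $c^{\pm}$ with a scaled trace/Poincar\'e inequality where the paper subtracts $\Pos{0}\jump{\wh}$; both are equivalent instances of the classical Crouzeix--Raviart argument.
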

\begin{proof}
  We apply the definition of the jump operator $\jump{\cdot}$ and the
  Green's identity under the assumption that $\reg\geq 1$, which
  implies that $\u \in H^2(\D)$, and use the fact that $\Vh\subset
  H^{1,\text{nc}}_{\k}(\Th)$ to obtain
  \begin{align}
    &\abs{\A(\u, \wh) - (\force, \wh)} = \abs{\sum_{\s \in \Edges} \ints (\diff \nabla \u - \frac{1}{2}\u\conv ) \cdot \jump{\wh} \ds} \notag \\
    &\qquad\quad= \abs{\sum_{\s \in \Edges} \ints ( (\diff \nabla \u - \frac{1}{2}\u\conv )-\Pos{\k-1}(\diff \nabla \u - \frac{1}{2}\u\conv)) \cdot (\jump{\wh}-\Pos{0}\jump{\wh}) \ds} \notag.
  \end{align}
  Using the Cauchy-Schwartz inequality  and then applying the approximation estimates of Theorem~\ref{thm:polynomialApproximation} to bound each of the resulting terms, we obtain, 
  cf.~\cite{NonconformingVEM} or~\cite{CrouzeixRaviart}, 
  \begin{align*}
    \abs{\A(\u, \wh) - (\force, \wh)} \leq Ch^{r} \norm{\u}_{r+1,\E^{+} \cup \E^{-}} \abs{\wh}_{1,\E^{+} \cup \E^{-}},
  \end{align*}
  where for each side $\s$ the symbols $E^{+}$ and $E^{-}$ denote the
  two elements sharing that side, and consequently the lemma holds.
\end{proof}

\begin{theorem}[$H^1$ error bound]
	\label{thm:h1ErrorBound}
	Suppose that Assumptions~\ref{ass:vem}-\ref{ass:meshReg} are satisfied. 
	Let $\k \geq 1$ be a positive integer and let $\u \in H^{\reg+1}(\D)$ be the true solution to problem~\eqref{eq:origVariationalForm} for some positive integer $\reg$.
	Define $r = \min(\k, \reg)$ and suppose that the coefficients $\diff, \conv, \reacSym \in W^{r+1,\infty}(\Omega)$ satisfy \eqref{eq:diffEllipticity} and \eqref{eq:reacBound}. Let $\langle \forceh, \vh\rangle:=\sum_{\E\in\Th}(\forceh,\vh)_\E$, with $\forceh|_\E := \Po{\max(\k-2, 0)} \force|_\E$. 	
	Denote by $\uh \in \Vh$ the corresponding virtual element solution to problem~\eqref{eq:VEMproblem} where $\Vh$ is either the conforming or the nonconforming virtual element space presented in Section~\ref{sec:vemSpaces}.
	Then, there exists a constant $C$ independent of $h$ and $\u$ such that
	\begin{equation*}
		\|\u - \uh\|_1 \leq C h^r
		(\norm{u}_{r+1} + 
		\norm{\force}_{r-1})
	\end{equation*}
\end{theorem}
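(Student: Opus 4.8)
The plan is to apply the abstract Strang estimate of Theorem~\ref{thm:vemStrang}, which bounds $\norm{\u-\uh}_{1,h}$ (the quantity written $\norm{\u-\uh}_1$, understood as the broken norm in the nonconforming case) by a constant times the sum of four terms: a best-approximation term in $\Vh$, a data-approximation term measuring $\forceh-\force$, a polynomial-consistency term, and a nonconformity term. The whole proof reduces to estimating these four suprema and infima by $Ch^r(\norm{\u}_{r+1}+\norm{\force}_{r-1})$. Two of them are immediate: the best-approximation term $\inf_{\vh\in\Vh}\norm{\u-\vh}_{1,h}$ is controlled by Theorem~\ref{thm:spaceApproximation} applied with smoothness index $r+1\le\k+1$, giving $Ch^r\abs{\u}_{r+1}$; and the nonconformity term is exactly the quantity estimated in Lemma~\ref{lem:strangNonconformityTerm}, yielding $Ch^r\norm{\u}_{r+1}$ for the nonconforming space and vanishing identically for the conforming space, since there $\Vh\subset H^1_0(\D)$ and $\A(\u,\wh)=(\force,\wh)$ by~\eqref{eq:newVariationalForm}.

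For the data-approximation term I would use the orthogonality of the $\LTWO$-projection. Writing $\langle\forceh,\wh\rangle-(\force,\wh)=\sum_{\E}(\Po{m}\force-\force,\wh)_\E$ with $m=\max(\k-2,0)$, orthogonality lets me replace $\wh$ by $\wh-\Po{m}\wh$ in each elemental term, so that Cauchy--Schwarz gives a product $\norm{\force-\Po{m}\force}_{0,\E}\,\norm{\wh-\Po{m}\wh}_{0,\E}$. Theorem~\ref{thm:polynomialApproximation} bounds the first factor by $Ch^{r-1}\abs{\force}_{r-1,\E}$ and the second, crucially contributing one extra power of $h$, by $Ch\abs{\wh}_{1,\E}$; summing over elements and using Cauchy--Schwarz in the sum gives $Ch^r\norm{\force}_{r-1}\norm{\wh}_{1,h}$. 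The low-regularity endpoints $r=1$ (and $\k=1$, where $m=0$) are covered by replacing the approximation bound on $\force$ with the trivial bound $\norm{\force-\Po{m}\force}_{0,\E}\le\norm{\force}_{0,\E}$.

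The consistency term is the heart of the matter. I would take $p$ to be the piecewise $\LTWO$-projection $\Po{\k}\u$, so that $\norm{\u-p}_{1,h}\le Ch^r\abs{\u}_{r+1}$ by Theorem~\ref{thm:polynomialApproximation}, and estimate $\abs{\AE(p,\wh)-\AhE(p,\wh)}$ for a polynomial $p\in\PE{\k}$. The key simplification is that $\nabla p\in(\PE{\k-1})^{\spacedim}$ and $p\in\PE{\k}$ are already polynomials, so $\Po{\k-1}\nabla p=\nabla p$ and $\Po{\k}p=p$ in the polynomial-consistency formulas of Assumption~\ref{ass:bilinearForms}. Each elemental difference then collapses to a pairing of a coefficient--polynomial product against an unresolved projection of the test function, namely terms of the form $(\diff\nabla p,\nabla\wh-\Po{\k-1}\nabla\wh)_\E$, $(\reacSym p,\wh-\Po{\k}\wh)_\E$, and the two convective analogues. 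In every case the second factor is $\LTWO$-orthogonal to the relevant polynomial space, so I may subtract the projection of the coefficient--polynomial product at no cost; Cauchy--Schwarz then bounds each term by, for instance, $\norm{\diff\nabla p-\Po{\k-1}(\diff\nabla p)}_{0,\E}\,\abs{\wh}_{1,\E}$, and Theorem~\ref{thm:polynomialApproximation} turns the first factor into $Ch^r\abs{\diff\nabla p}_{r,\E}$ (the reaction and one convective term even gain an extra power of $h$ from $\norm{\wh-\Po{\k}\wh}_{0,\E}\le Ch\abs{\wh}_{1,\E}$).

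The main obstacle, and the only genuinely delicate point, is to control the seminorms $\abs{\diff\nabla p}_{r,\E}$, $\abs{\reacSym p}_{r,\E}$, and $\abs{\conv p}_{r,\E}$ by $\norm{\u}_{r+1,\E}$. Here I would invoke the $W^{r+1,\infty}$-regularity of the coefficients together with the Leibniz rule to reduce, e.g., $\abs{\diff\nabla p}_{r,\E}$ to $C\norm{\diff}_{W^{r,\infty}(\E)}\norm{\nabla p}_{r,\E}$, and then control $\norm{\nabla p}_{r,\E}=\norm{\nabla\Po{\k}\u}_{r,\E}$ using the stability of the $\LTWO$-projection in the relevant Sobolev norm together with $\norm{\nabla\u}_{r,\E}\le\norm{\u}_{r+1,\E}$; this is precisely where the coefficient regularity and the shape-regularity of the mesh (Assumption~\ref{ass:meshReg}) enter. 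Summing the resulting elemental bounds over $\Th$ with Cauchy--Schwarz, collecting the four contributions, and absorbing all $h^{r+1}$ terms into $h^r$ for bounded $h$ then yields $\norm{\u-\uh}_{1,h}\le Ch^r(\norm{\u}_{r+1}+\norm{\force}_{r-1})$, valid in both the conforming and nonconforming cases.
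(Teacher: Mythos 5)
Your proposal is correct and follows essentially the same route as the paper's proof: the abstract Strang estimate of Theorem~\ref{thm:vemStrang}, the interpolant of Theorem~\ref{thm:spaceApproximation} for the best-approximation term, the orthogonality-plus-Cauchy--Schwarz argument for the data term, the choice $p=\Po{\k}\u$ with the self-adjointness trick of moving the projection onto the coefficient--polynomial product for the consistency term, and Lemma~\ref{lem:strangNonconformityTerm} for the nonconformity term. The only difference is cosmetic: you spell out (via the Leibniz rule and Sobolev-norm stability of the $\LTWO$-projection) how $\abs{\diff\nabla\Po{\k}\u}_{r,\E}$ is controlled by $\norm{\u}_{r+1,\E}$, a step the paper compresses into ``the results of Theorem~\ref{thm:polynomialApproximation} and the regularity assumption on $\diff$ and $\reacSym$''.
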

\begin{proof}
	We prove the theorem by separately bounding the terms of the Strang-type abstract convergence result of Theorem~\ref{thm:vemStrang}.
    The first term on the right-hand side of~\eqref{eq:vemStrang}, i.e. $\inf_{\vh \in \Vh} \norm{\u - \vh}_{1,h}$, is easily bounded by introducing any interpolant $\uI\in\Vh$ of $u$ as in Theorem~\ref{thm:spaceApproximation}.
        For the second term, first 
        we suppose that $\k \geq 2$. In this case we may apply the definition of the $\LTWO$-projection of $\force$ to find that
	\begin{align*}
		\sup_{\wh \in \Vh} \frac{\abs{\langle\forceh, \wh\rangle - (\force, \wh)}}{\norm{\wh}_{1,h}} &= \sup_{\wh \in \Vh} \frac{\abs{\sum_{\E \in \Th} \displaystyle\intE (\Po{\k-2} \force - \force) \wh \dx}}{\norm{\wh}_{1,h}} \\
			&= \sup_{\wh \in \Vh} \frac{\abs{\sum_{\E \in \Th} \displaystyle\intE (\Po{\k-2} \force - \force) (\wh - \Po{0}\wh) \dx}}{\norm{\wh}_{1,h}} \\
		&\leq \sup_{\wh \in \Vh} \frac{\sum_{\E \in \Th} \norm{\Po{\k-2} \force - \force}_{0,\E} \norm{\wh - \Po{0}\wh}_{0,\E}}{\norm{\wh}_{1,h}} \\
			&\leq C h^{r} \norm{\force}_{r-1}
	\end{align*}
    	having used the bounds of Theorem~\ref{thm:polynomialApproximation}.
	A similar argument applies with $\forceh = \Po{0}\force$ when $\k=1$.
	
	Turning now to the infimum over the polynomial subspace, we first observe that
	\begin{align*}
		\inf_{p \in \P{\Th}{\k}} &\left[\norm{\u - p}_{1, h} + \sum_{\E \in \Th} \sup_{\wh \in \VhE} \frac{ \abs{\AE(p, \wh) - \AhE(p, \wh)}}{\norm{\wh}_{1,\E}}\right] \\
			&\leq \norm{\u - \Po{\k}\u}_{1, h} + \sum_{\E \in \Th} \sup_{\wh \in \VhE} \frac{ \abs{\AE(\Po{\k}\u, \wh) - \AhE(\Po{\k}\u, \wh)}}{\norm{\wh}_{1,\E}}.
	\end{align*}

	To  treat the second term on the right-hand side, we recall the splitting of the bilinear forms into their symmetric and skew-symmetric parts, and bound each part separately.  
	 Here we only detail the bounding of the difference between the symmetric parts  since the skew-symmetric parts can be treated analogously.
	From the definition of $\aE$ and the polynomial consistency property of $\ahE$ (cf., Assumption~(A2)) it follows that 
	\begin{align*}
		\abs{\aE(\Po{\k}\u, \wh) - \ahE(\Po{\k}\u, \wh)} 
            &\leq \abs{\intE \diff \nabla \Po{\k}\u \cdot 
            (\Id-\Po{\k-1})\nabla \wh \dx}
            + \abs{\intE \reac \Po{\k}\u (\Id-\Po{\k})\wh \dx}\\
&= \abs{\intE (\Id - \Po{\k-1})(\diff \nabla \Po{\k}\u) \cdot \nabla \wh \dx} + \abs{\intE (\Id-\Po{\k})(\reac \Po{\k}\u) \wh \dx}\\
			&\leq \norm{\wh}_{1,\E} \left(\norm{(\Id - \Po{\k-1})(\diff \nabla \Po{\k}\u)}_{0,\E}+\norm{(I-\Po{\k})(\reac \Po{\k}\u) }_{0,\E}\right),
	\end{align*}
    and now the results of Theorem~\ref{thm:polynomialApproximation}  and the regularity assumption on $\diff$ and $\reacSym$ implies
	\begin{align*}
		\sup_{\wh \in \VhE} \frac{ \abs{\aE(\Po{\k}\u, \wh) - \ahE(\Po{\k}\u, \wh)}}{\norm{\wh}_{1,\E}} 
		&\leq C h_{\E}^{r} \norm{\u}_{r+1,\E}.
	\end{align*}
     A similar bound holds for $\abs{\bE(\Po{\k}\u,\wh) - \bhE(\Po{\k}\u,\wh)}$ due to the regularity assumption on $\conv$. Combining these bounds we therefore obtain
	\begin{align*}
		\sup_{\wh \in \VhE} \frac{ \abs{\AE(\Po{\k}\u, \wh) - \AhE(\Po{\k}\u, \wh)}}{\norm{\wh}_{1,\E}} 
        &\leq C h_{\E}^{r} \norm{\u}_{r+1,\E}.
	\end{align*}
	The result then follows by observing that Lemma~\ref{lem:strangNonconformityTerm} provides an optimal order bound for the remaining term relevant to the nonconforming case only.
\end{proof}

\begin{theorem}[$\LTWO$ error bound]
	Suppose that Assumptions~\ref{ass:vem}-\ref{ass:meshReg} are satisfied, and further assume that the domain $\D$ is convex. 
	Let $\k \geq 1$ be a positive integer and let $\u \in H^{\reg+1}(\D)$ be the solution to the problem~\eqref{eq:origVariationalForm} for some positive integer $\reg$.
	Define $r = \min(\k, \reg)$ and suppose that the coefficients $\diff, \conv, \reacSym \in W^{r+1,\infty}(\Omega)$ satisfy \eqref{eq:diffEllipticity} and \eqref{eq:reacBound}. Let $\langle \forceh, \vh\rangle:=\sum_{\E\in\Th}(\forceh,\vh)_\E$, with $\forceh|_\E := \Po{\k-1} \force|_\E$.
	Denote by $\uh \in \Vh$ the corresponding virtual element solution to problem~\eqref{eq:VEMproblem} where $\Vh$ is either the conforming or the nonconforming virtual element space presented in Section~\ref{sec:vemSpaces}.
	Then, there exists a constant $C$ independent of $h$ and $\u$ such that
	\begin{equation*}
		\norm{\u - \uh}_{0} \leq Ch^{r+1} \norm{\u}_{r+1}.
	\end{equation*}
\end{theorem}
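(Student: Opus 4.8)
The plan is to run an Aubin--Nitsche duality argument, the extra order being supplied by the convexity of $\D$. Set $e:=\u-\uh$ and introduce the adjoint problem: find $z\in H^1_0(\D)$ such that $\A(\v,z)=(e,\v)$ for all $\v\in H^1_0(\D)$, with $\A$ the form in~\eqref{eq:newVariationalForm}. Since the coefficients lie in $W^{r+1,\infty}(\D)\subset W^{1,\infty}(\D)$ and $\D$ is convex, elliptic regularity for the adjoint differential operator $\mathcal{L}^\star$ (so that $\mathcal{L}^\star z=e$) gives $z\in H^2(\D)$ with $\norm{z}_2\le C\norm{e}_0$. As $e\notin H^1_0(\D)$ in general, I would not test directly; instead, integrating by parts element by element exactly as in the proof of Lemma~\ref{lem:strangNonconformityTerm} and using $\mathcal{L}^\star z=e$, I would obtain the representation
\[
  \norm{e}_0^2=\A(e,z)-\sum_{\s\in\Edges}\ints\Big(\diff\nabla z+\tfrac12 z\conv\Big)\cdot\jump{e}\,\ds ,
\]
where $\A$ now denotes the broken form and the interface sum is present only in the nonconforming case (there $\jump{e}=-\jump{\uh}$), vanishing in the conforming one.

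Next I would peel off a virtual element interpolant $z_I\in\Vh$ of $z$ (Theorem~\ref{thm:spaceApproximation}) and write $\A(e,z)=\A(e,z-z_I)+\A(e,z_I)$. The first term is immediate from the $H^1(\Th)$-continuity of $\A$, the $H^1$ estimate of Theorem~\ref{thm:h1ErrorBound}, and $\norm{z-z_I}_{1,h}\le Ch\norm{z}_2$:
\[
  \abs{\A(e,z-z_I)}\le C\norm{e}_{1,h}\norm{z-z_I}_{1,h}\le Ch^{r}\norm{\u}_{r+1}\cdot h\norm{z}_2\le Ch^{r+1}\norm{\u}_{r+1}\norm{e}_0 ,
\]
having absorbed the forcing part of the $H^1$ bound via $\norm{\force}_{r-1}\le C\norm{\u}_{r+1}$, which holds because $\force=-\nabla\cdot(\diff\nabla\u)+\conv\cdot\nabla\u+\reac\u$ with coefficients in $W^{r+1,\infty}(\D)$. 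For the second term I would use both problems: from $\A(e,z_I)=\A(\u,z_I)-\A(\uh,z_I)$, I replace $\A(\uh,z_I)$ by $\Ah(\uh,z_I)=\langle\forceh,z_I\rangle$ plus the consistency error $\A(\uh,z_I)-\Ah(\uh,z_I)$, and expand $\A(\u,z_I)$ through the identity of Lemma~\ref{lem:strangNonconformityTerm}. This collects a forcing-consistency term $(\force,z_I)-\langle\forceh,z_I\rangle$, the bilinear-form consistency error $\A(\uh,z_I)-\Ah(\uh,z_I)$, and, in the nonconforming case, a further interface term carrying $\jump{z_I}=\jump{z_I-z}$.

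Each of these must be shown to be $O(h^{r+1})\norm{e}_0$, i.e.\ to gain one full power of $h$ over the $H^1$ analysis. For the forcing-consistency term I would use the sharper choice $\forceh|_\E=\Po{\k-1}\force$ (one degree higher than in Theorem~\ref{thm:h1ErrorBound}) and $\LTWO(\E)$-orthogonality to subtract a degree-$(\k-1)$ polynomial from $z_I$, trading the smoothness of $z$ for the extra $h$. The bilinear-form consistency error I would treat exactly as in Theorem~\ref{thm:h1ErrorBound}, splitting into symmetric and skew-symmetric parts, inserting $\Po{\k}$ and invoking the polynomial consistency of Assumption~\ref{ass:bilinearForms} together with Theorem~\ref{thm:polynomialApproximation}; the extra $h$ now appears because the residual is paired against factors built from $z_I$, which approximates the smooth $z$. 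The interface terms (both the one above and the nonconformity term in the representation of $\norm{e}_0^2$) are handled with the weak continuity $\ints\jump{z_I}\cdot\n\,q=\ints\jump{\uh}\cdot\n\,q=0$ for all $q\in\P{\s}{\k-1}$: subtracting edge polynomial projections from \emph{both} factors and applying the interface estimates of Theorem~\ref{thm:polynomialApproximation} together with the $H^1$ bound yields $O(h^{r+1})\norm{e}_0$ again. Summing and dividing by $\norm{e}_0$ gives the claim.

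The hard part will be precisely this uniform gain of one extra power of $h$ in \emph{all} the consistency and nonconformity contributions -- the super-approximation phenomenon. It rests on three ingredients: the $H^2$-regularity of $z$, which is exactly where convexity of $\D$ is used; the careful, term-by-term use of $\LTWO$-orthogonality on elements and on interfaces to subtract polynomials of the correct degree; and the symmetric/skew splitting of Assumption~\ref{ass:bilinearForms}, which lets the non-symmetric convective part be estimated by the same mechanism as the symmetric part. Among these, the forcing term is the most delicate bookkeeping, since the extra order is available only through the higher-order projection $\forceh=\Po{\k-1}\force$ and requires tracking the interplay between the regularity of $\force$ and the projection degree, particularly in the lowest-order case $\k=1$.
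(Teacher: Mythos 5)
Your proposal is correct and follows essentially the same route as the paper's own proof: an Aubin--Nitsche duality argument with the adjoint problem and $H^2$ regularity from convexity, elementwise integration by parts yielding the interface jump term (present only in the nonconforming case), and the identical four-term decomposition into $\A(\u-\uh,\psi-\psiI)$, the nonconformity term handled via Lemma~\ref{lem:strangNonconformityTerm} with the $\jump{\psiI}=\jump{\psiI-\psi}$ trick, the bilinear-form consistency error treated by polynomial insertion and $\LTWO$-orthogonality, and the forcing consistency term. The only deviations are cosmetic refinements (your sharper orthogonal subtraction in the forcing term and the explicit absorption of $\norm{\force}_{r-1}$ into $\norm{\u}_{r+1}$, which the paper leaves implicit).
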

\begin{proof}
	Let $\psi \in H^2(\D) \cap H^1_0(\D)$ be the solution to the dual problem
	\begin{equation}
		\label{eq:dualProblem}
		-\nabla \cdot (\diff \nabla \psi) - \conv \cdot \nabla \psi + (\reac - \nabla \cdot \conv) \psi = \u - \uh.
	\end{equation}
	Then, due to the convexity of $\D$, $\psi$ satisfies the regularity bound
	\begin{equation*}
		\norm{\psi}_{2} \leq C \norm{\u - \uh}_{0},
	\end{equation*}
	and consequently for any interpolant $\psiI$ as in Theorem~\ref{thm:spaceApproximation}, we have
	\begin{equation*}
		\norm{\psi - \psiI}_{1,h} \leq Ch\norm{\psi}_{2} \leq C h \norm{\u-\uh}_{0}.
	\end{equation*}
	Multiplying~\eqref{eq:dualProblem} by $\u=\uh$ and integrating, we find that
	\begin{align*}
		\norm{\u - \uh}_{0}^2 &= (\u - \uh, -\nabla \cdot (\diff \nabla \psi) - \conv \cdot \nabla \psi + (\reac - \nabla \cdot \conv) \psi) \\
			&= 
			\A(\u-\uh, \psi)
			 + \sum_{\s \in \Edges} \int_{\s} \left(\diff \nabla \psi - \frac{1}{2} \conv \psi\right) \cdot \jump{\u - \uh} \ds,
	\end{align*}
	and the edge-wise term may be bounded by arguing as in Lemma~\ref{lem:strangNonconformityTerm}, to find that
	\begin{align*}
		\sum_{\e \in \Edges} \int_{\e} \left(\diff \nabla \psi - \frac{1}{2} \conv \psi\right) \cdot \jump{\u - \uh} \ds &\leq Ch \norm{\psi}_{2} \norm{\u - \uh}_{1,h}  \\
			&\leq Ch^{r+1} \norm{\u}_{r+1} \norm{\u - \uh}_{0}.
	\end{align*}

 	To bound the other term, we add and subtract appropriately,
 	\begin{align*}
 		\A(\u-\uh, \psi) &= \A(\u-\uh, \psi - \psiI) + \A(\u-\uh, \psiI) \\
 			&= \A(\u-\uh, \psi - \psiI) + \left(\A(\u, \psiI) - (\force, \psiI)\right) +\\&\quad+ \left(\Ah(\uh, \psiI) - \A(\uh, \psiI)\right) + \left((\force, \psiI) - \langle\forceh, \psiI\rangle\right),
 	\end{align*}
 	obtaining terms which relate to those of the original abstract error bound.	We label these as $T_1$, $T_2$, $T_3$ and $T_f$ respectively, and bound them separately. 
	
	We first observe that $T_1$ may be bounded using the continuity of the variational form and the $H^1$-norm  error bound of Theorem~\ref{thm:h1ErrorBound} as
	\begin{align*}
		T_1:= \A(\u - \uh, \psi - \psiI) &\leq C\norm{\u - \uh}_{1,h} \norm{\psi - \psiI}_{1,h} \\
			&\leq Ch^{r+1} \norm{\u}_{r+1} \norm{\u - \uh}_{0}.
	\end{align*}
	The term $T_2$ measures the nonconformity of the method and may be bounded using Lemma~\ref{lem:strangNonconformityTerm} as
	\begin{align*}
		\abs{T_2} := \abs{\A(\u, \psiI) - (\force, \psiI)} &\leq Ch^{r}\norm{\u}_{r+1}\norm{\psi - \psiI}_{1,h} \\
			&\leq Ch^{r+1} \norm{\u}_{r+1} \norm{\u - \uh}_{0}.
	\end{align*}
	Using the definition of the $\LTWO$-projection, we can rewrite $T_f$ as
	\begin{align*}
		T_f :&= (\force, \psiI) - \langle\forceh, \psiI\rangle = \sum_{\E\in\Th}(\force - \Po{\k-1}\force , \psiI)_E = \sum_{\E\in\Th}(\force - \Po{\k-1}\force, \psiI - \Po{0}\psiI)_E \\
			&\leq \sum_{\E\in\Th}\norm{\force - \Po{\k-1}\force}_{0,E} \norm{\psiI - \Po{0}\psiI}_{0,E} \\
			&\leq C\sum_{\E\in\Th} h^{r} \norm{\force}_{r,E} h \norm{\psiI}_{1,E} \leq Ch^{r+1} \sum_{\E\in\Th}\norm{\force}_{r,E} \norm{\psi}_{2,E} \\
			&\leq Ch^{r+1} \norm{\force}_{r} \norm{\u - \uh}_{0}.
	\end{align*}
	Finally we turn to the inconsistency term $T_3$, namely
	\begin{align*}
		T_3 :&= \Ah(\uh, \psiI) - \A(\uh, \psiI) = \sum_{\E \in \Th} \AhE(\uh, \psiI) - \AE(\uh, \psiI) \\
			&= \sum_{\E \in \Th} \left(\AhE(\uh - \Po{\k} \u, \psiI - \Po{1}\psi) - \AE(\uh - \Po{\k} \u, \psiI - \Po{1}\psi)\right) + \\
				&\quad\qquad+ \left(\AhE(\Po{\k}\u, \psiI) - \AE(\Po{\k}\u, \psiI)\right) + \left(\AhE(\uh, \Po{1}\psi) - \AE(\uh, \Po{1}\psi)\right)
	\end{align*}
	The first difference can then easily be bounded using the fact that both the variational form and the VEM bilinear form are continuous in the $H^1$ norm, so
	\begin{align*}
		\AhE(\uh - \Po{\k} \u, \psiI - \Po{1}\psi) - \AE(\uh - \Po{\k} \u, \psi - \Po{1}\psi) &\leq C \norm{\uh - \Po{\k} \u}_{1,\E} \norm{\psiI - \Po{1}\psi}_{0, \E} \\
			&\leq Ch^{r+1} \norm{\u}_{r+1,\E} \norm{\u - \uh}_{0,\E}.
	\end{align*}
	The bound for the other two differences is obtained by splitting each bilinear form up into its constituent terms and applying the definition of polynomial consistency.
	For the diffusion terms, the polynomial consistency property means we consider
	\begin{align*}
		\ahE(\Po{\k}\u, \psiI) - \aE(\Po{\k}\u, \psiI) &= \intE \diff \nabla \Po{\k}\u \cdot  \left(\Po{\k-1} - \Id\right)\nabla \psiI \dx \\
			&= \intE \left(\Po{\k-1} - \Id\right) \left(\diff \nabla \Po{\k}\u \right) \cdot \nabla \left(\psiI - \psi\right) \dx +\\ 
				&\qquad+ \intE \left(\Po{\k-1} - \Id\right) \left(\diff \nabla \Po{\k}\u\right) \cdot \nabla \left(\psi - \Po{1} \psi\right) \dx \\
			&\leq Ch^{r+1} \norm{\u}_{r+1,\E} \norm{\u - \uh}_{0,\E},
	\end{align*}
	having applied the Cauchy-Schwarz inequality and the polynomial approximation bounds in the final step.
	The second difference is similarly treated by adding and subtracting terms and applying the Cauchy-Schwarz inequality and the polynomial approximation bounds along with the regularity of the dual solution $\psi$ and the $H^1$-norm error bound
	\begin{align*}
		\ahE(\uh, \Po{1}\psi) - \aE(\uh, \Po{1}\psi) &= \intE \diff \left(\Po{\k-1} - \Id\right) \nabla  \uh \cdot \nabla \Po{1} \psi \dx \\
			&= \intE \diff \left( \Po{\k-1} - \Id \right) \nabla (\uh - \u) \cdot \nabla \left( \Po{1} - \Id \right) \psi \dx +\\
				&\qquad+ \intE \diff \left( \Po{\k-1} - \Id \right) \nabla \u \cdot \nabla \left( \Po{1} - \Id \right) \psi \dx +\\
				&\qquad+ \intE \nabla \left(\uh - \Po{\k}\u \right) \cdot \left(\Po{\k-1} - \Id \right) \left(\diff \nabla \psi \right) \dx \\
			&\leq Ch^{r+1} \norm{\u}_{r+1,\E} \norm{\u - \uh}_{0,\E}.
	\end{align*}
	The bounds for the other components of the bilinear form in these differences are treated completely analogously. Consequently, we may combine these individual bounds to determine the optimal order bound in the statement of the theorem.
\end{proof}

\section{The Bilinear Forms}
\label{sec:bilinearForms}
In this section, we introduce a choice of the virtual element bilinear forms $\ahE$ and $\bhE$ that satisfy the abstract properties presented in Section~\ref{sec:vemFramework}. 
We remark that the bilinear forms that we pick are exactly the same regardless of whether we are considering the conforming or the nonconforming method.
Moreover, as described in Section~\ref{subsec:imple}, the implementation of the two methods differs only in the practical construction of the $\LTWO$-projection operators due to the 
different choice of the degrees of freedom.

\begin{definition}
	\label{def:admissibleStabilisingTerm}
	Let $\E\in\Th$. A computable (see \dref{def:computable}) bilinear form $\StaE:\VhE / \PE{\k}\times\VhE / \PE{\k}\rightarrow \Re$ is said to be a local \emph{admissible} stabilising bilinear form if it is symmetric, positive definite and it satisfies
	\begin{equation*}
		c_0\aE(\vh, \vh)\le\StaE(\vh, \vh)\le c_1\aE(\vh, \vh)\qquad\forall\vh\in \VhE / \PE{\k},
	\end{equation*}
	for some constants $c_0$ and $c_1$ independent of $\E$ and $h$.
\end{definition}

Given an admissible stabilising bilinear form $\StaE(\cdot, \cdot)$ and a computable projection $\stabProj{\k} : \VhE \rightarrow \PE{\k}$,  we simply define
\begin{equation}
	\ahE(\uh, \vh) := 
    (\diff \Po{\k-1} \nabla \uh, \Po{\k-1} \nabla \vh)_{\E} +
    (\reacSym \Po{\k} \uh, \Po{\k} \vh) 
    + \StaE((\Id - \stabProj{\k})\uh, (\Id - \stabProj{\k})\vh),
    \label{eq:ahE:def}
\end{equation}
and
\begin{equation}
	\bhE(\uh, \vh) := \frac{1}{2} \left[ (\conv \cdot \Po{\k-1}\nabla \uh, \Po{\k}\vh) - (\Po{\k} \uh, \conv \cdot \Po{\k-1} \nabla \vh)\right].
    \label{eq:bhE:def}
\end{equation}
It is clear that any bilinear form $\AhE$ resulting from~\eqref{eq:ahE:def} and~\eqref{eq:bhE:def} with $\StaE$ admissible satisfies Assumption~\ref{ass:bilinearForms}, cf.~\cite{BasicsPaper}.
Moreover, all of the terms in this bilinear form are computable since the construction and degrees of freedom of the space allow us to compute $\Po{\k}\vh$ and $\Po{\k-1}\nabla \vh$ for any $\vh \in \VhE$, while $\StaE$ and $\stabProj{\k}$ are computable by assumption.
The projection $\stabProj{\k}$ could be chosen in many ways.
A common choice would be to use an operator which is already computed, such as the $\LTWO(\E)$-orthogonal projection $\Po{\k}$ or the projection $\spaceProj{\k}$ used in the definition of the space.
Another option could be to choose $\stabProj{\k}$ in conjunction with the stabilising term in such a way as to incorporate important new features into the virtual element method, such as monotonicity, positivity and maximum/minimum principles for the numerical solutions, like the mimetic finite difference stabilising terms described in~\cite{MFD-Monotonicity}. This topic, however, is beyond the scope of the present paper and will be considered for future works.

We now introduce a choice of admissible stabilising bilinear form. This is essentially the one already used in~\cite{BasicsPaper}.
\begin{proposition}
	\label{prop:stabilisingTerms}
	Let $\overline{\diff}_\E$, $\overline{\nabla \cdot \conv}_{\E}$ and $\overline{\reac}_\E$ be some constant approximations 
        of $\diff$, $\nabla \cdot \conv$ and $\reacSym$ over $\E$ respectively.
	Then, the bilinear form
	\begin{align*}
		\StaE(\vh, \wh) &:= (\overline{\diff}_\E h_\E^{\spacedim-2} - \frac{1}{2} \overline{\nabla \cdot \conv}_{\E} h^{\spacedim - 1} + \overline{\reac}_\E h_\E^{\spacedim} ) \sum_{r=1}^{\NE} \dof_r(\vh) \dof_r(\wh), 
	\end{align*}
	for $\vh, \wh \in \VhE / \PE{\k}$, is admissible.
\end{proposition}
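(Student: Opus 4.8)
The plan is to check, in turn, the three requirements of Definition~\ref{def:admissibleStabilisingTerm}. Symmetry is immediate: the scalar prefactor $\lambda_\E := \overline{\diff}_\E h_\E^{\spacedim-2} - \tfrac12\overline{\nabla\cdot\conv}_\E h_\E^{\spacedim-1} + \overline{\reac}_\E h_\E^{\spacedim}$ is independent of the arguments, and $\sum_r \dof_r(\vh)\dof_r(\wh)$ is symmetric. Throughout I identify the quotient $\VhE/\PE{\k}$ with the $\LTWO(\E)$-orthogonal complement of $\PE{\k}$ in $\VhE$, so that the degrees of freedom, $\aE(\vh,\vh)$, and the norms below are all unambiguously defined on representatives, and I record that for such representatives the scaled Poincar\'e inequality $\norm{\vh}_{0,\E}\le C h_\E\abs{\vh}_{1,\E}$ holds (they are orthogonal to the constants).

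The crux is the following pair of spectral equivalences on this finite-dimensional space, with constants depending only on $\k$ and the mesh-regularity constant $\meshReg$:
\[
h_\E^{\spacedim-2}\sum_{r=1}^{\NE}\dof_r(\vh)^2 \;\simeq\; \abs{\vh}_{1,\E}^2
\qquad\text{and}\qquad
h_\E^{\spacedim}\sum_{r=1}^{\NE}\dof_r(\vh)^2 \;\simeq\; \norm{\vh}_{0,\E}^2 .
\]
I would prove these by the standard scaling argument: the affine map $\hat{\vx}=(\vx-\vx_\E)/h_\E$ leaves every degree of freedom of Definition~\ref{def:originalDofs} invariant (the vertex values are dimensionless, and the edge and interior moments carry the normalisations $1/\abs{\s}$ and $1/\abs{\E}$), while $\abs{\vh}_{1,\E}^2=h_\E^{\spacedim-2}\abs{\hat{v}}_{1,\hat{\E}}^2$ and $\norm{\vh}_{0,\E}^2=h_\E^{\spacedim}\norm{\hat{v}}_{0,\hat{\E}}^2$. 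On the reference configuration the three quantities $\sum_r\dof_r(\hat v)^2$, $\abs{\hat v}_{1,\hat\E}^2$ and $\norm{\hat v}_{0,\hat\E}^2$ are all genuine (squared) norms, since the complement of $\PE\k$ contains no nonzero polynomial and each of them vanishes only at $\hat v=0$; hence they are equivalent, and Assumption~\ref{ass:meshReg} confines the rescaled element shapes to a compact family, making the equivalence constants uniform over $\Th$.

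It then remains to compare $\StaE$ and $\aE$ through these equivalences. Ellipticity~\eqref{eq:diffEllipticity} and the bound~\eqref{eq:reacBound} give $\elipLower\abs{\vh}_{1,\E}^2 + \reacLower\norm{\vh}_{0,\E}^2 \le \aE(\vh,\vh)\le \elipUpper\abs{\vh}_{1,\E}^2 + \norm{\reacSym}_{\infty}\norm{\vh}_{0,\E}^2$, so $\aE(\vh,\vh)\simeq \overline{\diff}_\E\,\abs{\vh}_{1,\E}^2 + \overline{\reacSym}_\E\,\norm{\vh}_{0,\E}^2$ with $\overline{\reacSym}_\E:=\overline{\reac}_\E-\tfrac12\overline{\nabla\cdot\conv}_\E$ and, by the two equivalences above, $\aE(\vh,\vh)\simeq\bigl(\overline{\diff}_\E h_\E^{\spacedim-2} + \overline{\reacSym}_\E h_\E^{\spacedim}\bigr)\sum_r\dof_r(\vh)^2$. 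Comparing with $\StaE(\vh,\vh)=\lambda_\E\sum_r\dof_r(\vh)^2$, the proof reduces to showing that $\lambda_\E$ is positive and comparable to $\overline{\diff}_\E h_\E^{\spacedim-2}+\overline{\reacSym}_\E h_\E^{\spacedim}$: positivity and the lower bound follow from ellipticity ($\overline{\diff}_\E\ge\elipLower>0$) together with the sign information $\overline{\reacSym}_\E\ge0$ inherited from~\eqref{eq:reacBound}, and the upper bound from the boundedness of the coefficients; these also yield positive-definiteness once unisolvence of the degrees of freedom is invoked. Assembling the comparisons gives $c_0\aE(\vh,\vh)\le\StaE(\vh,\vh)\le c_1\aE(\vh,\vh)$ with $c_0,c_1$ depending only on the coefficient bounds, $\meshReg$ and $\k$.

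The main obstacle is the reference-element equivalence of the second paragraph, and in particular securing constants that do not degenerate across the elements of an arbitrary admissible mesh; this is exactly where Assumption~\ref{ass:meshReg} is essential, via the compactness of the family of rescaled element shapes, and it is the only non-elementary step. A subordinate point requiring care is the bookkeeping of the differing powers of $h_\E$ in $\lambda_\E$, where one must verify that the diffusion and reaction terms control the intermediate convection contribution so that $\lambda_\E$ stays comparable to the effective symmetric coefficient across the admissible range of mesh sizes.
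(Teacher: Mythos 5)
Your proposal rests on the same two pillars as the paper's own proof: the observation that $\StaE$ and $\aE$ both induce inner products on the finite-dimensional quotient $\VhE / \PE{\k}$, whose induced norms are therefore equivalent, and a scaling argument to make the equivalence constants independent of $h$. You supply details the paper omits: the explicit invariance of the degrees of freedom under $\hat{\vx} = (\vx - \vx_{\E})/h_{\E}$ together with the scalings $\abs{\vh}_{1,\E}^2 = h_{\E}^{\spacedim-2}\abs{\hat{v}}_{1,\hat{\E}}^2$ and $\norm{\vh}_{0,\E}^2 = h_{\E}^{\spacedim}\norm{\hat{v}}_{0,\hat{\E}}^2$, and a compactness argument under Assumption~\ref{ass:meshReg} to make the constants uniform over element shapes --- a point the paper's proof does not address at all (it argues only independence of the size $h$, not of the shape, although Definition~\ref{def:admissibleStabilisingTerm} demands both). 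Be aware, though, that the compactness step is itself delicate: $\VhE$ is defined through a boundary value problem on $\E$, so the space varies with the rescaled shape, and the implicit claim that the equivalence constants depend continuously on the shape needs justification; the rigorous route in the VEM literature goes through direct trace, inverse, and Poincar\'e estimates on star-shaped elements rather than through compactness.

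The one step that does not hold as written is your claim that positivity of $\lambda_{\E}$ and the lower bound $\lambda_{\E} \gtrsim \overline{\diff}_\E h_{\E}^{\spacedim-2} + \overline{\reacSym}_\E h_{\E}^{\spacedim}$ follow from ellipticity and the sign $\overline{\reacSym}_\E \geq 0$ inherited from~\eqref{eq:reacBound}. The middle term $-\tfrac12 \overline{\nabla \cdot \conv}_{\E}\, h_{\E}^{\spacedim-1}$ has no sign, and when $\overline{\nabla \cdot \conv}_{\E} > 0$ it dominates the reaction contribution, since $h_{\E}^{\spacedim-1} \gg h_{\E}^{\spacedim}$ for small $h_\E$. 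Writing $\lambda_{\E} = h_{\E}^{\spacedim-2}\bigl(\overline{\diff}_\E - \tfrac12\overline{\nabla \cdot \conv}_{\E}\, h_{\E} + \overline{\reacSym}_\E h_{\E}^2\bigr)$, one sees that the lower bound --- indeed even $\lambda_{\E} > 0$ --- requires a mesh-size restriction of the type $h_{\E} \lesssim \elipLower / \norm{\nabla\cdot\conv}_{\infty}$: for instance $\overline{\diff}_\E = 1$, $\overline{\reacSym}_\E = 0$, $\overline{\nabla \cdot \conv}_{\E} = 10$, $h_{\E} = 1$ gives $\lambda_{\E} < 0$, so $\StaE$ is then not even positive definite. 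You flag this difficulty in your closing paragraph but do not resolve it, and the resolution you gesture at (ellipticity plus the sign of the reaction) is not sufficient. In fairness, the paper's own proof makes exactly the same leap --- it asserts that the prefactor gives $\StaE$ ``the correct scaling'' without ever checking signs --- so your argument is faithful to, and more candid than, the published one; but a complete proof of admissibility needs either $\nabla \cdot \conv \leq 0$, a smallness condition on $h$, or a modified prefactor.
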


\begin{proof}
	The stabilising term $\StaE$ is an inner product over the finite dimensional space $\Re^{\NE - \dim(\PE{\k})}$ of vectors of degrees of freedom, which is isomorphic to $\VhE / \PE{\k}$.
	Since $\aE$ is an inner product on $\VhE$ and thus also on $\VhE / \PE{\k}$, the existence of the constants $c_0$ and $c_1$ of Definition~\ref{def:admissibleStabilisingTerm} follows from the equivalence of the norms induced by these inner products.
    
    The fact that these constants are independent of $h$ is due to the fact that $\StaE$ scales the same as $\aE$.
	It is clear that the $H^1$ part of $\aE$ scales like $\overline{\diff}_\E h^{\spacedim-2}$ while the $\LTWO$ term $((\reac - \frac{1}{2} \nabla \cdot \conv) \vh, \wh)_{\E}$ scales like $\overline{\reacSym}_\E h^{\spacedim} - \frac{1}{2} \overline{\nabla \cdot \conv}_{\E} h^{\spacedim - 1}$.
    Then, since the degrees of freedom are specifically chosen to scale like 1 (cf.~\cite{Hitchhikers}), the coefficient at the front of $\StaE$ ensures that the term has the correct scaling even when one of the coefficients $\diff$, $\nabla \cdot \conv$ or $\reacSym$ locally degenerates.
\end{proof}

\begin{remark}
	The stabilising term in Proposition~\ref{prop:stabilisingTerms} is just one of a family of admissible stabilising terms, defined as  appropriately scaled inner products on the subspace of the degrees of freedom relating to functions in $\VhE / \PE{\k}$.
	Here we have chosen the Euclidean inner product for simplicity.
\end{remark}

\subsection{The Effects of Numerical Integration}
\label{subset:effect:numerical:integration}
In any practical implementation of the method, the
coefficients $\diff, \conv$, and $\reacSym$, have to be approximated, meaning that the polynomial consistency properties of Assumption~\ref{ass:bilinearForms} will hold in an approximate way in general. 
One possibility, which we assess here, is to utilise numerical quadratures. Crucially, such \emph{numerical quadratures will only affect the consistency terms}.
These have precisely the same structure of standard finite elements terms (integral products of polynomial trial and test functions weighted by the coefficients), and as such the variational crime introduced by their approximation can be assessed using the classical finite element analysis.
Indeed, within this section we show that the actual implemented methods retain the stability and optimal accuracy properties of the theoretical virtual element methods proposed above, provided that the quadrature scheme used is of at least polynomial order $2\k-2$.
We emphasise that this is \emph{exactly the same requirement as for the classical finite element methods} used to solve the same problem (cf.~\cite{Ciarlet}).

In more concrete terms, suppose that we are  approximating integrals over the element $\E$ using a quadrature rule $Q^{\E}_{\quadDegree}$ of degree $\quadDegree$, so
\begin{equation*}
	\intE g \dx \approx Q^{\E}_{\quadDegree}(g) := \sum_{\ell = 1}^{L} \omega_{\ell} g(q_{\ell}),
\end{equation*}
for a finite set of quadrature points $\{q_{\ell}\}_{\ell=1}^{L}$ and associated weights $\{\omega\}_{\ell=1}^{L}$. Thus in practice, the implementation of the method will be based on the perturbed bilinear form
\begin{align}
\label{eqn:quadratureBF}
\begin{split}
	\ahEquad(\uh, \vh) 
    &:= \sum_{\ell = 1}^{L} \omega_{\ell} \,\diff(\ql) (\Po{\k-1} \nabla \uh)(\ql) \cdot (\Po{\k-1} \nabla \vh)(\ql) \\
    &\quad + \sum_{\ell = 1}^{L} \omega_{\ell} \,\reacSym(\ql) (\Po{\k}\uh)(\ql) \, (\Po{\k}\vh)(\ql) \\[0.5em] 
    &\quad + (\overline{\diff}_\E h_\E^{\spacedim-2} - \frac{1}{2} \overline{\nabla \cdot \conv}_{\E} h^{\spacedim - 1} +\overline{\reacSym}_\E h_\E^{\spacedim} ) 
    \sum_{r=1}^{\NE}\dof_r((\Id - \stabProj{\k})\uh) \dof_r((\Id - \stabProj{\k})\vh),
\end{split}
\end{align}
and similarly for the skew-symmetric part $\bhEquad$. Once more, we note that the use of quadrature only affects the consistency term. The following two theorems show that the use of an appropriate quadrature rule does not affect either the stability or the accuracy of the method.

\begin{theorem}
	\label{thm:quadratureStability}
	Suppose the quadrature scheme $Q^{\E}_{\quadDegree}$ with $\quadDegree \geq 2\k - 2$ has strictly positive weights and is exact for the space $\PE{2\k-2}$ and/or the set $\{q_{\ell}\}_{\ell=1}^{L}$ of quadrature points contains a $\PE{\k-1}$ unisolvent subset.
	Let $\Ahquad$ denote the bilinear form $\Ah$ with the polynomial consistency integrals approximated using the quadrature scheme $Q^{\E}_{\quadDegree}$.
	Then, $\Ahquad$ satisfies the stability property of Assumption~\ref{ass:bilinearForms}.
\end{theorem}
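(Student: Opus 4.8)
The plan is to leave the stabilisation term untouched---it is literally identical in $\ahEquad$ and $\ahE$, and in $\bhEquad$ and $\bhE$---and to show that replacing the two exact consistency integrals by their quadratures $Q^{\E}_{\quadDegree}$ produces a symmetric form that is still spectrally equivalent, with $h$- and $\E$-uniform constants, to the exact consistency form. Since the unquadratured form already satisfies $\diffStabLower\,\aE(\vh,\vh)\le\ahE(\vh,\vh)\le\diffStabUpper\,\aE(\vh,\vh)$ (the stability half of Assumption~\ref{ass:bilinearForms}, guaranteed for any admissible $\StaE$), establishing $\ahEquad(\vh,\vh)\asymp\ahE(\vh,\vh)$ immediately transfers the bounds, with modified constants, to $\ahEquad$. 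The whole argument therefore reduces to comparing, on the finite-dimensional polynomial spaces carrying the projections, the discrete quadrature seminorm $p\mapsto\sum_{\ell}\omega_{\ell}\abs{p(\ql)}^2$ with the continuous seminorm $\norm{p}_{0,\E}^2$.

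First I would dispose of the diffusion consistency term. Its integrand $\abs{\Po{\k-1}\nabla\vh}^2$ is a polynomial of degree $2\k-2$, so under the exactness hypothesis the quadrature reproduces $\norm{\Po{\k-1}\nabla\vh}_{0,\E}^2$ exactly; combined with the positive weights and the pointwise ellipticity bounds \eqref{eq:diffEllipticity} evaluated at the nodes $\ql$, this pins the quadratured diffusion term between $\elipLower$ and $\elipUpper$ times $\norm{\Po{\k-1}\nabla\vh}_{0,\E}^2$, exactly as for the unquadratured term. Under the alternative hypothesis, the $\PE{\k-1}$-unisolvent subset of nodes makes the discrete seminorm positive definite on $(\PE{\k-1})^{\spacedim}$; a scaling argument to a reference element, legitimate by the mesh-regularity Assumption~\ref{ass:meshReg}, then upgrades this to a two-sided equivalence with constants depending only on $\meshReg$ and $\k$.

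The reaction consistency term is the delicate point, and I expect it to be the main obstacle: its integrand $\abs{\Po{\k}\vh}^2$ has degree $2\k$, exceeding the guaranteed exactness degree $2\k-2$, so the quadrature is genuinely inexact here. The upper bound and the nonnegativity still follow at once from the positive weights and from $\reacSym\ge\reacLower\ge0$, together with the boundedness half of the discrete-versus-continuous seminorm equivalence on $\PE{\k}$ (which needs no unisolvency) and the $\LTWO$-stability of $\Po{\k}$. For the lower bound one must recover the exact reaction contribution to $\aE$ even on polynomials lying in the kernel of the discrete reaction seminorm. The key observation is that, when $\k\ge2$, exactness on $\PE{2\k-2}\supseteq\PE{\k}$ forces any such kernel polynomial $p$ to satisfy $\intE p\,\dx=Q^{\E}_{\quadDegree}(p)=0$; this zero-average property then permits a Poincar\'e--Wirtinger inequality on the star-shaped element to bound $\norm{p}_{0,\E}$ by $h_{\E}\norm{\nabla p}_{0,\E}$, so that the exactly integrated diffusion term---which controls $\norm{\nabla p}_{0,\E}$---absorbs the missing reaction part with a constant kept bounded by Assumption~\ref{ass:meshReg}. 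Under the unisolvency hypothesis the lower bound is obtained more directly after scaling, and this route also covers the remaining low-order case.

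Finally, the skew-symmetric requirements are immediate. The identity $\bhEquad(\vh,\vh)=0$ holds exactly and independently of the quadrature, since the two quadrature sums defining $\bhEquad$ coincide term by term when both arguments equal $\vh$, so the pointwise antisymmetry survives numerical integration. The bound $\bhEquad(\vh,\wh)\le\convContinuityConst\norm{\vh}_{1,\E}\norm{\wh}_{1,\E}$ then follows by applying the Cauchy--Schwarz inequality to each quadrature sum and invoking the discrete seminorm bounds already obtained for $\Po{\k-1}\nabla\vh$ and $\Po{\k}\wh$, together with the $\LTWO$-stability of the projections; the resulting constant depends only on $\norm{\conv}_{\infty}$, $\meshReg$ and $\k$, hence is independent of $h$ and $\E$ as required.
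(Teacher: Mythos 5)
Your handling of the diffusion consistency term, the reaction upper bound, and the skew-symmetric part is sound, but the lower bound for the reaction term --- which you yourself identify as the crux --- contains a genuine gap, and it breaks your overall plan of keeping the stabilisation ``untouched'' on the sidelines. In the absorption step you assert that the diffusion term controls $\norm{\nabla p}_{0,\E}$ for $p=\Po{\k}\vh$. It does not: both the quadratured and the exact diffusion terms control $\norm{\Po{\k-1}\nabla\vh}_{0,\E}$, and for a virtual function these are different objects, since
\begin{equation*}
\nabla \Po{\k}\vh-\Po{\k-1}\nabla\vh=\Po{\k-1}\nabla\big(\Po{\k}\vh-\vh\big)
\end{equation*}
vanishes only when $\vh$ is itself a polynomial of degree at most $\k$. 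Consequently the intermediate claim to which you reduce everything --- that the quadratured consistency part alone is spectrally equivalent to the exact consistency part --- is false. Nothing in the hypotheses prevents $\Po{\k-1}\nabla\vh=0$ while $p=\Po{\k}\vh$ is a nonconstant polynomial vanishing at every quadrature node: $p^2$ has degree $2\k$, beyond the exactness degree $2\k-2$, and node unisolvency is only assumed for $\PE{\k-1}$ (for $\k=1$, take the admissible one-point centroid rule and $p$ linear vanishing at the centroid; on elements with sufficiently many interfaces a dimension count produces such a $\vh$). For such a $\vh$ the quadratured consistency form is zero while the exact one equals $\intE\reacSym p^2\ge\reacLower\norm{p}_{0,\E}^2>0$, so no uniform equivalence constant exists. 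Note that such a $\vh$ is necessarily non-polynomial (a polynomial with $\Po{\k-1}\nabla\vh=\nabla\vh=0$ is constant, and a nonzero constant cannot vanish at all nodes), which is precisely why the stabilisation cannot be excluded from the comparison.

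The paper's proof is organised around exactly this point: instead of comparing consistency parts, it performs a kernel analysis of the \emph{full} form $\ahEquad$. If $\ahEquad(\vh,\vh)=0$, the positive weights and the ellipticity of $\diff$ force $\Po{\k-1}\nabla\vh=0$, whence either $\vh\in\PE{0}$ --- and then the quadratured reaction term, with strictly positive weights and $\reacSym$ bounded below, forces $\vh=0$ --- or $\vh$ is non-polynomial --- and then $\StaE((\Id - \stabProj{\k})\vh, (\Id - \stabProj{\k})\vh)>0$ gives a contradiction. Positive definiteness together with norm equivalence on the finite-dimensional space $\VhE$ (and a scaling argument for $h$-uniformity) then yields the stability property. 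Your argument can be repaired in the same spirit, but only by letting the stabilisation enter the lower bound: for instance, bound $\abs{\Po{\k}\vh}_{1,\E}\le\norm{\Po{\k-1}\nabla\vh}_{0,\E}+\abs{\vh-\Po{\k}\vh}_{1,\E}$ and control the second term through $\StaE$ (immediate when $\stabProj{\k}=\Po{\k}$, requiring an additional argument otherwise); the equivalence you then obtain is between the full forms $\ahEquad$ and $\ahE$, not between their consistency parts.
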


\begin{proof}
	We first wish to show that the bilinear form $\ahEquad$ defines a norm on $\VhE$, or on $\VhE / \PE{0}$ when $\reacSym \equiv 0$, equivalent to the norm imposed by $\aE$ in either case.
	Suppose $\reacSym \not\equiv 0$.
	Then $\ahEquad$ is clearly already a semi-norm and all that remains to be shown is that $\ahEquad(\vh, \vh) = 0 \Rightarrow \vh = 0$.
	
	Let $\ahEquad(\vh, \vh) = 0$.
	Then, we must have 
	\begin{equation*}
		Q^{\E}_{\quadDegree} \big((\Po{\k-1} \nabla \vh) \cdot (\diff\Po{\k-1} \nabla \vh)\big) = 0.
	\end{equation*}
	By the assumptions on $Q^{\E}_{\quadDegree}$ and the strong ellipticity of $\diff$ (see~\eqref{eq:diffEllipticity}), this implies that $\Po{\k-1} \nabla \vh$ = 0, and consequently we may deduce that either (a), $\vh \in \PE{0}$ or (b), $\vh \in \VhE / \PE{\k}$ where, with a slight abuse of notation, we associate $\VhE / \PE{\k}$ with the non-polynomial subspace of $\VhE$.
	
	Suppose case (a) holds, so $\vh \in \PE{0}$.
	Then we also have 
	\begin{equation*}
		0 = Q^{\E}_{\quadDegree}(\reacSym (\Po{\k}\vh)^2) = Q^{\E}_{\quadDegree}(\reacSym \vh^2),
	\end{equation*}
	since $\Po{\k}$ is the identity on $\PE{0}$, and thus we deduce that $\vh \equiv 0$.
	
	Alternatively, suppose that case (b) holds, so $\vh \in \VhE / \PE{\k}$.
	Then, since $\ahEquad(\vh, \vh) = 0$, it follows that 
	\begin{equation*}
		\StaE((\Id - \stabProj{\k})\vh, (\Id - \stabProj{\k})\vh) = 0,
	\end{equation*}
	and we may deduce that $\vh \equiv 0$.
	
	From this, we may conclude that $(\ahEquad(\cdot, \cdot))^{\frac{1}{2}}$ is a norm on $\VhE$.
	Moreover, $(\aE(\cdot, \cdot))^{\frac{1}{2}}$ is also a norm on $\VhE$ and since this is a finite dimensional subspace of $H^1(\E)$, the resulting norms are equivalent.
	As with the bilinear form $\ahE$, the constants in the equivalence are independent of $h$ due to the correct scaling of $\ahEquad$.
	
	On the other hand, when $\reacSym \equiv 0$ we find that $(\ahEquad(\cdot, \cdot))^{\frac{1}{2}}$ and $(\aE(\cdot, \cdot))^{\frac{1}{2}}$ are both norms on $\VhE / \PE{0}$ and both zero on $\PE{0}$.
	Again, the fact that $\VhE / \PE{0}$ is finite dimensional allows us to deduce that the two norms are equivalent.
	
	The stability property for $\bhEquad$ also holds because $\bhEquad(\vh, \vh) = \bhE(\vh, \vh) = 0$ and it is straightforward to check that
    $\bhEquad(\uh, \vh) \leq C\norm{\conv}_{\infty} \norm{\uh}_{1,\E} \norm{\vh}_{1,\E}$.
\end{proof}

The next result addresses the questions about the accuracy of the method when the quadrature is employed to evaluate the integrals of the virtual bilinear forms, and should be compared with Theorem~\ref{thm:h1ErrorBound}.
\begin{theorem}
	\label{thm:quadratureAccuracy}
	Suppose that Assumptions~\ref{ass:vem}-\ref{ass:meshReg} are
        satisfied. Let $\k \geq 1$ be a positive integer and let $\u
        \in H^{s+1}(\D)$ be the true solution to
        problem~\eqref{eq:origVariationalForm} for some positive
        integer $s$. Define $r = \min(\k, s)$ and suppose that the
        coefficients $\diff, \conv, \reacSym \in
        W^{r+1,\infty}(\Omega)$, satisfying \eqref{eq:diffEllipticity}
        and \eqref{eq:reacBound}. Suppose that the right-hand side
        function $\force \in H^{r-1}(\D)$ is approximated by 
        $\forceh := \Po{\max(\k-2, 0)} \force$.
	
	Suppose that the quadrature scheme $Q^{\E}_{\quadDegree}$ with $m \geq 2\k-2$ is exact for the space $\PE{2\k-2}$.
	Let $\Ahquad$ denote the virtual element bilinear form $\Ah$ obtained by approximating the integrals using the quadrature scheme $Q^{\E}_{\quadDegree}$ 
    and $\uhquad \in \Vh$ be the solution obtained from this scheme.
	Then, there exists a positive constant $C$, independent of $h$ and $\u$ such that
	\begin{equation*}
		\norm{\u - \uhquad}_{1,h} \leq C h^r(1+h)\norm{\u}_{r+1}+C h^r \norm{\force}_{r-1}.
	\end{equation*}
\end{theorem}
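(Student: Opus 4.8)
The plan is to observe that introducing quadrature alters nothing structural in the analysis. By Theorem~\ref{thm:quadratureStability}, the perturbed form $\Ahquad$ retains the coercivity and continuity recorded in the stability half of Assumption~\ref{ass:bilinearForms}; since the proof of the abstract estimate of Theorem~\ref{thm:vemStrang} uses only coercivity and continuity of the discrete form (the polynomial consistency never enters that proof — indeed the possible lack of consistency is exactly what the term $\abs{\AE(p,\wh)-\AhE(p,\wh)}$ on its right-hand side is designed to absorb), the bound \eqref{eq:vemStrang} holds verbatim for $\uhquad$ with $\AhE$ replaced by the elementwise quadrature form $\AhE^{\mathrm{q}}$. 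I would therefore estimate the four contributions in \eqref{eq:vemStrang} one at a time. Every term except the polynomial-consistency term is literally unchanged from the proof of Theorem~\ref{thm:h1ErrorBound}: the best-approximation term is controlled by the interpolant of Theorem~\ref{thm:spaceApproximation}; the data term uses the same approximation $\forceh=\Po{\max(\k-2,0)}\force$ and gives $Ch^r\norm{\force}_{r-1}$; and the nonconformity term involves only the continuous data and is bounded by Lemma~\ref{lem:strangNonconformityTerm} by $Ch^r\norm{\u}_{r+1}$, vanishing in the conforming case.

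The only genuinely new work is the consistency term, which I would handle by inserting the \emph{exact} virtual form. Choosing $p=\Po{\k}\u$ (so that $\norm{\u-p}_{1,h}\le Ch^r\norm{\u}_{r+1}$ by Theorem~\ref{thm:polynomialApproximation}), I write on each element
\[
  \AE(p,\wh)-\AhE^{\mathrm{q}}(p,\wh) = \big(\AE(p,\wh)-\AhE(p,\wh)\big) + \big(\AhE(p,\wh)-\AhE^{\mathrm{q}}(p,\wh)\big).
\]
The first bracket is exactly the consistency error already bounded in Theorem~\ref{thm:h1ErrorBound} by $Ch_\E^{r}\norm{\u}_{r+1,\E}$. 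Because the stabilising term $\StaE((\Id-\stabProj{\k})\,\cdot\,,(\Id-\stabProj{\k})\,\cdot\,)$ is evaluated through the degrees of freedom and is untouched by the quadrature, the second bracket reduces to the pure quadrature error $\intE(\cdot)\dx - Q^{\E}(\cdot)$ acting on the two consistency integrands $\diff\,\nabla p\cdot\Po{\k-1}\nabla\wh$ and $\reacSym\,p\,\Po{\k}\wh$, together with the two analogous convective integrands coming from $\bhE$.

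I would bound these remaining quadrature errors using the classical finite-element theory of numerical integration (cf.~\cite{Ciarlet}), which is legitimate precisely because the integrands are products of the $W^{r+1,\infty}$ coefficients with polynomials. For the diffusion integrand one has $\nabla p\in(\PE{\k-1})^{\spacedim}$ and $\Po{\k-1}\nabla\wh\in(\PE{\k-1})^{\spacedim}$, so their product lies in $\PE{2\k-2}$ and is integrated exactly; the error stems solely from the non-polynomial part of $\diff$, and a Bramble--Hilbert argument for the functional $\intE(\cdot)\dx-Q^{\E}(\cdot)$ (which annihilates $\PE{2\k-2}$), combined with inverse inequalities on the polynomial factors licensed by the mesh regularity of Assumption~\ref{ass:meshReg}, yields $Ch_\E^{r}\norm{\u}_{r+1,\E}\norm{\wh}_{1,\E}$. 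The reaction integrand $\reacSym\,p\,\Po{\k}\wh$ has degree $2\k$, overshooting the exactness threshold by two, so the same estimate sacrifices one power of $h$ and contributes at order $h_\E^{r+1}$ — this is the origin of the $(1+h)$ factor. The convective integrands are treated identically and sit at order $h_\E^{r}$. Summing over elements, dividing by $\norm{\wh}_{1,\E}$, and collecting with the previous three terms gives $Ch^r(1+h)\norm{\u}_{r+1}+Ch^r\norm{\force}_{r-1}$.

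The main obstacle is this last quadrature-error step. The delicate point is to recover the full order $h_\E^{r}$ from a rule exact only on $\PE{2\k-2}$ while the coefficients enjoy merely $W^{r+1,\infty}$ regularity: the derivatives must be distributed with care between the coefficient and the polynomial factors in the Bramble--Hilbert estimate, and the scaling to a reference element, with its attendant inverse inequalities, must be tracked so that the powers of $h_\E$ balance correctly. The remaining subtlety is bookkeeping — verifying that the lower-order reaction and convective integrands, whose polynomial degree exceeds the exactness degree, cost at most the single extra power of $h$ already recorded by the $(1+h)$ factor in the statement.
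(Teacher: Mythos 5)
Your proposal is correct and takes essentially the same route as the paper: both apply the Strang-type bound of Theorem~\ref{thm:vemStrang} to the quadrature form (with stability supplied by Theorem~\ref{thm:quadratureStability}), reuse the bounds of Theorem~\ref{thm:h1ErrorBound} for every term except consistency, isolate via a triangle inequality the single new term $\AhE(\Po{\k}\u,\wh)-\AhEquad(\Po{\k}\u,\wh)$, and estimate it by the classical numerical-integration theory of~\cite{Ciarlet} (Theorem~4.1.4 there). The only difference is cosmetic: you spell out the Bramble--Hilbert and scaling details that the paper delegates to the citation.
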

\begin{proof}
	Expanding as in the proof of Theorem~\ref{thm:vemStrang}, it may be shown that the only extra term depending on the quadrature scheme which arises in the abstract error bound is
	\begin{equation*}
		\sum_{\E \in \Th}\sup_{\wh \in \VhE} \frac{\abs{\AhE(\uProj, \wh) - \AhEquad(\uProj, \wh)}}{\norm{\wh}_{1,\E}},
	\end{equation*}
	where $\uProj := \Po{\k}\u$.
	As usual, we split this term into the different components of the bilinear form and bound them separately.
	We give here the bound for $\ahE(\uProj, \wh) - \ahEquad(\uProj, \wh)$; the bound for the skew-symmetric term follows analogously.
	Since the stabilising term is unaffected by the quadrature, we only need to bound
	\begin{align*}
		&\sup_{\wh \in \VhE} \frac{\abs{Q^{\E}_{\quadDegree} ((\diff \nabla \uProj) \cdot \Po{\k-1} \nabla \wh) 
          - \displaystyle\intE (\diff \nabla \uProj) \cdot \Po{\k-1} \nabla \wh \dx}}{\norm{\wh}_{1,\E}}
        \\
        &\qquad+
        \sup_{\wh \in \VhE} \frac{\abs{Q^{\E}_{\quadDegree} ((\reacSym \uProj) \cdot \Po{\k}\wh) 
        - \displaystyle\intE (\reacSym \uProj) \cdot \Po{\k} \wh \dx}}{\norm{\wh}_{1,\E}}
	\end{align*}
	Arguing as in Theorem~4.1.4 in~\cite{Ciarlet} and using the stability of the $\LTWO(\E)$ projector, we find that
	\begin{align*}
		\abs{Q^{\E}_{\quadDegree} ((\diff \nabla \uProj) \cdot \Po{\k-1} \nabla \wh) - \intE (\diff \nabla \uProj) \cdot \Po{\k-1} \nabla \wh \dx}
			&\leq Ch^r \abs{\u}_{r+1,\E} \norm{\wh}_{1,\E},
	\end{align*}
    and 
    \begin{align*}
		\abs{Q^{\E}_{\quadDegree} ((\reacSym \uProj) \cdot \Po{\k} \wh) - \intE (\reacSym \uProj) \Po{\k} \wh \dx}
			&\leq Ch^r \abs{\u}_{r,\E} \norm{\wh}_{1,\E}.
	\end{align*}
	The theorem then follows by treating the skew-symmetric term similarly and combining the result with the original $H^1$-norm bound of Theorem~\ref{thm:h1ErrorBound}.
\end{proof}

A similar analysis can be carried out to control the error in the
numerical approximation $\forceh$ of the forcing function $\force$,
and to recover the optimal order of convergence in the $\LTWO$-norm, see always~\cite{Ciarlet}.

\section{Implementation}
\label{subsec:imple}

To fix the definition of the space $\VhE$, we must first define the computable projection $\spaceProj{\k} : \biglocalspace /_{\sim} \rightarrow \PE{\k}$. The original approach in~\cite{EquivalentProjectors}  is to contruct a particular projector, in that case an elliptic projection therein denoted by $\Pnk$. 
Here instead we first construct the general family of projections and subsequently fix a simple choice.

A projection $\spaceProj{\k}:\biglocalspace /_{\sim}\rightarrow \PE{\k}$ may be defined using any computable inner product 
$\bilBP{\cdot}{\cdot}$ on $\biglocalspace /_\sim \PE{\k}$.  For any $\vh \in \biglocalspace /_{\sim}$, we define the polinomial $\spaceProj{\k} \vh \in \PE{\k}$ as the solution of
\begin{equation}
	\bilBP{\spaceProj{\k} \vh}{\ma} = \bilBP{\vh}{\ma} \qquad \forall \ma \in \PE{\k}.
	\label{eq:abstractProjectionProblem}
\end{equation}

Let $\{\bigbasis_i\}_{i=1}^{\NE}$ be the Lagrangian basis functions of $\biglocalspace /_{\sim}$ with respect to the original degrees of freedom in \dref{def:originalDofs}, and define the matrix $\matD$ such that $\matD_{i \alpha} = \dof_{i}(\ma)$.
Since $\biglocalspace /_{\sim}$ is finite dimensional, $\bilBP{\cdot}{\cdot}$ can be written as the symmetric positive definite matrix $\matB = (\bilBP{\bigbasis_i}{\bigbasis_j})$.
From this, it can also be seen that $(\bilBP{\ma}{\bigbasis_i}) = \matD^T \matB$ and $(\bilBP{\ma}{\m_{\beta}}) = \matD^T \matB \matD$.

Define the action of $\spaceProj{\k}$ on the shape functions $\{\psi_i\}_{i = 1}^{\NE}$ through the matrix $\matPiBkP$, where
\begin{align*}
	\spaceProj{\k} \bigbasis_i=\sum_{\alpha=1}^{N_{d,k}}\ms_{\alpha}(\matPiBkP)_{\alpha i},
\end{align*}
so that the $j$-th column of $\matPiBkP$ contains the coefficients of the expansion of the polynomial $\spaceProj{\k} \psi_j$ in the monomial basis $\{\ms_{\alpha}\}_{\alpha=1}^{N_{d,k}}$.
Consequently, the projection problem~\eqref{eq:abstractProjectionProblem} can be written in the matrix form
\begin{align*}
  \matPiBkP = \big(\matD^T\matB\matD\big)^{-1} \matD^T\matB
\end{align*}
The matrix $\matD^T\matB\matD$ is invertible due to the assumption that $\bilBP{\cdot}{\cdot}$ is an inner product on $\biglocalspace /_{\sim}$.

Finally, we have to choose the bilinear form $\mathcal{B}$ to fix the projection.
This is equivalent to picking any symmetric positive definite matrix $\matB$. Here we choose $\matB = \Id$, yielding the simple choice
\begin{equation}\label{eq:Dproj}
 \matPiBkP = \big(\matD^T\matD\big)^{-1} \matD^T.
\end{equation}

Having thus chosen the projection $\spaceProj{\k}$ and therefore the space $\VhE$, we can introduce the Lagrangian basis $\{\uphi_i\}_{i=1}^{\NE}$ of $\VhE$ associated with the degrees of freedom in \dref{def:originalDofs}.
These shape functions are necessarily different for the conforming and the nonconforming methods, although the implementation of the two methods is formally the same since we are only concerned with the degrees of freedom.

We shall determine the local matrix $\AhE(\uphi_i,\uphi_j)$ and right-hand side vector $(\forceh, \uphi_j)_\E$ associated to the Lagrangian basis introduced above. To this end, we first need to evaluate the projections $\Po{\k} \uphi_i$ and $\Po{\k-1} \nabla \uphi_i$, for all $i=1,\dots,\NE$.

The polynomial $\Pzk \phi_i$ is the solution of the projection problem:
\begin{align}
  \big(\upol_{\alpha},\Pzk \uphi_i\big)_{\E} &= (\upol_{\alpha}\,\uphi_{i})_{\E}
  \qquad\forall\alpha=1,\ldots,N_{d,k}.
  \label{eq:L2:proj:problem}
\end{align}
Since $\Pzk \uphi_{i} $ is both a polynomial of degree $\k$ and a function in the
virtual element space $\VhE$, it can be expanded on the monomials
generating $\PE{\k}$
and the shape functions generating $\VhE$ as
\begin{align*}
  \Pzk \uphi_{i} 
  =\sum_{\alpha=1}^{N_{d,k}} \upol_{\alpha}\big(\matPzk\big)_{\alpha i}
  =\sum_{j=1}^{\NE} \uphi_{j}\big(\matPz\big)_{ji},
\end{align*}
and the coefficients of these expansions are collected in the matrices
$\matPzk$ and $\matPz$, respectively.
The matrix $\matPz$ will be used at the end of the subsection to compute
the stabilising term.
By comparison, it follows that
$\matPz=\matD\matPzk$.

We reformulate the projection problem~\eqref{eq:L2:proj:problem} in
matrix form as $\matH\matPzk=\matC$, where the coefficients of
matrices $\matH$ and $\matC$ are given by
\begin{align*}
  \matC_{\alpha i} = (\upol_{\alpha},\uphi_{i})_{\E}
  \qquad\textrm{and}\qquad
  \matH_{\alpha\beta} = (\upol_{\alpha}\,\upol_{\beta})_{\E}
\end{align*}
for $\alpha=1,\ldots,N_{d,k}$ and $i=1,\ldots,\NE$.
Since the space $\VhE$ does not use the extra degrees of freedom in \dref{def:extraDofs}, the matrix $\matC$ must be constructed in two parts according to the definition of the space.
Then, we have 
\begin{align}
  \matC_{\alpha i} =
  \begin{cases}
    (\upol_{\alpha},\uphi_{i})_{\E}
    & \textrm{if~}\ms_\alpha\in\ME{\k-2},\\[1em]
    ( \upol_{\alpha}, \spaceProj{\k} \uphi_{i})_{\E} & 
    \textrm{if~}\ms_\alpha\in\MEstar{\k-1}\cup\MEstar{\k}.
  \end{cases} 
  \label{eq:matC:def-general}
\end{align}
With the choice of $\spaceProj{\k}$ presented above, $\matC$ becomes
\begin{align}
  \matC_{\alpha i} =
  \begin{cases}
    (\upol_{\alpha},\uphi_{i})_{\E}
    & \textrm{if~}\ms_\alpha\in\ME{\k-2},\\[1em]
    \big( \matH(\matD^T\matD)^{-1}\matD^T \big)_{\alpha i} & 
    \textrm{if~}\ms_\alpha\in\MEstar{\k-1}\cup\MEstar{\k},
  \end{cases} 
  \label{eq:matC:def}
\end{align}
where the moments of $\phi_i$ are simply degrees of freedom, so $\matC$ is fully computable.

The other crucial term which must be computed is $\Po{\k-1}\nabla\uphi_{i}$, where the projection is defined componentwise such that
\begin{align*}
  (\Pzkk\nabla\uphi_{i}, \bm{\ma})_{\E} &= (\nabla\uphi_{i}, \bm{\ma})_{\E}
  \qquad\forall \bm{\ma} \in (\ME{\k-1})^2 \notag \\
  	&= \intdE \bm{\ma} \cdot \n \uphi_{i} \ds - (\uphi_{i}, \nabla \cdot \bm{\ma})_{\E}.
\end{align*}
The second term on the right-hand side of this expression is simply a combination of the internal degrees of freedom of $\uphi_{i}$.
For the nonconforming method, or for the conforming method when $\spacedim = 2$, the first term is also just a combination of edge degrees of freedom of $\uphi_{i}$.
However, for the conforming method when $\spacedim = 3$, we must compute this term using the $\LTWO$-orthogonal projection of $\uphi_i$ on each face $s \subset \dE$ as
\begin{equation*}
	\intdE \bm{\ma} \cdot \n \uphi_{i} \ds = \sum_{\s \subset \dE} \ints \bm{\ma} \cdot \n \Pos{\k}\uphi_{i} \ds.
\end{equation*}
The projection $\Pos{\k}$ can be computed on each face of $\E$ exactly as when $\spacedim = 2$.

Thus, we end up with $\spacedim$ linear systems to compute the projection of the $\spacedim$ components of $\Po{\k-1}\nabla\uphi_{i}$, namely $G\matPox{\k-1}{\ell} = R^{x_{\ell}}$ where $G_{\alpha \beta} = (\ma, \m_{\beta})_{\E}$ with $\ma, \m_{\beta} \in \ME{\k-1}$ 
and
\begin{equation*}
	(R^{x_{\ell}})_{\alpha i} = \sum_{\s \subset \dE} \ints \ma n_{\ell} \uphi_{i} \ds - (\uphi_{i}, \frac{\partial\ma}{\partial{x_{\ell}}} )_{\E},
\end{equation*}
for $\ell = 1,\dots,\spacedim$, where $n_{\ell}$ is the $\ell$-th component of $\n$.
Then, we have that
\begin{align*}
  \Pzkk\frac{\partial\uphi_i}{\partial x_{l}}
  = \sum_{\alpha=1}^{N_{d,k-1}}\upoh_{\alpha}\big(\matPox{\k-1}{\ell} \big)_{\alpha i}.
\end{align*}

Having the matrices $\matPzk$ and $\matPox{\k-1}{\ell}$, $l=1,\dots,d$, we are able to easily implement all the terms in the local bilinear form $\AhE$.
Indeed, for the term of $\ahE$ that contains the diffusion
coefficient $\diff_{|_{\ell n}}$ we have:
\begin{align*}
  \Big( \diff_{|_{\ell n}}\Pzkk\frac{\partial\uphi_i}{\partial x_\ell},
  \Pzkk\frac{\partial\uphi_j}{\partial x_n} \Big)_{\E}
  =
  \sum_{\alpha,\beta=1}^{N_{d,k-1}}
  (\diff_{|_{\ell n}}\,\upoh_{\alpha},\,\upoh_{\beta})_{\E}
  \big(\matPox{\k-1}{\ell} \big)_{\alpha i}
  \big(\matPox{\k-1}{n} \big)_{\beta j}
  \qquad \ell,n=1,\dots,d.
\end{align*}
For the reaction term in $\ahE$, we easily find that:
\begin{align*}
  (\reacSym\Pzk\uphi_i,\Pzk\uphi_j)_{\E}
  =\sum_{\alpha,\beta=1}^{N_{d,k}}
  (\reacSym\,\upol_{\alpha},\upol_{\beta})_{\E}
  \big(\matPzk\big)_{\alpha i}
  \big(\matPzk\big)_{\beta j}.
\end{align*}
For the skew-symmetric bilinear form $\bhE$, first notice that  
$  \conv\cdot\Pzkk\nabla\uphi_j =
  \sum_{l=1}^{d}
  \beta_l\Pzkk\frac{\partial\uphi_j}{\partial x_l},
$
where $\beta_l$ is the $l$-th component of $\conv$.
Therefore, the first term of $\bhE$ is given by:
\begin{align*}
  \int_{\E}\conv\Pzk\uphi_i\cdot\Pzkk\nabla\uphi_j\dV 
=\sum_{l=1}^{d}\sum_{\alpha=1}^{N_{d,k}}\sum_{\beta=1}^{N_{d,k-1}} (\beta_l\,\upoh_{\alpha},\,\upoh_{\beta})_{\E}
  \big(\matPzk\big)_{\alpha i}
  \big(\matPox{\k-1}{l} \big)_{\beta j},
\end{align*}
and a similar expression is found for the second term by
exchanging $i$ and $j$. 

Finally, according to the expressions given in
Proposition~\ref{prop:stabilisingTerms}, the stabilising term is given
by
\begin{align*}
  \StaE((\Id - \Po{\k})\uphi_i, (\Id - \Po{\k})\uphi_j) =
  \big(\overline{\diff}_\E h_\E^{\spacedim-2} - \frac{1}{2} \overline{\nabla \cdot \conv}_{\E} h^{\spacedim - 1} + \overline{\reac}_\E h_\E^{\spacedim} \big) 
  \left( \big(\matI-\matPz\big)^T\big(\matI-\matPz\big) \right)_{ij}
\end{align*}
since 
$\dof_r((\Id - \Po{\k})\uphi_i)=\big(\matI-\matPz\big)_{ir}$.

Similarly, since we have the projector $\Po{\k}$ at our disposal, we might as well use  $\forceh:=\Po{\k}\force|_\E$ to approximate $\force|_\E$. In this case the right-hand side vector is given by:
\begin{align}\label{eq:VEMrhs}
(\forceh, \uphi_j)_\E=\intE\Po{\k}\force\uphi_j\dV=\intE\force\Po{\k}\uphi_j\dV
=\sum_{\alpha=1}^{N_{d,k}} (\force,\upol_{\alpha})_\E\big(\matPzk\big)_{\alpha i}.
\end{align}
In practice, the $\LTWO$-products on the right-hand side of the above formulas have to be somehow approximated in accordance with the theory presented in Section~\ref{subset:effect:numerical:integration}.

\section{Numerical Results}
\label{sec:numerics}

\newcommand{\MeshONE}  {$\mathcal{M}_1$}
\newcommand{\MeshTWO}  {$\mathcal{M}_2$}
\newcommand{\MeshTHREE}{$\mathcal{M}_3$}
\newcommand{\HAT}[1]{\widehat{#1}}

All the numerical experiments presented in this section are obtained 
using~\eqref{eq:VEMrhs} for the approximation of the right-hand side, the choice given by~\eqref{eq:Dproj} for $\spaceProj{\k}$,  setting $\stabProj{\k}=\Po{\k}$, and using~\eqref{eq:matC:def} for the definition of matrix $\matC$.
However, a comparison with the implementation using
$\stabProj{\k}=\Pnk$ and $\spaceProj{\k}=\Pnk$ (cf.~\cite{EquivalentProjectors})
in~\eqref{eq:matC:def-general} did not reveal any significant 
difference in the behaviour of the method.

The numerical experiments  are aimed to
confirm the \emph{a priori} analysis developed in the previous
sections.
In a preliminary stage, the consistency of both the conforming and
nonconforming VEM, i.e. the exactness of these methods for
polynomial solutions, has been tested numerically by solving the
elliptic equation with boundary and source data determined by
$\u(x,y)=x^m+y^m$ on different set of polygonal meshes and for $m=1$
to $4$.
In all the cases, we measure an error whose magnitude is of the order
of the arithmetic precision, thus confirming this property.

To study the accuracy of the method we solve the
convection-reaction-diffusion equation on the domain
$\Omega=]0,1[\times]0,1[$.
The variable coefficients of the equation are given by
\begin{align*}
  &\diff(x,y) = \left(
    \begin{array}{cc}
      1+y^2 & -xy\sin(2\pi x)\sin(2\pi y) \\
      -xy\sin(2\pi x)\sin(2\pi y) & 1+x^2
    \end{array}
  \right),\\[1em]
  &\conv(x,y) = \left(
    \begin{array}{c}
      -2 \, ( x    + 2y^2 - 1 ) \\
      3  \, ( 3x^2 - 2y   + 3 )  
    \end{array}
  \right),
  \qquad 
  \reac(x,y) = x^2 + y^3 + 1.
\end{align*}
The forcing term and the Dirichlet boundary condition are set in
accordance with the exact solution
\begin{align*}
  \u(x,y) = \sin(2\pi x)\sin(2\pi y) + x^5 + y^5.
\end{align*}

The performance of the methods presented above are investigated by evaluating the rate of
convergence on three different sequences of five meshes, labeled
by~\MeshONE{}, \MeshTWO{} and \MeshTHREE{}, respectively.
The top panels of Fig.~\ref{fig:meshes} show the first mesh of each
sequence and the bottom panels show the mesh of the first refinement.
\begin{figure}[t]
  \centering
  \begin{tabular}{ccc}
    \includegraphics[width=0.32\textwidth]{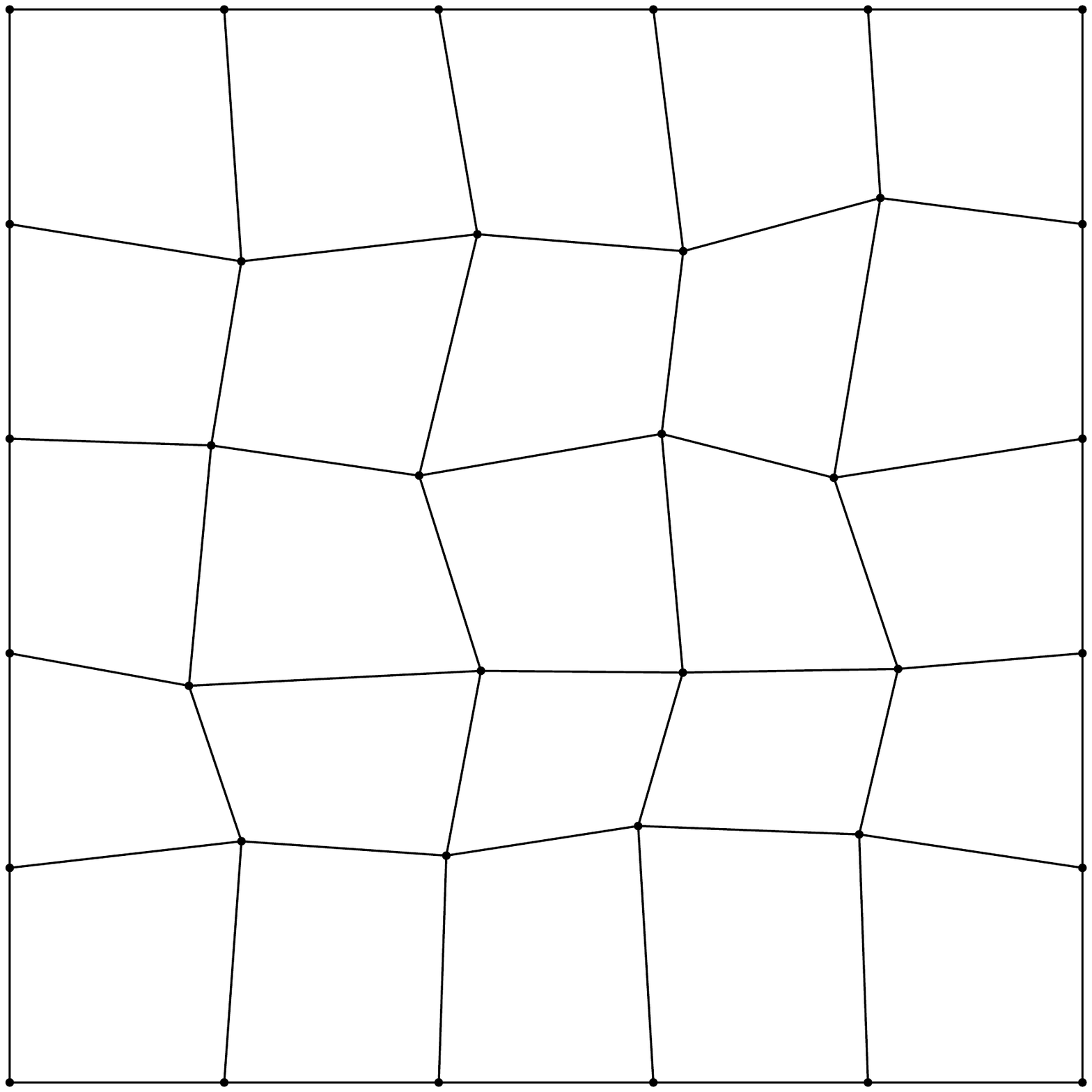}&\hspace{-0.25cm}
    \includegraphics[width=0.32\textwidth]{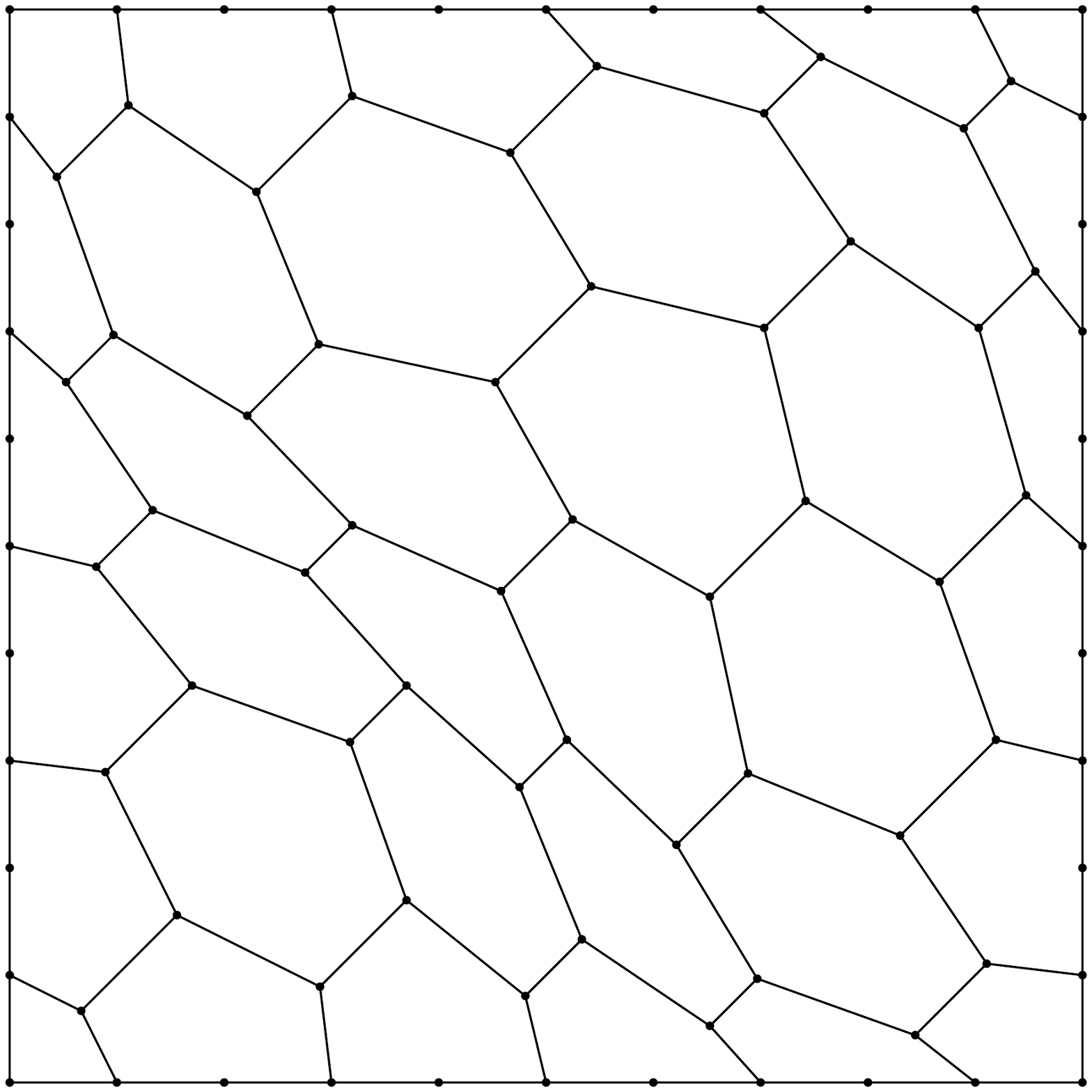}&\hspace{-0.25cm}
    \includegraphics[width=0.32\textwidth]{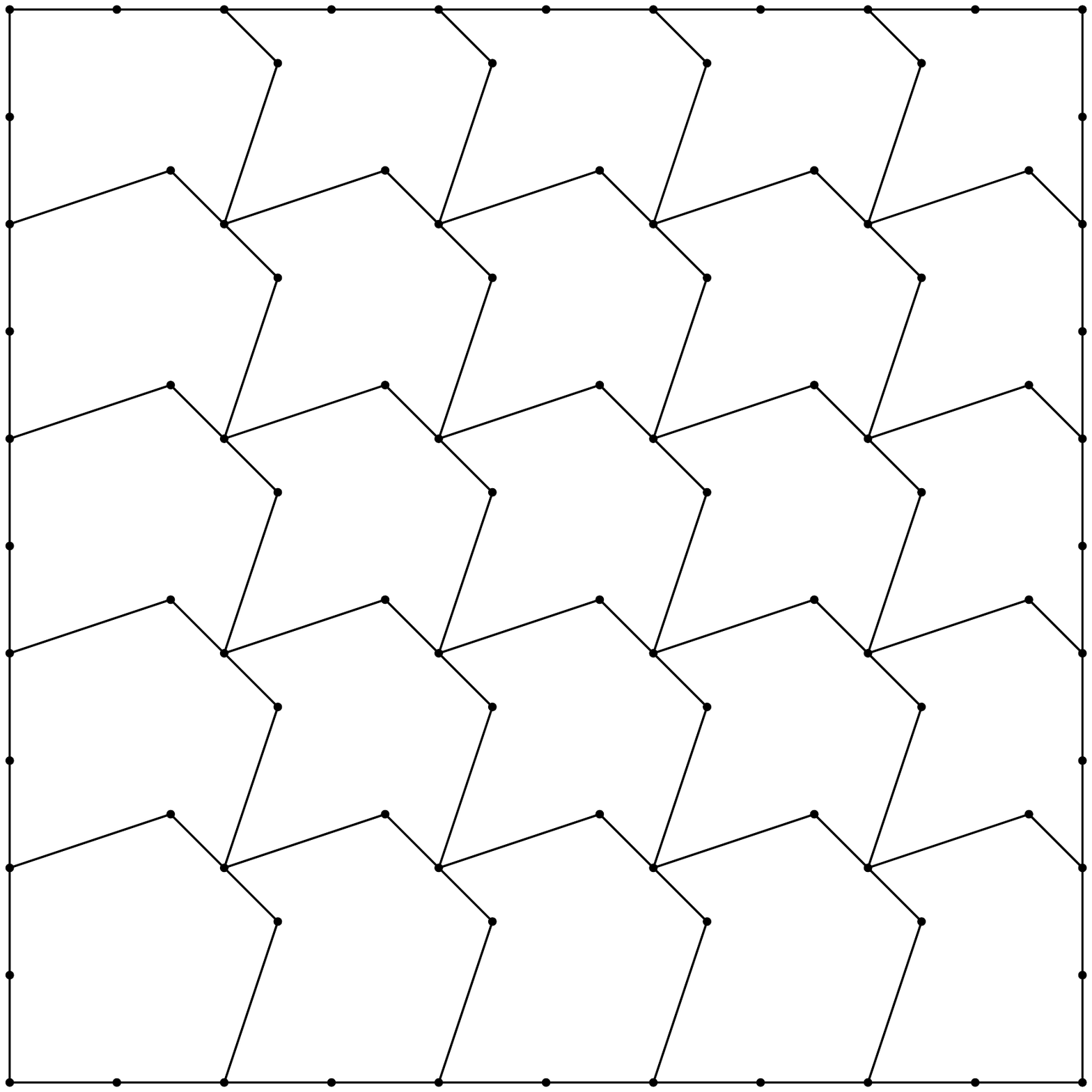}\\
    \includegraphics[width=0.32\textwidth]{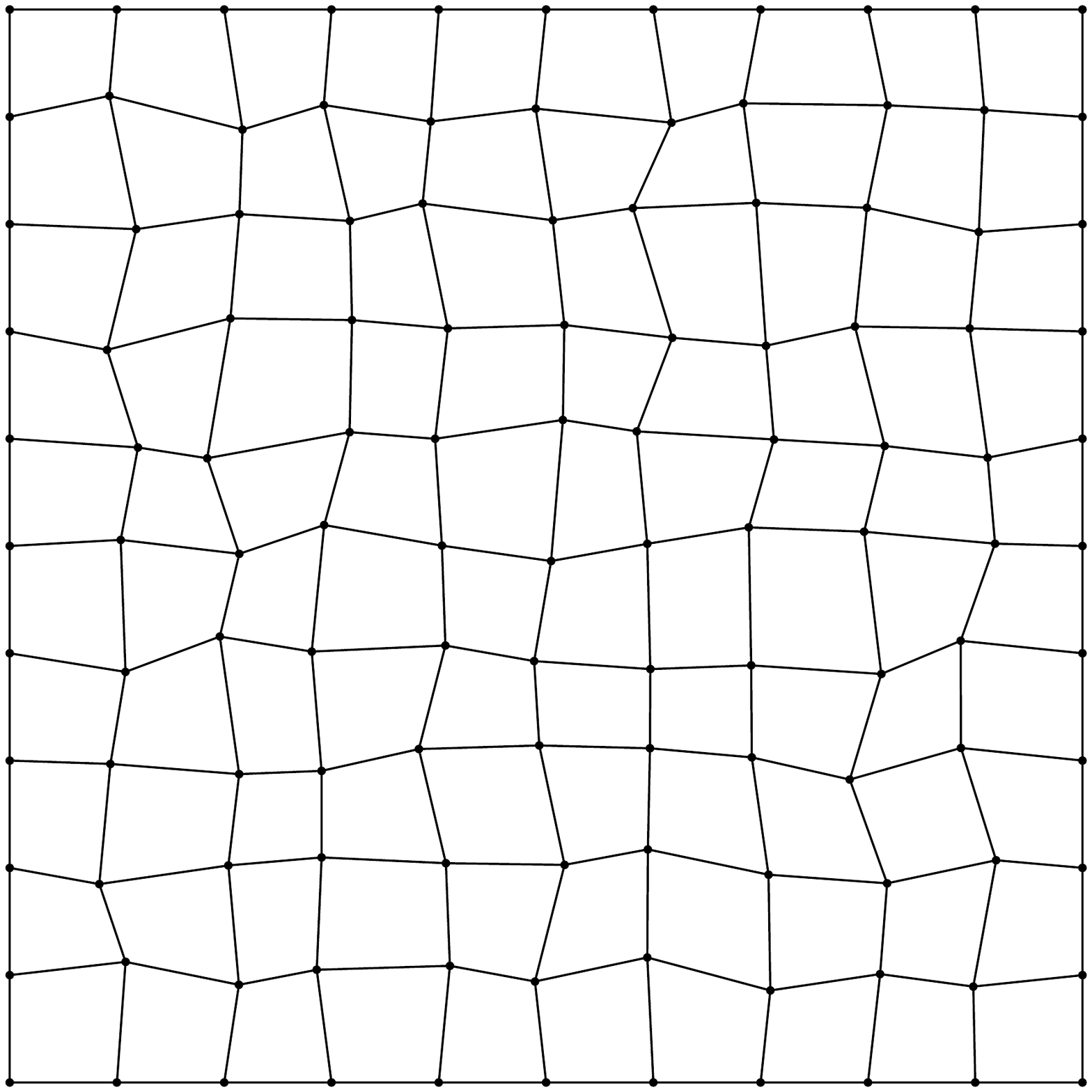}&\hspace{-0.25cm}
    \includegraphics[width=0.32\textwidth]{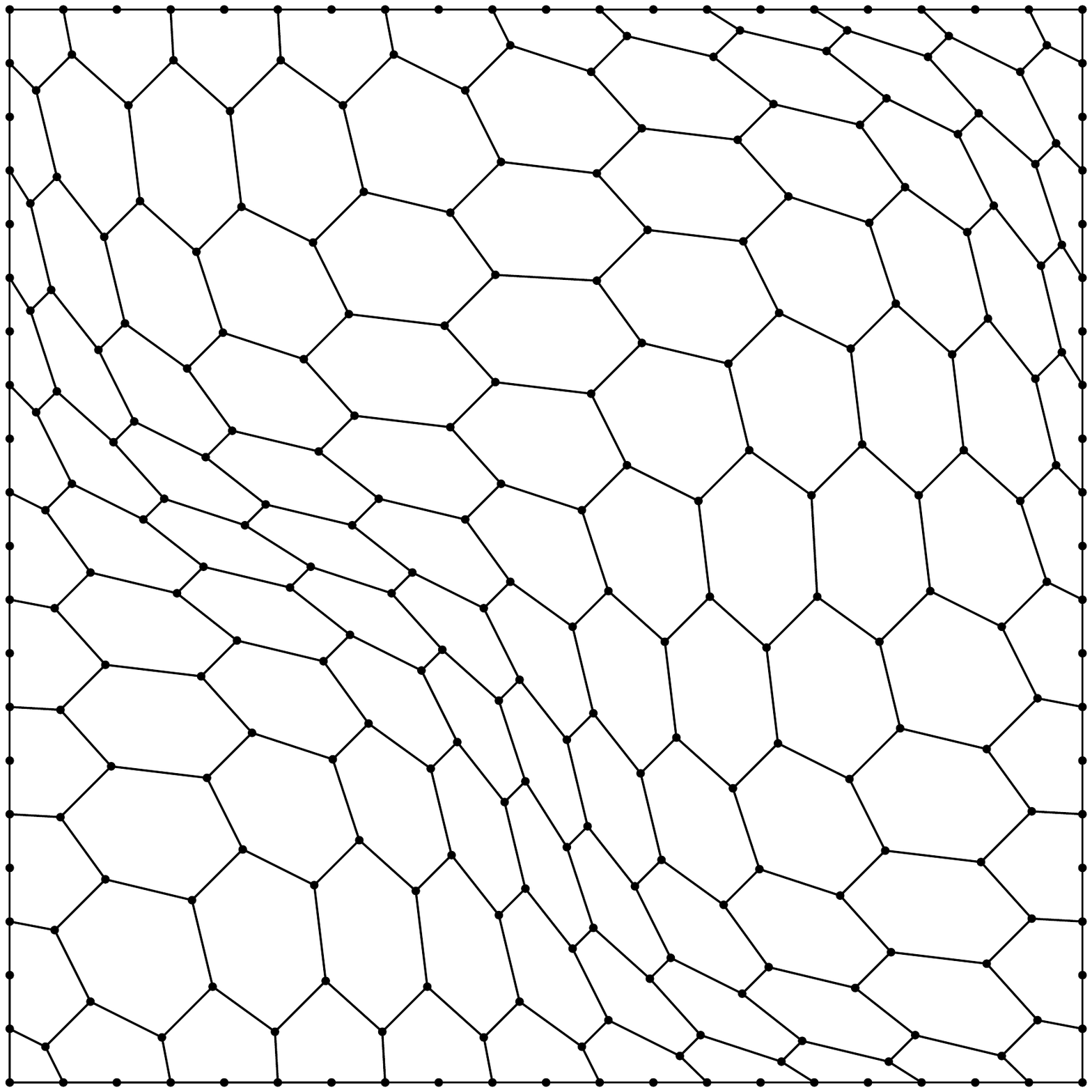}&\hspace{-0.25cm}
    \includegraphics[width=0.32\textwidth]{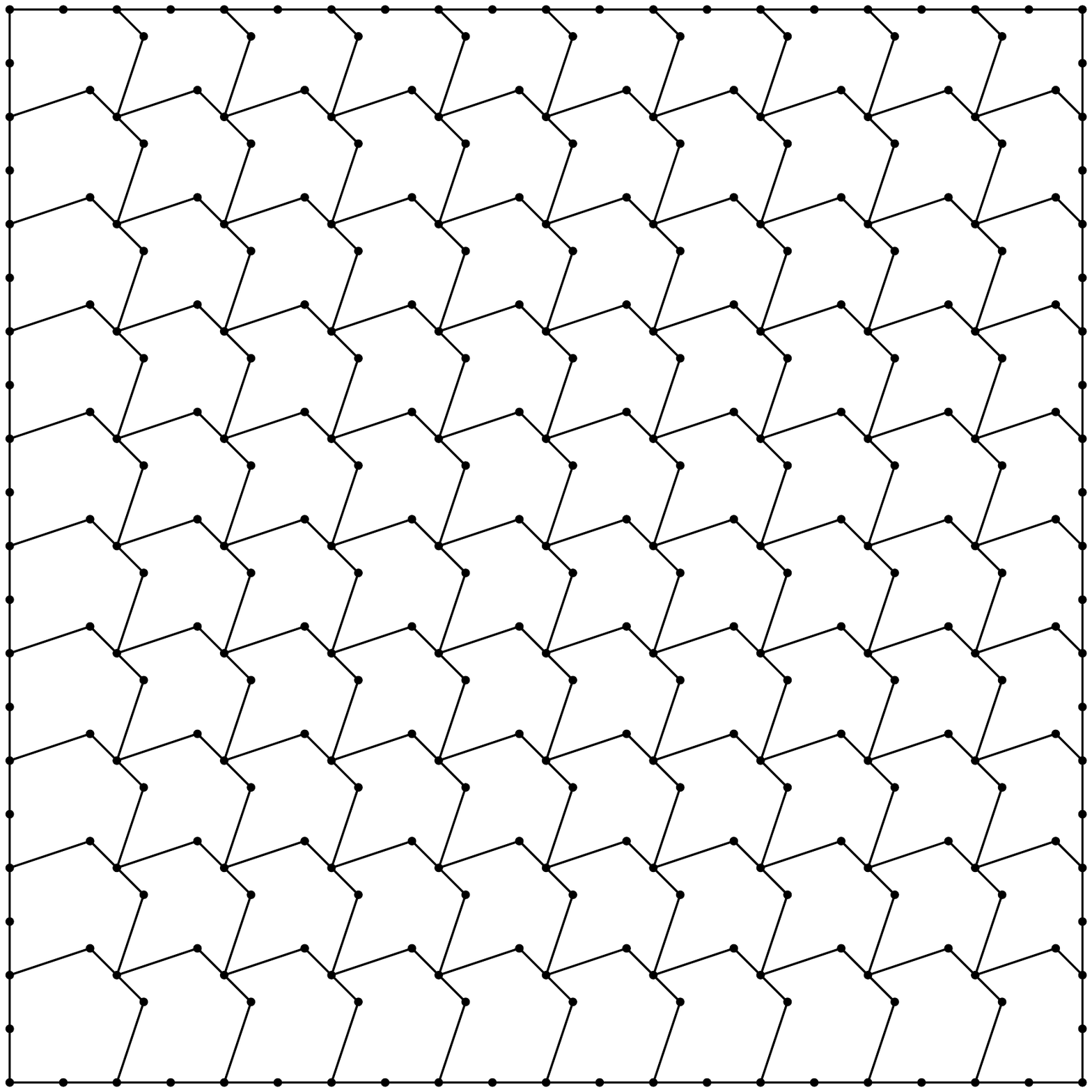}\\
  \end{tabular}
  \caption{First (top) and second (bottom) mesh of the three mesh families.}
  \label{fig:meshes}
\end{figure}
The meshes in \MeshONE{} are built by partitioning the domain $\Omega$
into square cells and relocating each interior node to a random
position inside a square box centered at that node.
The sides of this square box are aligned with the coordinate axis and
their length is equal to $0.8$ times the minimum distance between two
adjacent nodes of the initial square mesh.
The meshes in \MeshTWO{} are built as follows.
First, we determine a primal mesh by remapping the position
$(\HAT{x},\HAT{y})$ of the nodes of an uniform square partition of
$\Omega$ by the smooth coordinate transformation~\cite{Kuznetsov-Lipnikov-Shashkov:2004}:
\begin{align*}
  x &= \HAT{x} + (1\slash{10}) \sin(2\pi\HAT{x})\sin(2\pi\HAT{y}),\\
  y &= \HAT{y} + (1\slash{10}) \sin(2\pi\HAT{x})\sin(2\pi\HAT{y}).
\end{align*}
The corresponding mesh of \MeshTWO{} is built from the primal mesh by
splitting each quadrilateral cell into two triangles and connecting
the barycenters of adjacent triangular cells by a straight segment.
The mesh construction is completed at the boundary by connecting the
barycenters of the triangular cells close to the boundary to the
midpoints of the boundary edges and these latters to the boundary
vertices of the primal mesh.
The meshes in \MeshTHREE{} are obtained by filling the unit square
with a suitably scaled non-convex octagonal reference cell.

All the meshes are parametrised by the number of partitions in each
direction.
The starting mesh of every sequence is built from a $5\times 5$
regular grid, and the refined meshes are obtained by doubling this
resolution.
Mesh data for each refinement level, i.e., numbers of mesh elements,
number of edges, number of vertices, are reported in
Table~\ref{tab:mesh:data}.
\renewcommand{\TABROW}[5]{$#1$ & $#2$ & $#3$ & $#4$ & $#5$\\}
\newcommand{\ilev}{n}
\newcommand{\nE}{\mathcal{N}_{\E}}
\newcommand{\nF}{\mathcal{N}_{\s}}
\newcommand{\nv}{\mathcal{N}_{v}}

\begin{table}
  \begin{center}
    \begin{tabular}{|c|ccc|c|ccc|c|ccc|c|}
      \hline 
      &\multicolumn{4}{|c|}{Randomised quadrilaterals}
&\multicolumn{4}{|c|}{Remapped hexagons}
&\multicolumn{4}{|c|}{Non-convex octagons}\\
      \hline
      {$\ilev$}&{$\nE$}&{$\nF$}&{$\nv$}&{$h$}&{$\nE$}&{$\nF$}&{$\nv$}&{$h$}&{$\nE$}&{$\nF$}&{$\nv$}&{$h$}\\
      \hline
      {$1$}&{$  25$}&{$   60$}&{$  36$}&{$0.331$}
      &{$  36$}&{$  125$}&{$   90$}&{$0.328$}
	  &{$  25$}&{$  120$}&{$   96$}&{$0.291$}\\
      {$2$}&{$ 100$}&{$  220$}&{$ 121$}&{$0.186$}
	  &{$ 121$}&{$  400$}&{$  280$}&{$0.185$}
	  &{$ 100$}&{$  440$}&{$  341$}&{$0.146$}\\
      {$3$}&{$ 400$}&{$  840$}&{$ 441$}&{$0.094$}
	  &{$ 441$}&{$ 1400$}&{$  960$}&{$0.097$}
	  &{$ 400$}&{$ 1680$}&{$ 1281$}&{$0.073$}\\
      {$4$}&{$1600$}&{$ 3280$}&{$1681$}&{$0.047$}
	  &{$1681$}&{$ 5200$}&{$ 3520$}&{$0.049$}
	  &{$1600$}&{$ 6560$}&{$ 4961$}&{$0.036$}\\
      {$5$}&{$6400$}&{$12960$}&{$6561$}&{$0.024$}
      &{$6561$}&{$20000$}&{$13440$}&{$0.025$}
      &{$6400$}&{$25920$}&{$19521$}&{$0.018$}\\
      \hline
    \end{tabular}
  \end{center}
  \caption{Mesh data for the meshes in \MeshONE{}, \MeshTWO{}, and \MeshTHREE{};
    $\nE$, $\nF$ and $\nv$ are the numbers of mesh elements, interfaces
    and vertices, respectively, and $h$ is the mesh size parameter.}
  \label{tab:mesh:data}
\end{table}
Approximation errors are measured by comparing the polynomial
quantities $\Po{k}\uh$ and $\Po{k-1}\nabla\uh$, which are obtained
by a post-processing of the numerical solution, with the exact
solution $\u$ and solution's gradient $\nabla\u$.
The relative errors for the approximation of solution $\u$ and its
gradient in function of the mesh size $h$ are shown in the log-log plots of Fig.~\ref{fig:errors:mesh1}
for the mesh sequence \MeshONE{}, Fig.~\ref{fig:errors:mesh2} for the
mesh sequence \MeshTWO{}, and Fig.~\ref{fig:errors:mesh3} for the mesh
sequence \MeshTHREE{}. 
\begin{figure}[t]
  \centering
  \begin{tabular}{cc}
      \begin{overpic}[scale=0.375]{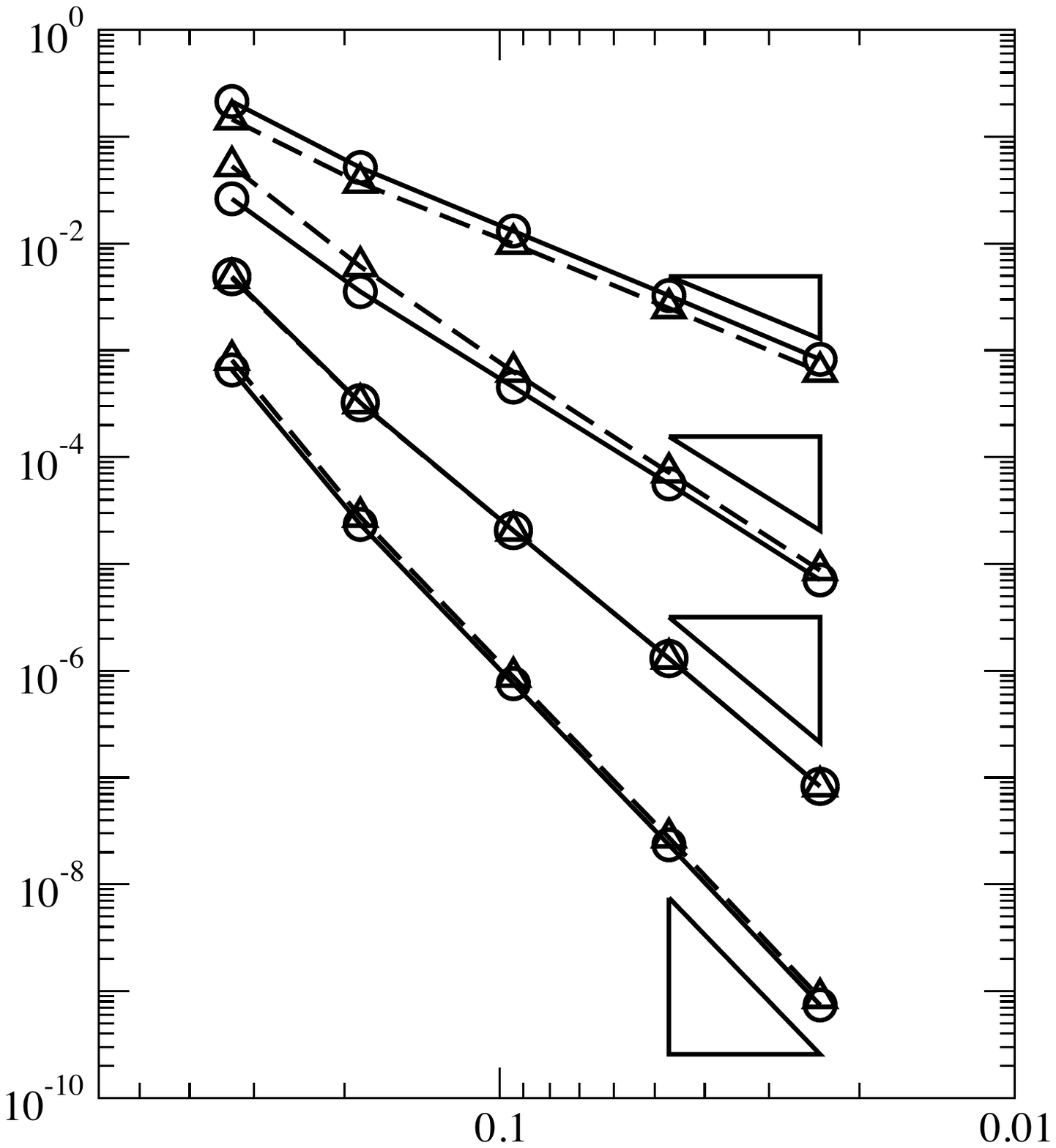} 
        \put(67,69) {$\mathbf{2}$}
        \put(67,55) {$\mathbf{3}$}
        \put(67,40) {$\mathbf{4}$}
        \put(49,15) {$\mathbf{5}$}
        \put(28,0){\textbf{Mesh size $h$}}
        \put(-4,18){\begin{sideways}\textbf{$\LTWO$ Approximation errors}\end{sideways}}
      \end{overpic}
      & 
      \begin{overpic}[scale=0.375]{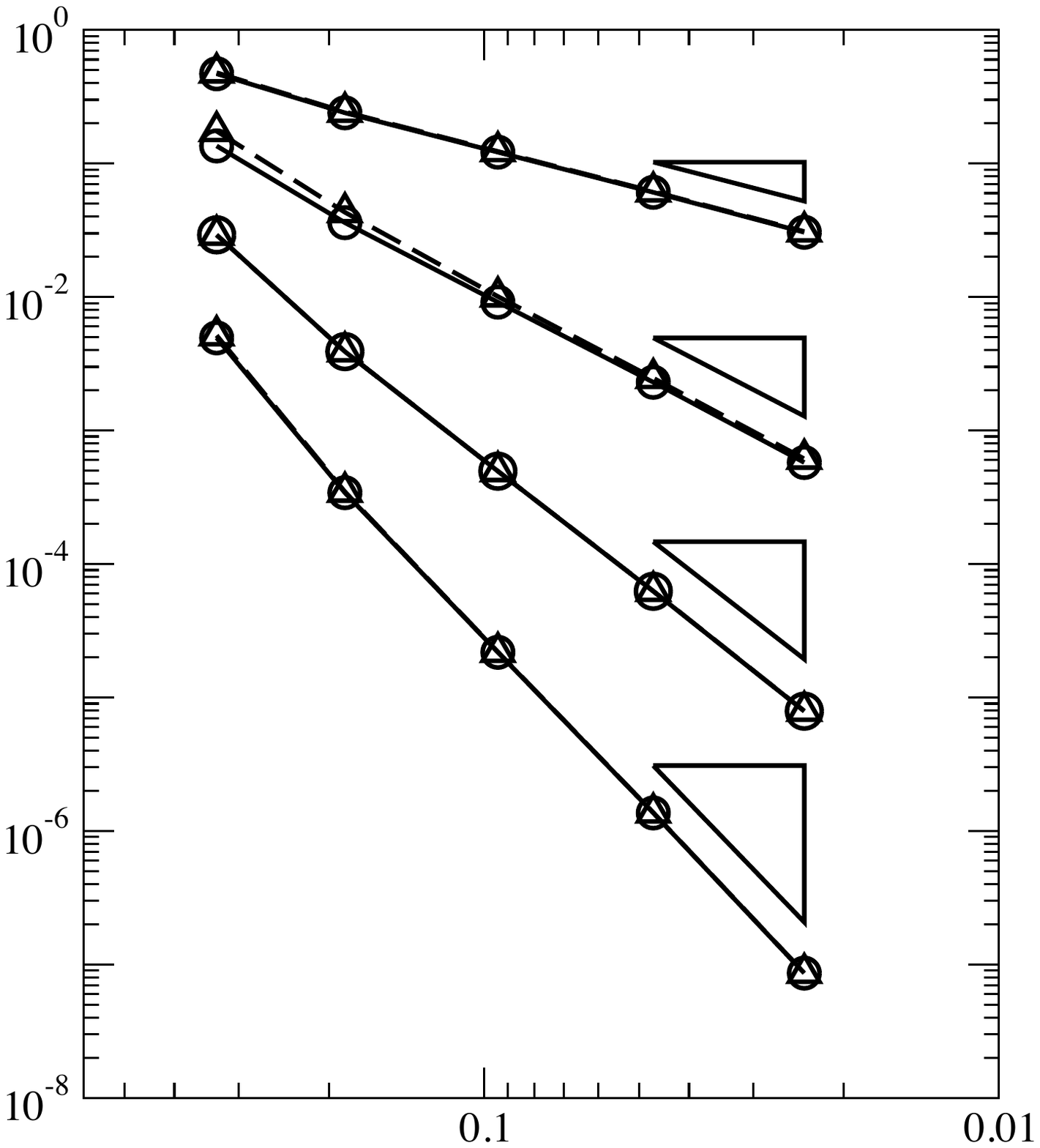} 
        \put(67,80) {$\mathbf{1}$}
        \put(67,64) {$\mathbf{2}$}
        \put(67,47) {$\mathbf{3}$}
        \put(67,28) {$\mathbf{4}$}
        \put(28,0){\textbf{Mesh size $h$}}
        \put(-4,18){\begin{sideways}\textbf{$\HONE$ Approximation errors}\end{sideways}}
      \end{overpic}\\
  \end{tabular}
  \caption{
    Error curves for the conforming VEM (circles) and the non-conforming (triangles) 
    applied to the mesh family of 
    randomised quadrilaterals with $\k=1,2,3$, and $4$.
    The left panels show the relative $\LTWO$ error; 
    the right panels show the relative $\HONE$ errors.
    The expected slopes are indicated by triangles.}
  \label{fig:errors:mesh1}
\end{figure}
\begin{figure}[t]
  \centering
  \begin{tabular}{cc}
      \begin{overpic}[scale=0.375]{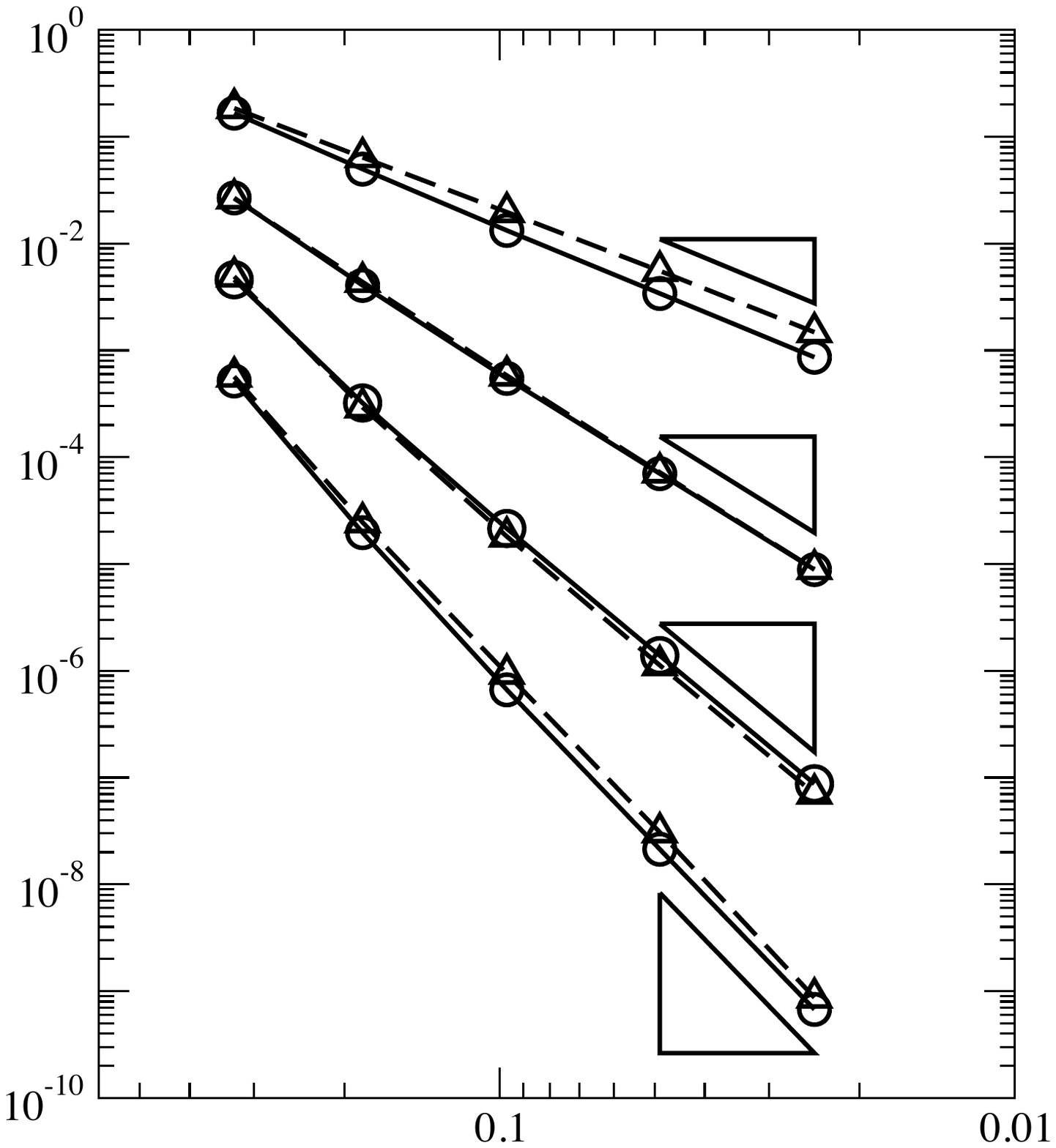} 
        \put(67,72) {$\mathbf{2}$}
        \put(67,55) {$\mathbf{3}$}
        \put(67,40) {$\mathbf{4}$}
        \put(48,15) {$\mathbf{5}$}
        \put(28,0){\textbf{Mesh size $h$}}
        \put(-4,18){\begin{sideways}\textbf{$\LTWO$ Approximation errors}\end{sideways}}
      \end{overpic}
      & 
      \begin{overpic}[scale=0.375]{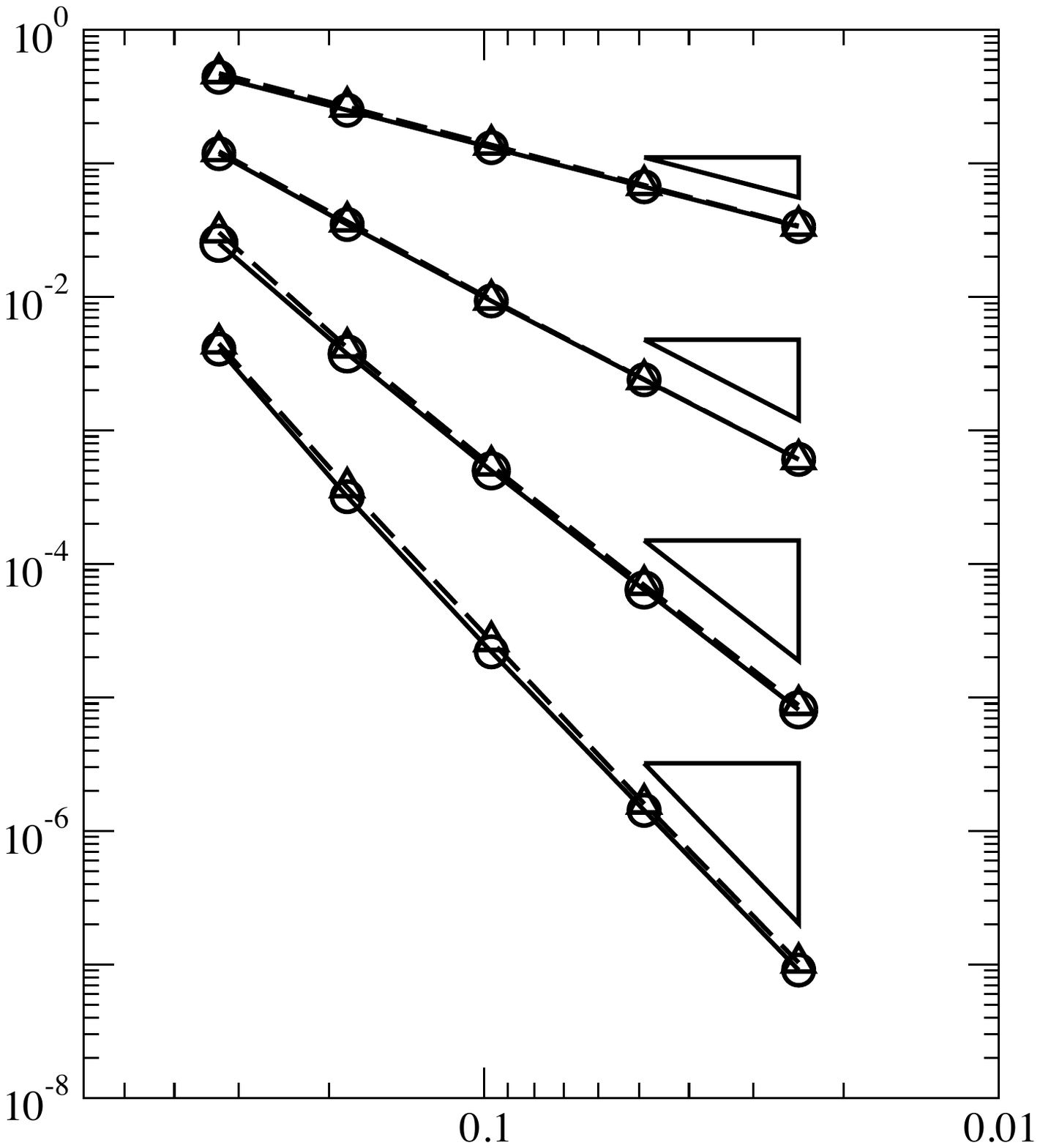} 
        \put(67,80) {$\mathbf{1}$}
        \put(67,63) {$\mathbf{2}$}
        \put(67,46) {$\mathbf{3}$}
        \put(67,27) {$\mathbf{4}$}
        \put(28,0){\textbf{Mesh size $h$}}
        \put(-4,18){\begin{sideways}\textbf{$\HONE$ Approximation errors}\end{sideways}}
      \end{overpic}\\
  \end{tabular}
  \caption{
    Error curves for the conforming VEM (circles) and the non-conforming (triangles)
    applied to the mesh family of mainly hexagonal cells with $\k=1,2,3$, and $4$.
    The left panels show the relative $\LTWO$ error; 
    the right panels show the relative $\HONE$ errors.
    The expected slopes are indicated by triangles.}
  \label{fig:errors:mesh2}
\end{figure}
\begin{figure}[t]
  \centering
  \begin{tabular}{cc}
      \begin{overpic}[scale=0.375]{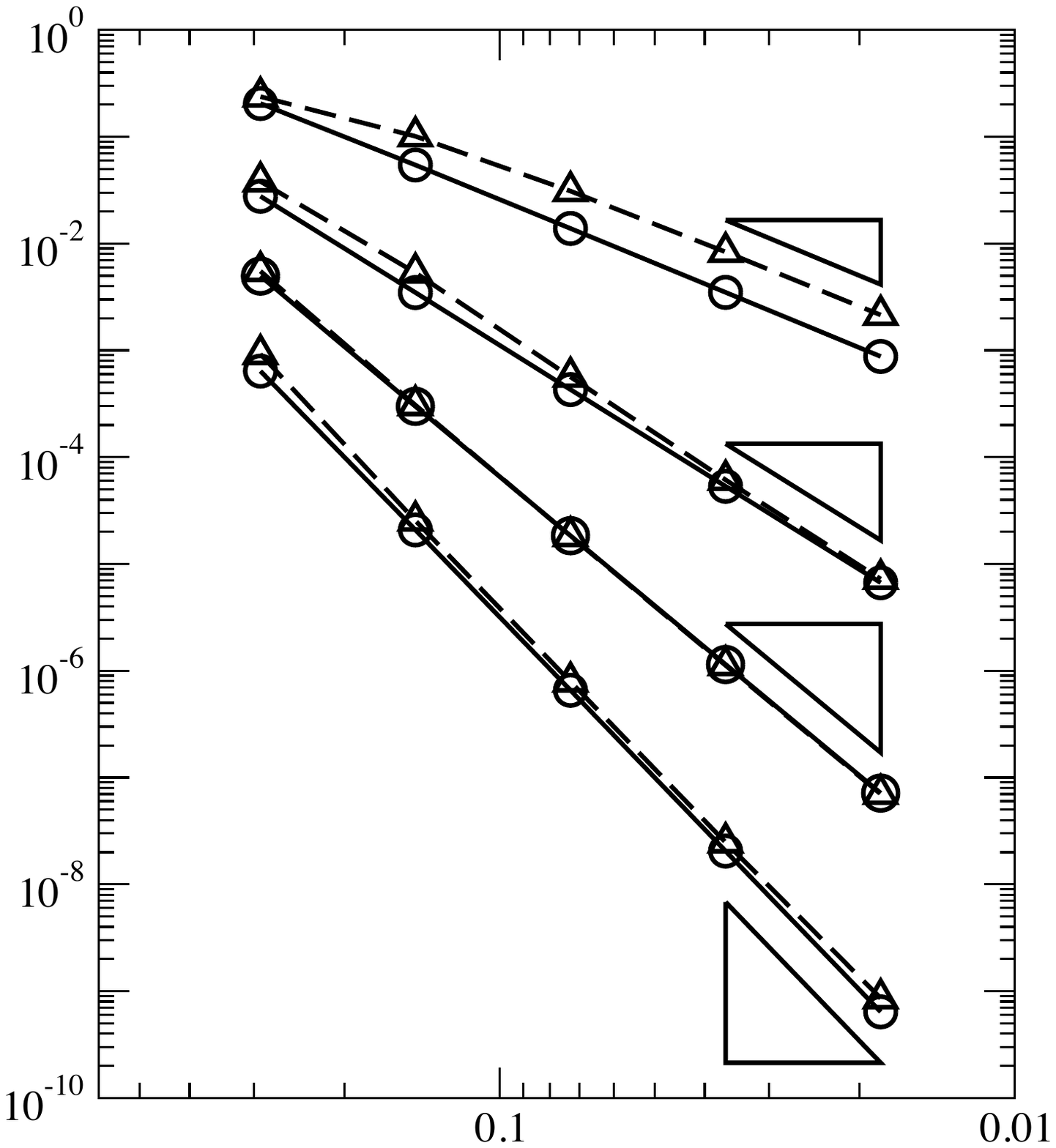}
        \put(72,74) {$\mathbf{2}$}
        \put(72,55) {$\mathbf{3}$}
        \put(72,40) {$\mathbf{4}$}
        \put(54,16) {$\mathbf{5}$}
        \put(28,0){\textbf{Mesh size $h$}}
        \put(-4,18){\begin{sideways}\textbf{$\LTWO$ Approximation errors}\end{sideways}}
      \end{overpic}
      & 
      \begin{overpic}[scale=0.375]{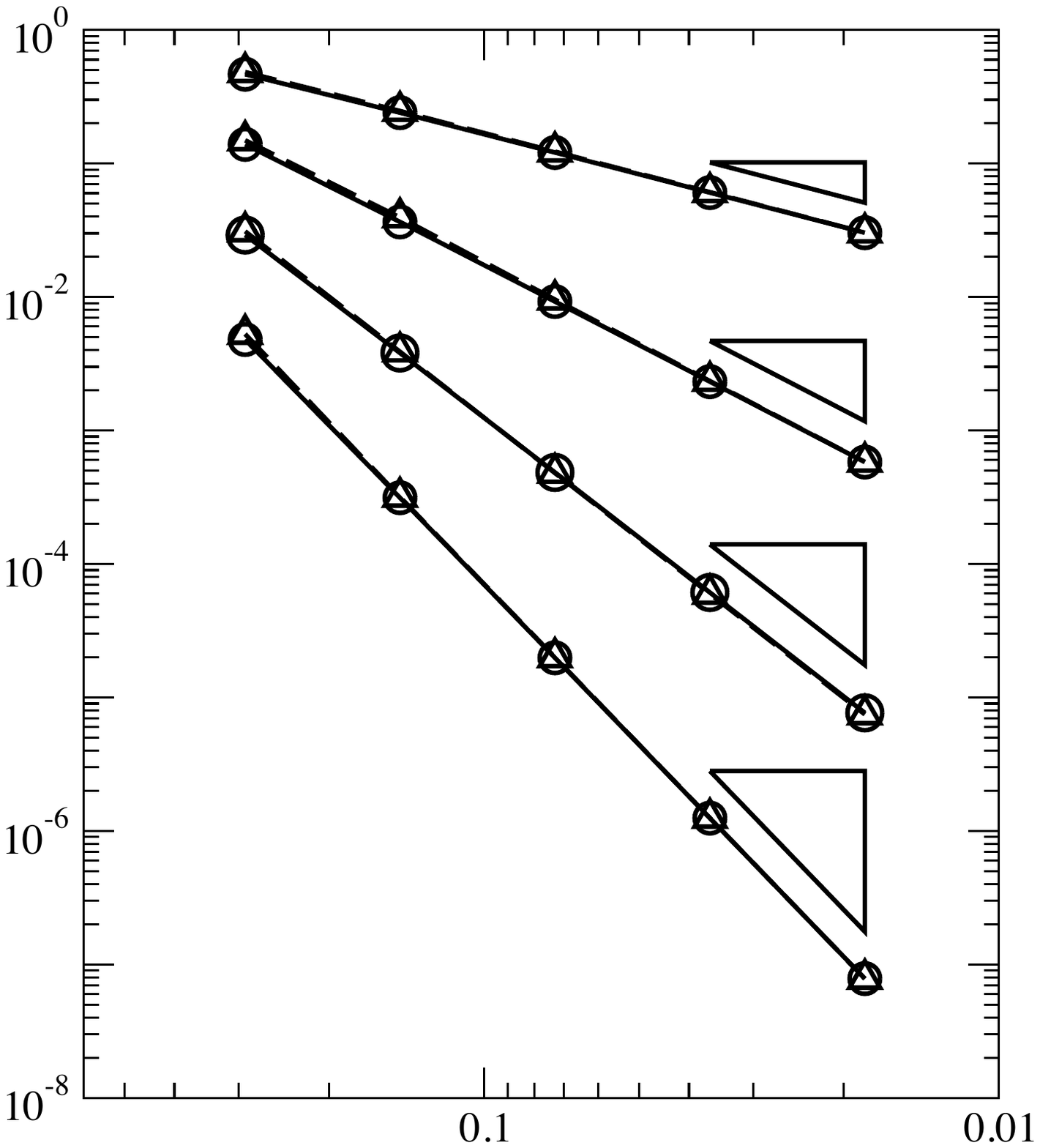} 
        \put(72,79) {$\mathbf{1}$}
        \put(72,64) {$\mathbf{2}$}
        \put(72,46) {$\mathbf{3}$}
        \put(72,27) {$\mathbf{4}$}
        \put(28,0){\textbf{Mesh size $h$}}
        \put(-4,18){\begin{sideways}\textbf{$\HONE$ Approximation errors}\end{sideways}}
      \end{overpic}      
  \end{tabular}
  \caption{
    Error curves for the conforming VEM (circles) and the non-conforming (triangles)
    applied to the mesh family of regular non-convex octagons with $\k=1,2,3$, and $4$.
    The left panels show the relative $\LTWO$ error; 
    the right panels show the relative $\HONE$ errors.
    The expected slopes are indicated by triangles.}
  \label{fig:errors:mesh3}
\end{figure}
The values of the measured error are labeled by a circle for the
conforming VEM and by a square for the nonconforming VEM.
The plots on the left show the relative errors for the approximation
of the solution, while the plots on the right show the relative errors
for the approximation of the solution's gradient.
The expected slopes are shown for each error curve directly on the
plots.
The numerical results confirm the theoretical
rate of convergence. The conforming and nonconforming VEMs provide very close results on any fixed mesh, with the conforming method  slightly over performing the nonconforming VEM in few cases.
Similar results (not shown) are observed when comparing the two methods with respect to the respective number of degrees of freedom. Indeed, for each mesh shown, the difference on the number of degrees of freedom does \emph{not} depend on the polynomial degree $k$ and is about equal to the number of elements, in favour of the conforming VEM.

\section{Conclusion}
\label{sec:conclusion}

We have introduced a unified abstract framework for the Virtual
Element Method, through which conforming and nonconforming VEMs for
solving general second order elliptic convection-reaction-diffusion
problems with non-constant coefficients in two and three dimensions
are defined, analysed, and implemented in a largely identical manner.
We have shown that both methods produce solutions which converge to
the true solution at the optimal rate in the $H^1$- and $\LTWO$-norms,
supported by numerical experiments on a variety of different mesh
topologies including non-convex polygonal elements.

The framework is based on assuming that the $L^2$-projector onto the
polynomial subspace of the virtual element space is computable, in
this respect following the approach
of~\cite{EquivalentProjectors,General}.  By generalising the process
considered in~\cite{EquivalentProjectors}, we have
introduced families of new possible conforming
and nonconforming virtual element spaces in which the
$\LTWO$-projection 
is indeed exactly computable directly from the degrees of
freedom used to describe the space.  From this family we have detailed
a particular space for which the implementation of the $\LTWO$-projection takes a simple form independent of the method, the polynomial degree, and the space dimension.
It also becomes apparent that since the accurate approximation of the problem's data
is only needed to evaluate the polynomial consistency part of the 
bilinear form, the variational crime theory classical of finite element 
methods applies to the virtual element setting. 
Extensions of the present framework to include stabilisation techniques 
for convection-dominated diffusion problems and the design of virtual
element methods for Stokes problems will be considered in future
works.

\vskip3mm
\section*{Acknowledgements}

AC was partially supported by the EPSRC (Grant EP/L022745/1).
GM was partially supported by the Laboratory Directed Research and 
Development program (LDRD), U.S. Department of Energy Office of Science, 
Office of Fusion Energy Sciences, under the auspices of the National 
Nuclear Security Administration of the U.S. Department of Energy by 
Los Alamos National Laboratory, operated by Los Alamos National Security 
LLC under contract DE-AC52-06NA25396.
OS was supported by a Ph.D. Studentship from the College of Science and Engineering at the University of Leicester and an EPSRC Doctoral Training Grant.
All this support is gratefully acknowledged.

\bibliographystyle{acm}
\bibliography{references}

\end{document}